\documentclass[english, final, 11pt]{article}

\usepackage[utf8]{inputenc} 
\usepackage[T1]{fontenc} 
\usepackage{amssymb,amsthm,amsmath} 
\usepackage{babel} 
\usepackage{svn} 
\usepackage{ifdraft} 
\usepackage[notref,notcite]{showkeys} 
\usepackage{graphicx} 
\usepackage[usenames]{color}

\usepackage[unicode=true, bookmarks=false, breaklinks=false,pdfborder={0 0 1},backref=false,colorlinks=false]{hyperref}

\newcommand{\ev}[1]{\mathbb{E}{#1}}

\newcommand{\evx}[1]{\mathbb{E}_x{#1}}

\newcommand{\pr}[1]{\mathbb{P}\rbr{#1}}

\newcommand{\norm}[3]{\Vert #1 \Vert_{ #2 } ^{ #3 }} 
\newcommand{\rbr}[1]{\left( #1 \right)} 
\newcommand{\sbr}[1]{\left[ #1 \right]} 
\newcommand{\cbr}[1]{\left\{ #1 \right\}}

\newcommand{\ddp}[2]{\left\langle #1, #2 \right\rangle} 
\newcommand{\intr}{\int_{\mathbb{R}^d}} 
\newcommand{\inti}{\int_{0}^{+\infty}} 
\newcommand{\intc}[1]{\int_{0}^{#1}}

\newcommand{\Rd}{\mathbb{R}^d} 
 
\newcommand{\R}{\mathbb{R}} 
\newcommand{\T}[1]{\mathcal{T}_{#1}} 
 \DeclareMathOperator{\var}{Var} \DeclareMathOperator{\cov}{Cov} \DeclareMathOperator{\grad}{grad}

\definecolor{czerwony}{rgb}{1,0.3,0.3} 
\definecolor{zielony}{rgb}{0.1,0.8,0.4} 
\definecolor{blue}{rgb}{0.1,0.1,0.9}


\newcommand{\dd}[1]{\textnormal{d}#1}

\newcommand{\SP}{\mathcal{S}'(\Rd)} 
\newcommand{\SD}{\mathcal{S}(\Rd)}

\newcommand{\ceq}{\eqsim} 
\newcommand{\cleq}{\lesssim}

\newtheorem{theorem}{Theorem}[section] 
\newtheorem{lemma}[theorem]{Lemma} 
\newtheorem{fact}[theorem]{Fact}

 \theoremstyle{remark} 
\newtheorem{rem}[theorem]{Remark}

\usepackage[left=2cm,top=2cm,right=2cm,,bottom=3cm,nohead]{geometry}

\newcommand{\eq}{\varphi} 
 
  \DeclareMathOperator{\Lip}{Lip} 
 
\newcommand{\pspace}[1]{\mathcal{P}^{#1}}

\title{CLT for Ornstein-Uhlenbeck branching particle system} 
\author{\textbf{Radosław Adamczak\footnote{radamcz@mimuw.edu.pl. Research was partially supported by the MNiSW grant N N201 397437 and by the Foundation for Polish Science.} \;and Piotr Miłoś\footnote{pmilos@mimuw.edu.pl (corresponding author). Research was partially supported by the MNiSW grant N N201 397537.}}\\Faculty of Mathematics, Informatics and Mechanics, University of Warsaw\\ul. Banacha 2, Warsaw, Poland} \begin{document}

\maketitle 
\begin{abstract}
	In this paper we consider a branching particle system consisting of particles moving according to the Ornstein-Uhlenbeck process in $\Rd$ and undergoing a binary, supercritical branching with a constant rate $\lambda>0$. This system is known to fulfil a law of large numbers (under exponential scaling). In the paper we prove the corresponding central limit theorem. The limit and the CLT normalisation fall into three qualitatively different classes. In, what we call, the small branching rate case the situation resembles the classical one. The weak limit is Gaussian and normalisation is the square root of the size of the system. In the critical case the limit is still Gaussian, however the normalisation requires an additional term. Finally, when branching has large rate the situation is completely different. The limit is no longer Gaussian, the normalisation is substantially larger than the classical one and the convergence holds in probability. \\
	We prove also that the spatial fluctuations are asymptotically independent of the fluctuations of the total number of particles (which is a Galton-Watson process). ~\\
	~\\
	MSC: primary 60F05; 60J80 secondary 60G20 \\
	Keywords: Supercritical branching particle systems, Central limit theorem, Galton-Watson process.
\end{abstract}
\section{Introduction} 

\label{sec:introduction} We consider a branching particle system $\cbr{X_t}_{t\geq 0}$ as follows. The system starts off at time $t=0$ from a single particle located at $x\in \Rd$. The particle moves according to the Ornstein-Uhlenbeck process in $\Rd$ and branches after exponential time with parameter $\lambda>0$. The branching is supercritical and given by the generating function
\[ F(s) := p s^2 + (1-p), \quad p>\frac{1}{2}. \]
The offspring particles follow the same dynamics. This system will be referred to as the OU branching process. Formally, the system is identified with the empirical process, i.e. $X$ is a measure-valued process such that for a Borel set $A$, $X_t(A)$ is the (random) number of particles at time $t$ in $A$. We recall that the Ornstein-Uhlenbeck process is a time homogenous Markov process with the infinitesimal operator 
\begin{equation}
	L := \frac{1}{2} \sigma^2 \Delta - \mu x\circ \nabla. \label{eq:ouOperator} 
\end{equation}
where $\sigma,\mu>0$ and $\circ$ denotes the standard scalar product. The Ornstein-Uhlenbeck process has a unique equilibrium measure $\eq$ (to be described later).

Systems of this type may be regarded as consisting of two components, namely the genealogy part and diffusion part (for this reason they are sometimes called ``branching diffusions''). The genealogy part, being the celebrated Galton-Watson process is well-studied. In our paper, the expected number of the progeny of a particle is strictly greater than $1$, therefore the system is supercritical. The expected total number of particles grows exponentially at the rate 
\begin{equation}
	\lambda_p :=(2p-1)\lambda. \label{eq:growth-rate} 
\end{equation}
After a long time the positions of two ``randomly picked'' particles are ``almost independent'' random variables, which suggests the following law of large numbers 
\begin{equation}
	|X_t|^{-1} \ddp{X_t}{f} \rightarrow \ddp{\eq}{f}1_{Ext^c},\quad a.s. \label{eq:basicLLN} 
\end{equation}
where $Ext^c$ is the event that the system does not become extinct, $|X_t|$ denotes the number of particles at time $t$ and $f$ is a bounded, continuous function. This is indeed the case as follows from \cite{Harris:2008wd}. In Theorem \ref{thm:LLN} we obtain (\ref{eq:basicLLN}) for a slightly more general class of functions. This is, however, only a preparatory step for our main goal which is the corresponding central limit theorem. The second order behaviour depends qualitatively on the sign of $\lambda_p - 2\mu$. Roughly speaking this condition reflects the interplay of two antagonistic forces, the growth which is local and makes the system more coarse and the movement which tends to smooth the system (higher $\mu$ implies ``a stronger attraction of particles towards $0$''). Now we are going to describe the behaviour of the spatial fluctuations: 
\begin{equation}
	F_t^{-1}\rbr{\ddp{X_t}{f} - |X_t|\ddp{\eq}{f}}, \label{eq:spatialFluct} 
\end{equation}
where $F_t$ is some not necessarily deterministic norming. We will describe the situation on the set of non-extinction $Ext^c$. 
\begin{description}
	\item[Small branching rate: $\lambda_p<2\mu$.] Our main result is contained in Theorem \ref{thm:clt1}. In this the case ``the movement part prevails'' and the result resembles the standard CLT. The normalisation is given by $F_t = |X_t|^{1/2}$ (which is of order $e^{(\lambda_p/2) t}$). Moreover, we obtain the limit which is Gaussian (though its variance is given by a complicated formula) and does not depend on the starting position $x$. Let us also note that a random normalisation is quite natural as it ``filters'' out the fluctuation of the total number of particles. 
	\item[Critical branching rate: $\lambda_p=2\mu$.] Our main result is contained in Theorem \ref{thm:cltCritical}. In this case ``the branching prevails''. The behaviour of the fluctuations slightly diverge from the classical setting. The normalisation is bigger: $F_t = t^{1/2}|X_t|^{1/2}$. The limit still does not depend on the starting condition and is Gaussian but its variance depends on the derivatives of $f$. To explain this we notice that the branching is so fast that the fluctuations are are not smoothen by the motion and become essentially local. In consequence they give rise to a spatial white noise and larger normalisation is required. 
	\item[Large branching rate: $\lambda_p>2\mu$.] Our main result is contained in Theorem \ref{thm:clt2}. In this case not only does the branching ``prevail'' but also ``the motion badly fails to make any smoothing''. The normalisation is even bigger: $F_t = e^{(\lambda_p-\mu) t}$ and we have $\lambda_p-\mu>\lambda_p/2$. The limit is no longer Gaussian, it is given by $\ddp{ f}{\grad \varphi}\circ J$ (where $J$ is the limit of a certain martingale). What is perhaps surprising, the limit holds in probability. The first term, $\ddp{f}{\grad \varphi}$, means that alike the critical situation the branching is fast enough to produce some sort of a white noise. Even more, it is so fast that the limit depends on the starting condition and in fact, up to some extent, the system ``remembers its whole evolution'', which is encoded in $J$. 
\end{description}
In either case we prove also that the spatial fluctuations become independent of fluctuations of the total number of particles as time increases.

To our best knowledge so far there have been no CLT-type results of a similar flavour in the field of branching diffusions and the research effort was concentrated instead on proving laws of large numbers for more and more general branching systems. The only CLT result we are aware of is contained in \cite[Proposition 6.4]{Bansaye:2009cl}. Their setting is somewhat different as they consider ``running fluctuations'' of the form $F_{t,T}^{-1} \rbr{ \ddp{X_{t+T}}{f} - \ddp{X_{T}}{f_{t,T}}}$ as $T\rightarrow +\infty$, where $F_{T,t}$ is a normalisation and $f_{T,t}$ is a certain transformation of $f$. Besides many advantages their approach fails to capture the emergence of three qualitatively different cases described above. More detailed discussion is contained in Remark \ref{rem:comparison}. It is also noteworthy that our results open a possibility of further research. We list a few most promising possibilities in Remark \ref{rem:others1} and Remark \ref{rem:others2}.

At this moment we would like to announce our parallel paper \cite{Adamczak:2011fk}. In \cite{Adamczak:2011fk} we consider the $U$-statistics of the OU branching system, namely expressions of the form 
\begin{equation}
	U^n_t(f) := \sum_{ \substack{i_1,i_2, \ldots i_n=1 \\
	i_k \neq i_j\text{ if } k\neq j}}^{|X_t|} f(X_t(i_1), X_t(i_2), \ldots, X_t(i_n)). 
\end{equation}
We obtain both the law of large numbers (i.e. an analogue of \eqref{eq:basicLLN}) and CLTs, which also fall into three categories corresponding to the cases described above. Moreover, at this point we also advertise a forthcoming work of the second named author \cite{Mios:2011fk}, which is devoted to studies of the CLT for superprocesses based on the Ornstein-Uhlenbeck process. Qualitatively the results of \cite{Mios:2011fk} are the the same as the ones presented in this paper.

Our proofs utilise a mixture of techniques used for the branching particle systems (e.g. the Laplace transform and the log-Laplace equation, coupling and decoupling). Although the proof schemes loosely resemble known techniques (e.g. are similar to proofs in \cite[Section 1.13]{Athreya:2004xr}) they required many improvements. In our proofs we used also the fact that the Ornstein-Uhlenbeck process has a particularly explicit and traceable structure. Improving this part seems to be an interesting research problem.\\
The studies of branching models of various types have a long history, we refer the reader to \cite{Dynkin:1994aa,Etheridge:2000fe,Athreya:2004xr,Dawson:1993lr} (the list is by no means exhaustive). It has been known for a long time that the behaviour of branching systems differs qualitatively for (sub)critical and supercritical cases. The former become extinct almost surely, their limit properties are studied after conditioning on non-extinction event (e.g. Yoglom type theorems) or as a part of larger infinite structures (e.g. Galton-Watson forests, random snakes, continuum trees e.g. \cite{Gall:1999pi}). The latter grow exponentially fast (on the set of non-extinction), which makes them possible to be studied using laws of large numbers, starting with the celebrated Kesten-Stigum theorem (\cite[Theorem I.10.1]{Athreya:2004xr}). Such theorems were also proved for branching particle systems, they go back to \cite{Asmussen:1976ph,Asmussen:1976pi} and more recently \cite{Englander:2007im} (which was the main inspiration for our paper). It presents a law of large number for a large class of supercritical branching diffusions and admits unbounded space-dependent branching intensity. Also we would like to mention again paper \cite{Bansaye:2009cl} in which systems with non-local branching (i.e. particles may jump upon an event of branching) were studied. The article \cite{Bansaye:2009cl} presents a law of large numbers as well as a central limit theorem, though in a different spirit than ours (see comparison in Remark \ref{rem:comparison}). Due to its excessive introduction \cite{Bansaye:2009cl} is also an excellent resource of biological motivations for study of branching diffusions.\\
The article is organised as follows. The next section presents notation and basic facts required further. Section \ref{sec:results} is devoted to presentation of results. Proofs are deferred to Section \ref{sec:proofs} and the Appendix.

\section{Definitions and notation } 

\subsection{Notation} \label{sec:definitions_and_notation} For a branching system $\cbr{X_t}_{t\geq 0}$, we denote by $|X_t|$ the number of particles at time $t$, and by $X_t(i)$ - the position of the $i$-th (in a certain ordering) particle at time $t$. Typically, we use $\evx{}$ or $\mathbb{P}_x$ to stress the fact that we calculate the expectation for the system starting from a particle located at $x$. Sometimes we use also $\ev{}$ and $\mathbb{P}$ when this location is not relevant (e.g. if we calculate the number of particles in the system). We will refer to system starting from a single particle at time $t=0$ located at $x\in \Rd$ shortly, as the\:OU\:branching system starting from $x\in \Rd$.\\
For a function $f \colon \R^d \to \R$, we will denote 
\begin{displaymath}
	\langle X_t, f \rangle = \sum_{i=1}^{|X_t|}f(X_t(i)). 
\end{displaymath}
By $\rightarrow^d $ we denote the convergence in law. We use $\cleq, \ceq$ to denote the situation when an equality or inequality holds with a constant $c>0$, which is irrelevant for calculations. E.g. $f(x)\ceq g(x)$ means that there exists a constant $c>0$ such that $f(x) = c g(x)$.\\
Let $x \circ y = \sum_{i=1}^d x_i y_i$ denote the standard scalar product of $x,y\in \Rd$. Moreover, $\norm{x}{}{} = \sqrt{x\circ x}$ is the standard Euclidean norm in $\R^d$.

We use also $\ddp{f}{\mu}:=\int_{\Rd} f(x) \mu{(\dd{x}) }$. 

In the paper we will use the space
\[ \pspace{}=\pspace{}(\R^{d}):=\cbr{f:\R^{d}\mapsto\R:f\,\text{ is continuous and } \exists_{n}\text{ such that }|f(x)|/\norm x{}n\rightarrow 0\text{ as }\norm x{}{}\rightarrow+\infty},
\]
that is the space of continuous functions which grow at most polynomially. \\
Given a function $f\in \pspace{}(\R^d)$ we will implicitly understand its derivatives (e.g. $\frac{
\partial f}{
\partial x_i}$) in the space of the tempered distribution (see e.g. \cite[p. 173]{Rudin:1973fk}).

\paragraph{Basic facts on the Galton-Watson process} The number of particles $\cbr{|X_t|}_{t\geq 0}$ is the celebrated Galton-Watson process. We present basic properties of this process used in the paper. The main reference in this section is \cite{Athreya:2004xr}. We already introduced the growth rate \eqref{eq:growth-rate} (e.g. \cite[Section 1.6]{Athreya:2004xr}). The process becomes extinct with the probability $p_e := \frac{1-p}{p}.$ (see \cite[Theorem I.5.1]{Athreya:2004xr}). We will denote the extinction and non-extinction events by $Ext$ and $Ext^c$ respectively. The process $$V_t := e^{-\lambda_p t} |X_t|$$ is a positive martingale. Therefore it converges (see also \cite[Theorem 1.6.1]{Athreya:2004xr}) 
\begin{equation}
	V_t \rightarrow V_\infty, \quad a.s. \:\:\text{ as }t\rightarrow +\infty. \label{eq:defV} 
\end{equation}
\begin{fact}
	\label{fact:aaa} We have $\cbr{V_\infty = 0} = Ext$ and conditioned on non-extinction $V_\infty$ has the exponential distribution with parameter $\frac{2p-1}{p}$. We have $\ev{}(V_\infty) =1$ and $ \var(V_\infty) = \frac{1}{2p-1}$. $\ev{}e^{-4\lambda_p t} |X_t|^4$ is uniformly bounded, i.e. there exists $C>0$ such that for any $t\geq 0$ we have $\evx{}e^{-4\lambda_p t}|X_t|^4 \leq C$. Moreover, all moments are finite, i.e. for any $n\in \mathbb{N}$ and $t\geq 0$ we have $\ev{} |X_t|^n < +\infty$. 
\end{fact}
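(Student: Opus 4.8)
The plan is to reduce every assertion to the probability generating function of the Galton--Watson process $\cbr{|X_t|}_{t\ge0}$ and to solve the associated Kolmogorov equation explicitly. Write $F_t(s):=\evx{s^{|X_t|}}$, which does not depend on $x$ since the genealogy ignores the spatial motion. As particles branch at rate $\lambda$ and are then replaced by offspring with generating function $F(s)=ps^2+(1-p)$, the function $F_t$ solves the backward equation
\begin{equation*}
\partial_t F_t(s)=u\rbr{F_t(s)},\qquad F_0(s)=s,\qquad u(s):=\lambda\rbr{F(s)-s}=\lambda p\,(s-1)\rbr{s-\tfrac{1-p}{p}},
\end{equation*}
and separation of variables gives the (unique) solution in closed form,
\begin{equation*}
\frac{F_t(s)-1}{F_t(s)-\tfrac{1-p}{p}}=e^{\lambda_p t}\,\frac{s-1}{s-\tfrac{1-p}{p}},
\end{equation*}
so that $F_t$ is a rational function of $s$ whose only pole in the disc $\cbr{|s|<s^\ast(t)}$, $s^\ast(t):=\tfrac{e^{\lambda_p t}-(1-p)/p}{e^{\lambda_p t}-1}>1$, lies outside the closed unit disc. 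Hence $F_t$ is analytic past $s=1$; in particular all factorial moments $f_k(t):=\partial_s^kF_t(s)\big|_{s=1}$ are finite (so $\ev{}|X_t|^n<\infty$ for every $n$), the equation may be differentiated in $s$, and differentiating once at $s=1$ recovers $f_1'=\lambda_p f_1$, i.e. $\ev{}|X_t|=e^{\lambda_p t}$, consistently with $V_t$ being a martingale.

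For the fourth--moment bound I would induct on $k$. Since $u$ is quadratic, differentiating the equation $k$ times and setting $s=1$ (using $F_t(1)=1$) yields
\begin{equation*}
f_k'(t)=\lambda_p f_k(t)+\lambda p\sum_{i=1}^{k-1}\binom{k}{i}f_i(t)f_{k-i}(t),\qquad f_1(t)=e^{\lambda_p t},\quad f_2(t)=\tfrac{2p}{2p-1}\rbr{e^{2\lambda_p t}-e^{\lambda_p t}}.
\end{equation*}
If $f_i(t)\le C_ie^{i\lambda_p t}$ for all $i<k$ then the inhomogeneity above is $\le Ce^{k\lambda_p t}$, and solving the scalar linear ODE gives $f_k(t)\le \tfrac{C}{(k-1)\lambda_p}e^{k\lambda_p t}$ for $k\ge2$; expressing $|X_t|^4$ as a nonnegative combination of $f_1,\dots,f_4$ then produces $\ev{}|X_t|^4\le Ce^{4\lambda_p t}$ for \emph{all} $t\ge0$, which is exactly the claimed uniform bound on $\evx{}e^{-4\lambda_p t}|X_t|^4$ (no separate argument for small $t$ is needed, since the estimate is already uniform). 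The same recursion at $k=2$ gives $\ev{}V_t^2=e^{-2\lambda_p t}\rbr{f_2(t)+f_1(t)}\to\tfrac{2p}{2p-1}$, so $\cbr{V_t}$ is bounded in $L^2$; together with the a.s.\ convergence \eqref{eq:defV} this upgrades to convergence in $L^2$, whence $\ev{}(V_\infty)=\lim_t\ev{}V_t=1$ and $\var(V_\infty)=\lim_t\ev{}V_t^2-1=\tfrac{2p}{2p-1}-1=\tfrac{1}{2p-1}$.

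It remains to identify the law of $V_\infty$. By dominated convergence (using $0\le e^{-\theta V_t}\le1$ and \eqref{eq:defV}), for $\theta\ge0$,
\begin{equation*}
\ev{}e^{-\theta V_\infty}=\lim_{t\to\infty}\ev{}e^{-\theta V_t}=\lim_{t\to\infty}F_t\rbr{e^{-\theta e^{-\lambda_p t}}}=\frac{(2p-1)+(1-p)\theta}{(2p-1)+p\theta},
\end{equation*}
where the last equality comes from plugging $s=e^{-\theta e^{-\lambda_p t}}$ (so $s-1\sim-\theta e^{-\lambda_p t}$) into the closed form for $F_t$. A one--line partial--fraction decomposition shows that the right--hand side is the Laplace transform of $p_e\,\delta_0+(1-p_e)\,\mathrm{Exp}\!\rbr{\tfrac{2p-1}{p}}$ with $p_e=\tfrac{1-p}{p}$ (consistent with the mean and variance found above); by uniqueness of the Laplace transform this pins down the distribution of $V_\infty$, and in particular $\pr{V_\infty=0}=\lim_{\theta\to\infty}\ev{}e^{-\theta V_\infty}=p_e=\pr{Ext}$. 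Since trivially $Ext\subseteq\cbr{V_\infty=0}$ (on extinction $|X_t|=0$ for large $t$), the two events coincide up to a null set. All the structural statements above are classical facts on supercritical continuous--time Markov branching processes (see e.g.\ \cite[Ch.~III]{Athreya:2004xr}); only the explicit constants require the computation sketched here, and the one genuinely technical point is the uniform fourth--moment estimate, where making the induction on factorial moments yield a bound valid for every $t\ge0$ (not merely as $t\to\infty$) is the main thing to be careful about.
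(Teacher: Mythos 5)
Your proposal is correct and, at its core, follows the same route as the paper: both solve the backward Kolmogorov equation for the generating function/Laplace transform of $|X_t|$ in closed form (your relation $\frac{F_t(s)-1}{F_t(s)-(1-p)/p}=e^{\lambda_p t}\,\frac{s-1}{s-(1-p)/p}$ is exactly the paper's explicit formula for $w(t,\theta)$ under $s=e^{-\theta}$), then substitute $s=e^{-\theta e^{-\lambda_p t}}$ and pass to the limit to identify $\ev{}e^{-\theta V_\infty}$, from which $\pr{V_\infty=0}=p_e$, the identification $\cbr{V_\infty=0}=Ext$, and the conditional exponential law with parameter $\frac{2p-1}{p}$ follow just as in the paper. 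Where you genuinely diverge is in the moment estimates: the paper differentiates its explicit solution four times at $\theta=0$ and exhibits $\ev{}|X_t|^4$ as an explicit rational function of $e^{\lambda_p t}$, and gets finiteness of all moments from the non-vanishing at $\theta=0$ of the denominator appearing in the $n$-th derivative; you instead run an induction on the factorial moments via the recursion $f_k'=\lambda_p f_k+\lambda p\sum_{i=1}^{k-1}\binom{k}{i}f_i f_{k-i}$, yielding $f_k(t)\cleq e^{k\lambda_p t}$ uniformly in $t$, and you obtain finiteness of all moments from analyticity of $F_t$ beyond $s=1$ (pole at $s^*(t)>1$). Your route avoids the heavy explicit computation, works for every order $k$ at once, and is arguably more transparent about why the bound is uniform in $t$. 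One small repair is needed: ``bounded in $L^2$ together with a.s.\ convergence'' does not by itself give $L^2$ convergence of $V_t$ (Fatou only yields $\ev{}V_\infty^2\le\liminf_t\ev{}V_t^2$); you should either invoke Doob's convergence theorem for $L^2$-bounded martingales, which applies since $V_t$ is a martingale, or, more simply, read off $\ev{}(V_\infty)=1$ and $\var(V_\infty)=\frac{1}{2p-1}$ directly from the law of $V_\infty$ that you have already identified, which is in effect what the paper does.
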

The proof is deferred to the Appendix. We will denote the variable $V_\infty$ conditioned on non-extinction by $W$. The density of the invariant measure of the Ornstein-Uhlenbeck process defined by \eqref{eq:ouOperator} is given by 
\begin{equation}
	\eq(x) := \rbr{\frac{\mu}{\pi \sigma^2}}^{d/2}\exp \rbr{-\frac{\mu}{\sigma^2} \norm{x}{}{2}}. \label{eq:equilibrium} 
\end{equation}

\section{Results} 

\label{sec:results} This section is devoted to the presentation of our results. The proofs are deferred to Section \ref{sec:proofs}. Our first aim is to present a central limit theorem corresponding to the following law of large numbers (closely related to \cite[Theorem 6]{Englander:2007im} or \cite[Theorem 4.2]{Bansaye:2009cl}) 
\begin{theorem}
	\label{thm:LLN} Let $\cbr{X_t}_{t\geq 0}$ be the OU branching system starting from $x\in\Rd$. Let us assume that $f \in \pspace{}(\Rd)$. Then
	\[ \lim_{t \rightarrow +\infty} e^{-\lambda_p t}\ddp{X_t}{f} = \ddp{f}{\eq} V_\infty \:\: \text{ in probability}, \]
	or equivalently on the set of non-extinction, $Ext^c$, we have 
	\begin{equation}
		\lim_{t \rightarrow +\infty} |X_t|^{-1} \ddp{X_t}{f} = \ddp{f}{\eq} \:\: \text{ in probability}. \label{eq:LLN-simple} 
	\end{equation}
	Moreover, if $f$ is bounded then the almost sure convergence holds. 
\end{theorem}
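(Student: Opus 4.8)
The plan is to reduce the statement to a first-moment computation together with a martingale argument, using the explicit structure of the Ornstein-Uhlenbeck semigroup. First I would recall the standard many-to-one formula for the branching particle system: for $f\in\pspace{}(\Rd)$,
\[
\evx{\ddp{X_t}{f}} = e^{\lambda_p t}\, (T_t f)(x),
\]
where $\semi{T}$ is the OU semigroup with generator $L$ from \eqref{eq:ouOperator}. The key analytic input is the convergence of $T_t f$: since the OU process is ergodic with equilibrium $\eq$ and contracts $\Rd$ towards the origin at exponential rate $\mu$, one has $(T_t f)(x)\to\ddp{f}{\eq}$ as $t\to+\infty$, with the convergence uniform on compacts and with polynomial-in-$\norm{x}{}{}$ control on the error (this is where one uses the explicit Mehler-type formula for $T_t$ acting on $\pspace{}$). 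Hence $e^{-\lambda_p t}\evx{\ddp{X_t}{f}}\to\ddp{f}{\eq}$.

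Next I would establish convergence in probability by bounding the variance (or second moment) of $e^{-\lambda_p t}\ddp{X_t}{f}$. Using the many-to-two formula, $\evx{\ddp{X_t}{f}^2}$ splits into a ``diagonal'' term of order $e^{\lambda_p t}$ (negligible after dividing by $e^{2\lambda_p t}$) and a ``pair'' term which, integrating the branching events over $[0,t]$, is of the form
\[
\evx{\ddp{X_t}{f}^2} \;=\; e^{\lambda_p t}(T_t f^2)(x) \;+\; c\!\int_0^t e^{\lambda_p(2t-s)}\,\bigl(T_s\bigl[(T_{t-s}f)^2\bigr]\bigr)(x)\,\ds .
\]
Dividing by $e^{2\lambda_p t}$ and using $T_{t-s}f\to\ddp{f}{\eq}$ together with $T_s[\,\cdot\,](x)\to\ddp{\cdot}{\eq}$, the integral converges and one computes $\evx{(e^{-\lambda_p t}\ddp{X_t}{f})^2}\to \ddp{f}{\eq}^2\,\evx{V_\infty^2}$, which by Fact \ref{fact:aaa} matches $\ddp{f}{\eq}^2\,\evx{V_\infty^2}$. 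Combined with the convergence of the first moment to $\ddp{f}{\eq}\evx{V_\infty}$ and the $L^2$ convergence $V_t\to V_\infty$, this shows $e^{-\lambda_p t}\ddp{X_t}{f}-\ddp{f}{\eq}V_t\to 0$ in $L^2$, hence in probability; since $V_t\to V_\infty$ a.s., the first displayed limit follows. Dividing by $|X_t| = e^{\lambda_p t}V_t$ on $Ext^c$ (where $V_\infty>0$ by Fact \ref{fact:aaa}) gives \eqref{eq:LLN-simple}.

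For the almost sure statement when $f$ is bounded, I would pass through the martingale $V_t$ and a discretization argument: show that along the integer times $n\in\mathbb{N}$ the sequence $e^{-\lambda_p n}\ddp{X_n}{f}$ converges a.s. (via a Borel--Cantelli / fourth-moment estimate, for which the uniform bound $\evx{}e^{-4\lambda_p t}|X_t|^4\le C$ from Fact \ref{fact:aaa} is exactly what is needed), and then control the oscillation of $t\mapsto e^{-\lambda_p t}\ddp{X_t}{f}$ between consecutive integers using that $\sup_{s\le 1}\ddp{X_{n+s}}{\mathbf{1}}$ is comparable to $\ddp{X_n}{\mathbf{1}}$ up to a factor with good moments. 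Boundedness of $f$ makes these oscillation estimates clean because $|\ddp{X_t}{f}|\le\|f\|_\infty|X_t|$.

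The main obstacle I anticipate is not the branching bookkeeping (which is standard, cf.\ \cite[Section 1.13]{Athreya:2004xr}) but the quantitative control of the OU semigroup on the non-compact state space: one must show that $T_t f\to\ddp{f}{\eq}$ with errors that are integrable against the exponential weights appearing in the many-to-two formula, uniformly enough in the spatial variable, for all $f\in\pspace{}$. This requires exploiting the precise Gaussian form of the OU transition kernel (the contraction $e^{-\mu t}x$ of the mean and the bounded variance) rather than soft ergodic arguments, and it is also the place where the derivatives of $f$ in the sense of tempered distributions must be handled carefully so that the polynomial-growth class $\pspace{}$ is preserved under $T_t$.
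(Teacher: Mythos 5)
The in-probability part of your plan is essentially the paper's own argument: the moment formulas (the paper's \eqref{eq:moments-formula}) give exactly your many-to-one and many-to-two identities, and the conclusion follows from ergodicity of the OU semigroup plus dominated convergence. One imprecision: knowing $\evx{}\rbr{e^{-\lambda_p t}\ddp{X_t}{f}}^2\to\ddp{f}{\eq}^2\,\ev{}V_\infty^2$ and the first-moment limit does \emph{not} by itself give $e^{-\lambda_p t}\ddp{X_t}{f}-\ddp{f}{\eq}V_t\to0$ in $L^2$; you also need the cross term $e^{-2\lambda_p t}\evx{}\rbr{\ddp{X_t}{f}\,|X_t|}$, or, more simply, you should run the same second-moment computation directly for $\tilde f=f-\ddp{f}{\eq}$, which is what the paper does in \eqref{eq:LLNL2Convergence}: the pair term then carries the integrable factor $e^{-\lambda_p s}$ and $\T{t-s}\tilde f\to0$ pointwise, so the computation works uniformly in all three regimes of $\lambda_p$ versus $\mu$. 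This is easily repaired and I regard that part as correct.

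The genuine gap is in the almost sure statement for bounded $f$. The paper does not prove this from scratch: it invokes the strong law of \cite{Englander:2007im} (Theorem 6) for compactly supported functions, checks that the proof covers the present branching mechanism, and then extends to general bounded $f$ by sandwiching with the cut-offs $g_n$ and using $e^{-\lambda_p t}|X_t|\to V_\infty$ a.s. Your substitute—Borel--Cantelli along integer times plus an oscillation estimate—is fine along the lattice (the centered second moment of $e^{-\lambda_p n}\ddp{X_n}{\tilde f}$ decays exponentially, so even Chebyshev suffices; note however that the uniform bound $\ev{}e^{-4\lambda_p t}|X_t|^4\leq C$ of Fact \ref{fact:aaa} is \emph{not} ``exactly what is needed'', since it does not decay in $t$), but the interpolation step is not established by what you write. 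The bound $|\ddp{X_t}{f}|\leq\norm{f}{\infty}{}|X_t|$ together with comparability of $\sup_{s\leq1}|X_{n+s}|$ and $|X_n|$ only shows that the normalized quantity stays of order one on $[n,n+1]$; it does not show $\sup_{t\in[n,n+1]}\big|e^{-\lambda_p t}\ddp{X_t}{\tilde f}-e^{-\lambda_p n}\ddp{X_n}{\tilde f}\big|\to0$, because $\tilde f$ changes sign, the particles move a macroscopic amount within a unit interval, and the population is not monotone (particles die with probability $1-p$ at each branching event, so $\sup_{s\le1}|X_{n+s}|$ is not simply $|X_{n+1}|$). Closing this requires a genuine maximal-inequality argument over the interval—for instance Doob's inequality applied to $s\mapsto e^{-\lambda_p s}\ddp{X_s}{\T{u-s}h}$ on a refined lattice, combined with uniform-continuity control of $\T{u}f-f$ for small $u$—which is precisely the delicate portion of strong-LLN proofs such as \cite{Englander:2007im}, and is the reason the paper cites that result rather than reproving it.
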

\subsection{Small branching rate: $\lambda_p < 2\mu$} We denote $\tilde{f}(x):=f(x)- \ddp{f}{\varphi}$ and 
\begin{equation}
	\sigma_f^2 := \ddp{\eq}{\tilde{f}^2} + 2\lambda p \inti \ddp{\eq}{\rbr{ e^{(\lambda_p/2) s}\T{s} \tilde{f}}^2} \dd{s}. \label{eq:sigmaf} 
\end{equation}
The above expression becomes perhaps less cryptic when expressed in the base of the Hermite polynomials. Let $\cbr{H_i}_{i\geq0}$ be the probabilistic Hermite polynomials (see. e.g. \cite[page 5]{Harris:2000fy}) and let $f_{i_1,i_2,\ldots, i_d} := \intr \tilde{f}(x) \prod_{j=1}^d H_{i_j}(x_j) \eq(x) \dd{x}$. Obviously, $f_{0,0,\ldots, 0} =0$. Moreover, we have 
\begin{equation}
	\sigma_f^2 := \sum_{i_1=0, i_2 =0,\ldots, i_d=0}^{+\infty} f_{i_1, i_2, \ldots, i_d}^2\rbr{1+ \frac{ 2\lambda p}{2(i_1+i_2 +\ldots +i_d)\mu -\lambda_p}}. \label{eq:sigmafSubcriticalHermite} 
\end{equation}
We will comment on this formula in Remark \ref{rem:martingales}.

Let us also recall \eqref{eq:defV} and that $W$ is $V_\infty$ conditioned on $Ext^c$. The main result of this section is 
\begin{theorem}
	\label{thm:clt1} Let $\cbr{X_t}_{t\geq 0}$ be the OU branching system starting from $x\in \Rd$. Let us assume $\lambda_p<2\mu$ and $f\in \pspace{}(\Rd)$. Then $\sigma_f^2 <+\infty$ and conditionally on the set of non-extinction $Ext^c$ there is the convergence
	\[ \rbr{e^{-\lambda_p t}|X_t| , \frac{|X_t| - e^{t \lambda_p} V_\infty}{\sqrt{|X_t|}}, \frac{\ddp{X_t}{f} - |X_t|\ddp{f}{\eq} }{ \sqrt{|X_t|} } } \rightarrow^d (W, G_1, G_2), \]
	where $G_1\sim \mathcal{N}(0, 1/(2p-1)), G_2\sim \mathcal{N}(0,\sigma_f^2)$ and $W, G_1, G_2$ are independent random variables. 
\end{theorem}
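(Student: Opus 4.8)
The plan is to reduce the theorem to three ingredients: a martingale central limit theorem for one time-inhomogeneous martingale, which yields $G_2$; the branching property together with the classical fluctuation theory of the Galton-Watson process, which yields $G_1$; and a conditioning argument exploiting the spatial homogeneity of the branching, which yields the mutual independence of $W,G_1,G_2$. Write $(\mathcal F_t)_{t\geq0}$ for the natural filtration of $\semi{X}$, $\mathcal F_\infty:=\bigvee_t\mathcal F_t$, and $\tilde f:=f-\ddp{f}{\eq}$; all convergences below are on $Ext^{c}$. A preliminary reduction: expanding $\tilde f$ in the $L^2(\eq)$-normalised multivariate Hermite basis, on which $\T{s}$ acts diagonally with eigenvalue $e^{-(i_1+\dots+i_d)\mu s}$ on the $(i_1,\dots,i_d)$-level, and using that $f\in\pspace{}(\Rd)$ forces $\tilde f\in L^2(\eq)$ with vanishing constant coefficient, one gets at once the series \eqref{eq:sigmafSubcriticalHermite}, its finiteness — each factor $1+\frac{2\lambda p}{2(i_1+\dots+i_d)\mu-\lambda_p}$ is bounded because $\lambda_p<2\mu$ — and the decay estimate $|\T{s}\tilde f(x)|\cleq e^{-\mu s}(1+\norm{x}{}{k})$ for some $k$; in particular $e^{-\lambda_p s/2}\evx{\ddp{X_s}{\tilde f}}\to0$, so no extra centering is needed. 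Since $\sqrt{|X_t|}=e^{\lambda_p t/2}\sqrt{V_t}$ with $V_t\to V_\infty>0$ on $Ext^{c}$, it suffices to find the joint limit law of $\rbr{V_t,\ e^{\lambda_p t/2}(V_t-V_\infty),\ e^{-\lambda_p t/2}\ddp{X_t}{\tilde f}}$ and divide the last two coordinates by $\sqrt{V_t}$.

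\emph{The spatial martingale ($\to G_2$).} Fix $T>0$, put $\psi_r:=\T{r}\tilde f$ and $N^{(T)}_s:=e^{\lambda_p(T/2-s)}\ddp{X_s}{\psi_{T-s}}$, $s\in[0,T]$. Because $\partial_r\psi_r=L\psi_r$ cancels the contribution of the particles' motion and $e^{-\lambda_p s}$ cancels the branching drift, $s\mapsto N^{(T)}_s$ is a martingale on $[0,T]$ with terminal value $N^{(T)}_T=e^{-\lambda_p T/2}\ddp{X_T}{\tilde f}$. It\^{o}'s formula gives its predictable quadratic variation from the two noise sources — the Ornstein-Uhlenbeck diffusion of each particle, and the binary branching (the offspring number $K\in\{0,2\}$, so the squared jump is always $\psi_{T-s}(y)^2$):
\[
\langle N^{(T)}\rangle_T=\intc{T}e^{\lambda_p(T-2s)}\rbr{\sigma^{2}\ddp{X_s}{\norm{\nabla\psi_{T-s}}{}{2}}+\lambda\ddp{X_s}{\psi_{T-s}^{2}}}\ds .
\]
Substituting $r=T-s$, using Theorem \ref{thm:LLN} in the form $e^{-\lambda_p(T-r)}\ddp{X_{T-r}}{g}\to\ddp{\eq}{g}V_\infty$ under dominated convergence — legitimate since $\norm{\psi_r}{}{}+\norm{\nabla\psi_r}{}{}\cleq e^{-\mu r}$, $\lambda_p<2\mu$, and Fact \ref{fact:aaa} controls the moments of $e^{-\lambda_p(T-r)}|X_{T-r}|$ — and then integrating by parts with the reversibility identity $\sigma^{2}\ddp{\eq}{\norm{\nabla\T{r}\tilde f}{}{2}}=-2\ddp{\eq}{(\T{r}\tilde f)(L\T{r}\tilde f)}=-\tfrac{d}{dr}\ddp{\eq}{(\T{r}\tilde f)^{2}}$ turns the limit into $V_\infty$ times the defining expression of $\sigma_f^{2}$:
\[
\langle N^{(T)}\rangle_T\ \longrightarrow\ V_\infty\rbr{\ddp{\eq}{\tilde f^{2}}+2\lambda p\inti\ddp{\eq}{\rbr{e^{(\lambda_p/2)r}\T{r}\tilde f}^{2}}\dd r}=V_\infty\,\sigma_f^{2},\qquad T\to\infty ,
\]
the parenthesised expression being exactly \eqref{eq:sigmaf}. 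The bound $\sup_{s\leq T}|\Delta N^{(T)}_s|\to0$ in $L^2$ (same estimates) supplies the conditional Lindeberg condition, so a martingale central limit theorem — the array filtration $\mathcal F_s$ is independent of $T$, hence nested — gives that $N^{(T)}_T$ converges $\mathcal F_\infty$-stably to $\sqrt{V_\infty}\,\sigma_f\,Z$ with $Z\sim\mathcal N(0,1)$ independent of $\mathcal F_\infty$. Dividing by $\sqrt{V_t}\to\sqrt{V_\infty}$ we obtain $\ddp{X_t}{\tilde f}/\sqrt{|X_t|}\rightarrow^{d}\sigma_f Z=:G_2$, jointly with, and independent of, $V_\infty$ and the whole population process $(|X_t|)_{t\geq0}$.

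\emph{The population fluctuation ($\to G_1$) and independence.} By the branching property at time $t$, conditionally on $\mathcal F_t$ we have $V_\infty=e^{-\lambda_p t}\sum_{j=1}^{|X_t|}\hat V^{(j)}_\infty$, where the $\hat V^{(j)}_\infty$ are i.i.d.\ copies of $V_\infty$ (their law does not depend on the position of the $j$-th particle, by Fact \ref{fact:aaa}) and depend only on the evolution after time $t$; hence
\[
\frac{|X_t|-e^{\lambda_p t}V_\infty}{\sqrt{|X_t|}}=-\frac{1}{\sqrt{|X_t|}}\sum_{j=1}^{|X_t|}\rbr{\hat V^{(j)}_\infty-1}
\]
is a normalised sum of $|X_t|\to\infty$ i.i.d.\ terms of mean $1$ and variance $1/(2p-1)$ (Fact \ref{fact:aaa}), converging to $G_1\sim\mathcal N(0,1/(2p-1))$: conditionally on $\mathcal F_t$ this sum has law $\nu_{|X_t|}$, a deterministic function of $|X_t|$ converging weakly to $\mathcal N(0,1/(2p-1))$, and it is independent of $\mathcal F_t$. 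These last two facts also yield all the independences. Peeling off the conditionally-$\mathcal F_t$-independent factor $\nu_{|X_t|}$ shows $G_1$ is asymptotically independent of everything $\mathcal F_t$-measurable, in particular of $\ddp{X_t}{\tilde f}/\sqrt{|X_t|}$, so $G_1\perp G_2$; moreover each $B_t:=(|X_t|-e^{\lambda_p t}V_\infty)/\sqrt{|X_t|}$ is a function of $(|X_s|)_{s\geq0}$, which by the spatial step is independent of $G_2$, and $V_t\to W$ almost surely disposes of the first coordinate. Combining, $\rbr{e^{-\lambda_p t}|X_t|,\ (|X_t|-e^{\lambda_p t}V_\infty)/\sqrt{|X_t|},\ (\ddp{X_t}{f}-|X_t|\ddp{f}{\eq})/\sqrt{|X_t|}}\rightarrow^{d}(W,G_1,G_2)$ with $W,G_1,G_2$ independent.

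\emph{Main difficulty.} The conceptual heart is the identification $\langle N^{(T)}\rangle_T\to V_\infty\sigma_f^{2}$, which combines the law of large numbers (to pull $e^{-\lambda_p(T-r)}\ddp{X_{T-r}}{\cdot}\to\ddp{\eq}{\cdot}V_\infty$ inside a time integral) with the Ornstein-Uhlenbeck reversibility/integration-by-parts identity (to recognise the resulting integral as exactly $\sigma_f^2$). The bulk of the work, however, is quantitative and is forced by allowing polynomial growth of $f$: uniform integrability of $\langle N^{(T)}\rangle_T$, the $L^2$ jump bound for the martingale CLT, and the conditional CLT of the last step all rest on the fourth-moment bound $\evx{e^{-4\lambda_p t}|X_t|^4}\leq C$ of Fact \ref{fact:aaa} combined with Gaussian moment bounds for the Ornstein-Uhlenbeck process (equivalently, many-to-one and many-to-two estimates). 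One may instead bypass the martingale machinery and compute the joint characteristic function directly from the mild form of the log-Laplace equation with a carefully chosen $t$-dependent complex test function, trading martingale arguments for PDE asymptotics but needing the same moment inputs.
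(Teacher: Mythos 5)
Your argument is correct in substance, but it follows a genuinely different route from the paper. You prove the spatial part by a stable martingale CLT applied to the compensated process $N^{(T)}_s=e^{\lambda_p(T/2-s)}\ddp{X_s}{\T{T-s}\tilde f}$, identifying the limit of $\langle N^{(T)}\rangle_T$ with $V_\infty\sigma_f^2$ through the Dirichlet-form identity $\sigma^2\ddp{\eq}{\norm{\nabla\T{r}\tilde f}{}{2}}=-\frac{d}{dr}\ddp{\eq}{(\T{r}\tilde f)^2}$ and an integration by parts (I checked the bookkeeping: your branching coefficient $\lambda$ together with the boundary term reproduces exactly $\ddp{\eq}{\tilde f^2}+2\lambda p\inti\ddp{\eq}{(e^{(\lambda_p/2)r}\T{r}\tilde f)^2}\dd{r}$), and you obtain the Galton--Watson coordinate and all independences from the conditional law of $|X_t|^{-1/2}\sum_j(\hat V^{(j)}_\infty-1)$ given $\mathcal F_t$ plus the $\mathcal F_\infty$-stability of the Gaussian limit. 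The paper instead never introduces a martingale problem: it splits time at $t$ and $nt$, couples each subsystem $X^{i,t}$ to an i.i.d.\ copy started at $0$ via the explicit OU coupling (Fact \ref{fact:coupling}), computes $\sigma_f^2$ and fourth moments directly from the log-Laplace moment formulas \eqref{eq:moments-formula}, and runs a Lindeberg triangular-array CLT with characteristic functions conditioned on $\cbr{|X_t|\neq0}$, using Lemma \ref{lem:totalVariation} to pass to $Ext^c$. Your route is conceptually cleaner — the quadratic-variation computation explains where \eqref{eq:sigmaf} comes from, and stable convergence packages the factorisation of spatial and population fluctuations — while the paper's route is more elementary and self-contained, needing only moment formulas and the classical CLT. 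What you would still have to supply to make yours complete: the semimartingale decomposition and predictable quadratic variation for $\ddp{X_s}{\phi(s,\cdot)}$ with time-dependent, polynomially growing test functions (integrability must be justified, e.g.\ via the many-to-one moment bounds rather than only Fact \ref{fact:aaa}, which controls $|X_t|$ but not spatial moments); the passage of the LLN inside the time integral, since for unbounded integrands Theorem \ref{thm:LLN} gives only convergence in probability, so you need an $L^1$/uniform-integrability argument or direct first- and second-moment computations for $\langle N^{(T)}\rangle_T$; and the approximation of $Ext^c$ by $\cbr{|X_t|\neq0}$ in the final conditioning, which your sketch glosses over but which is exactly what Lemma \ref{lem:totalVariation} is for. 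None of these is a conceptual obstacle.
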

\begin{rem}
	\label{rem:jakis} As we already mentioned in the Introduction this is the most classical case. One can check that the theorem is still valid when the first particle is distributed according to $\eq$. The third term in the theorem can be written as: $|X_t|^{-1/2}\sum_{i=1}^{|X_t|} (f(X_t(i)) - \ddp{f}{\eq})$, moreover (by our special assumption about the starting condition) each $X_t(i)\sim \eq$. If these random variables were independent then the above sum would converge to $\mathcal{N}(0,\tilde{\sigma}_f^2)$, where $\tilde{\sigma}_f^2 = \ddp{\eq}{\tilde{f}^2}$ (the random number of elements in the sum is only a minor obstacle). Therefore, the additional integral term in \eqref{eq:sigmaf} reflects the dependence between $X_t(i)$'s. 
\end{rem}
\begin{rem}
	An important feature of our result is the factorisation of the fluctuations of the total mass process $\cbr{|X_t|}_t$ and the spatial fluctuations process i.e. $\cbr{\ddp{X_t}{f}}_t$. Using this fact we can easily prove a central limit theorem corresponding to \eqref{eq:LLN-simple} with deterministic normalisation. On the set of non-extinction $Ext^c$ we have
	\[ e^{-\lambda_p t/2}\rbr{\ddp{X_t}{f} - |X_t|\ddp{f}{\eq}} \rightarrow^d G_2 \sqrt{W}, \]
	where $W,G_2$ are the same as in Theorem \ref{thm:clt1}. 
\end{rem}
\begin{rem}
	The convergence of the spatial fluctuations can also be regarded as convergence of random fields. It is technically convenient to embed the space of point measures into the space of tempered distributions $\SP$ (i.e. the dual of the space of rapidly decreasing functions $\SD$). The L\'evy continuity theorem holds in nuclear spaces (e.g. \cite[Theorem B.]{Gorostiza:1994uq}), hence the following $\SP$-valued random variable: 
	\begin{equation}
		M_t:=\frac{X_t - |X_t| \eq(x) \dd{x} }{|X_t|^{1/2}}, \label{eq:randomMeasure} 
	\end{equation}
	converges to a Gaussian random field $M$ with covariance structure given by 
	\begin{displaymath}
		\cov(\ddp{M}{f_1}, \ddp{M}{f_2}) = \ddp{\eq}{\tilde{f}_1 \tilde{f}_2 } + 2\lambda p \inti \ddp{\eq}{\rbr{ e^{(\lambda_p/2) s}\T{s} \tilde{f}_1}\rbr{ e^{(\lambda_p/2) s}\T{s} \tilde{f}_2}} \dd{s}, 
	\end{displaymath}
	where $\tilde{f}_i(x) = f_i(x) - \ddp{\eq}{f_i}$. 
\end{rem}

\subsection{Critical branching rate: $\lambda_p = 2\mu$} We denote 
\begin{equation}
	\sigma_f^2 := \frac{\lambda p\sigma^2}{\mu} \sum_{i=1}^{d} \ddp{\frac{
	\partial f}{
	\partial x_i} }{\eq}^2. \label{eq:sigmafCritical} 
\end{equation}
Note that the same symbol $\sigma_f^2$ has already been used to denote the asymptotic variance in the small branching case. However, since these cases will always be treated separately, this should not lead to ambiguity. Analogously as in \eqref{eq:sigmafSubcriticalHermite} we can expresses $\sigma_f^2$ nicely using the Hermite expansion: 
\begin{equation}
	\sigma_f^2 = \frac{4\lambda p\mu}{\sigma^2} \rbr{f_{1,0,\ldots,0}^2 +f_{0,1,\ldots,0}^2 + f_{0,0,\ldots,1}^2}. \label{eq:sigmafCriticalHermite} 
\end{equation}
Let us also recall \eqref{eq:defV} and that $W$ is $V_\infty$ conditioned on $Ext^c$. The main result of this section is 
\begin{theorem}
	\label{thm:cltCritical} Let $\cbr{X_t}_{t\geq 0}$ be the OU branching system starting from $x\in \Rd$. Let us assume that $\lambda_p=2\mu$ and $f\in \pspace{}(\Rd)$. Then $\sigma_f^2 <+\infty$ and conditionally on the set of non-extinction $Ext^c$ there is the convergence
	\[ \rbr{e^{-\lambda_p t}|X_t| , \frac{|X_t| - e^{t \lambda_p} V_\infty}{\sqrt{|X_t|}}, \frac{\ddp{X_t}{f} - |X_t|\ddp{f}{\eq} }{ t^{1/2} \sqrt{|X_t|} } } \rightarrow^d (W, G_1, G_2), \]
	where $G_1\sim \mathcal{N}(0, 1/(2p-1)), G_2\sim \mathcal{N}(0,\sigma_f^2)$ and $W, G_1, G_2$ are independent random variables. 
\end{theorem}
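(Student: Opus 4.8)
\noindent The plan is to follow the scheme used for Theorem~\ref{thm:clt1} (itself modelled on \cite[Section~1.13]{Athreya:2004xr}); the one genuinely new feature is the analysis of the time integral that produces the limiting variance. Put $\tilde f:=f-\ddp{f}{\eq}$, so $\ddp{\tilde f}{\eq}=0$, and recall $V_t:=e^{-\lambda_p t}|X_t|\to V_\infty$ a.s.\ (Fact~\ref{fact:aaa}); on $Ext^c$ one has $|X_t|^{1/2}=e^{\lambda_p t/2}V_t^{1/2}$ with $V_t^{1/2}\to V_\infty^{1/2}>0$, so by Slutsky's lemma it suffices to prove that, conditionally on $Ext^c$,
\[ \rbr{V_t,\ \frac{|X_t|-e^{\lambda_p t}V_\infty}{e^{\lambda_p t/2}},\ \frac{\ddp{X_t}{\tilde f}}{t^{1/2}e^{\lambda_p t/2}}}\ \rightarrow^d\ \rbr{V_\infty,\ V_\infty^{1/2}G_1,\ V_\infty^{1/2}G_2}, \]
with $V_\infty,G_1,G_2$ independent, $G_1\sim\mathcal N(0,1/(2p-1))$, $G_2\sim\mathcal N(0,\sigma_f^2)$, and then divide the last two coordinates by $V_t^{1/2}$. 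Let $\mathcal F_t$ be the natural filtration. By the branching property, conditionally on $\mathcal F_t$ we have $V_\infty=e^{-\lambda_p t}\sum_{i=1}^{|X_t|}V_\infty^{(i)}$ with $V_\infty^{(i)}$ i.i.d.\ copies of $V_\infty$, so the second coordinate is the conditionally i.i.d., centred, normalised sum $e^{-\lambda_p t/2}\sum_{i=1}^{|X_t|}(1-V_\infty^{(i)})$, whence
\[ \ev\sbr{\exp\rbr{i\xi e^{-\lambda_p t/2}\textstyle\sum_{i=1}^{|X_t|}(1-V_\infty^{(i)})}\,\big|\,\mathcal F_t}=\psi\rbr{\xi e^{-\lambda_p t/2}}^{|X_t|}, \]
with $\psi(u):=\ev e^{iu(1-V_\infty)}=1-u^2/(2(2p-1))+o(u^2)$ (Fact~\ref{fact:aaa}); since $|X_t|e^{-\lambda_p t}=V_t\to V_\infty$ this converges a.s.\ to $\exp(-\xi^2V_\infty/(2(2p-1)))$. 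Granting the \emph{key claim}
\[ \evx\exp\rbr{-\theta V_t+i\eta\,\frac{\ddp{X_t}{\tilde f}}{t^{1/2}e^{\lambda_p t/2}}}\ \longrightarrow\ \ev\exp\rbr{-\rbr{\theta+\tfrac12\eta^2\sigma_f^2}V_\infty}\qquad(\theta\ge0,\ \eta\in\R), \]
equivalently $\big(V_t,\,t^{-1/2}e^{-\lambda_p t/2}\ddp{X_t}{\tilde f}\big)\rightarrow^d\big(V_\infty,V_\infty^{1/2}G_2\big)$ with $G_2\sim\mathcal N(0,\sigma_f^2)$ independent of $V_\infty$, the three‑dimensional statement follows by conditioning the joint characteristic function on $\mathcal F_t$ (the third coordinate being $\mathcal F_t$‑measurable) and combining the last two displays: one gets the limiting characteristic function $\ev\big[e^{i\zeta_1V_\infty}e^{-\zeta_2^2V_\infty/(2(2p-1))}e^{-\zeta_3^2\sigma_f^2V_\infty/2}\big]$, which is exactly that of $(V_\infty,V_\infty^{1/2}G_1,V_\infty^{1/2}G_2)$ with $V_\infty,G_1,G_2$ independent; conditioning on $Ext^c$ and dividing by $V_t^{1/2}$ concludes.

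To prove the key claim, set $\phi_T(y):=\theta e^{-\lambda_p T}-i\eta\,\tilde f(y)\,(T^{1/2}e^{\lambda_p T/2})^{-1}$ and use the log‑Laplace equation $\evx\exp(-\ddp{X_T}{\phi_T})=1-u_T(T,x)$, where $u_T$ is the mild solution of $\partial_s u=Lu+\lambda_p u-\lambda p\,u^2$, $u_T(0,\cdot)=1-e^{-\phi_T}$, i.e.
\[ u_T(s,\cdot)=e^{\lambda_p s}\T{s}\rbr{1-e^{-\phi_T}}-\lambda p\intc{s}e^{\lambda_p(s-r)}\T{s-r}\rbr{u_T(r,\cdot)^2}\,\dd{r}. \]
Split $\phi_T=g_T+h_T$ with $g_T\equiv\theta e^{-\lambda_p T}$ constant and $h_T:=-i\eta\,\tilde f\,(T^{1/2}e^{\lambda_p T/2})^{-1}$. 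The part $g_T$ alone yields the $x$‑independent solution $u^{(0)}(s)=1-\ev\exp(-\theta e^{-\lambda_p T}|X_s|)$, with $u^{(0)}(T)\to1-\ev\exp(-\theta V_\infty)$ by Fact~\ref{fact:aaa}; it must be kept \emph{exact}, since $e^{\lambda_p r}\T{r}g_T=\theta e^{-\lambda_p(T-r)}$ is of order $1$ near $r=T$. The part $h_T$ is genuinely small: because $\ddp{\tilde f}{\eq}=0$ and the Ornstein–Uhlenbeck semigroup satisfies $\T{s}\tilde f(x)=e^{-\mu s}\ell(x)+O(e^{-2\mu s})$ uniformly on compacts, where $\ell(x):=\sum_{i=1}^d\ddp{\frac{\partial f}{\partial x_i}}{\eq}x_i$ is the projection of $\tilde f$ onto the $(-\mu)$‑eigenspace of $L$, and because $\lambda_p=2\mu$, one checks $e^{\lambda_p r}\T{r}h_T=O(T^{-1/2})$ uniformly on $[0,T]$ on compacts. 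Expanding $u_T$ around $u^{(0)}$: the terms linear in $\eta$ contribute $-i\eta\,e^{\lambda_p T/2}\T{T}\tilde f(x)\cdot(\text{bounded})=-i\eta T^{-1/2}\ell(x)+o(T^{-1/2})\to0$, while the decisive quadratic‑in‑$\eta$ term is the nonlinear contribution $-\lambda p\intc{T}e^{\lambda_p(T-r)}\T{T-r}\big((e^{\lambda_p r}\T{r}h_T)^2\big)(x)\,\dd{r}$, which by $e^{\lambda_p r}\T{r}\tilde f(x)=e^{\mu r}\ell(x)+O(1)$ and $\T{s}\ell=e^{-\mu s}\ell$ equals $\frac{\lambda p\,\eta^2}{T}\intc{T}\T{T-r}(\ell^2)(x)\,\dd{r}+o(1)$; by the ergodic convergence $\frac1T\intc{T}\T{r}h(x)\,\dd{r}\to\ddp{h}{\eq}$ and the identity $\ddp{\ell^2}{\eq}=\frac{\sigma^2}{2\mu}\sum_i\ddp{\frac{\partial f}{\partial x_i}}{\eq}^2$ this tends to $\lambda p\,\eta^2\ddp{\ell^2}{\eq}=\tfrac12\eta^2\sigma_f^2$, with $\sigma_f^2$ as in \eqref{eq:sigmafCritical} (finiteness is immediate, since $\ddp{\frac{\partial f}{\partial x_i}}{\eq}=\frac{2\mu}{\sigma^2}\ddp{f}{x_i\eq}<\infty$ for $f\in\pspace{}$). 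Iterating the equation to all orders — equivalently, combining a priori $L^\infty_{\mathrm{loc}}$ bounds (from the $\T{s}$‑invariance of $\pspace{}$ and the moment estimates of Fact~\ref{fact:aaa}) with a self‑consistent identification of the limit — upgrades this to $\lim_T u_T(T,x)=1-\ev\exp(-(\theta+\tfrac12\eta^2\sigma_f^2)V_\infty)$, which is the key claim.

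The hard part is exactly that last step: controlling the full nonlinear log‑Laplace equation with the complex‑valued, merely polynomially bounded datum $\phi_T$ — here the explicit structure of the Ornstein–Uhlenbeck process (Mehler's formula, the exact spectral gap $\mu$) is essential for the a priori bounds — and doing the book‑keeping by which the exactly‑treated constant part $g_T$ and the perturbative $\eta$‑expansion combine into the shifted Galton–Watson Laplace transform $\ev\exp(-(\theta+\tfrac12\eta^2\sigma_f^2)V_\infty)$ rather than merely reproducing its low‑order Taylor coefficients. A secondary, more routine, point is the interchange of limits in the assembly step, where the a.s.\ convergence of the conditional characteristic function must be combined with the key claim and with $V_t\to V_\infty$ to read off the asymptotic independence of $G_1$ from $(V_\infty,G_2)$. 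Finally, compared with the small‑branching case the only structural novelty is that the relevant time integral now grows linearly (the first Hermite level sits exactly at the threshold $\lambda_p=2\mu$), which is what forces the extra $t^{1/2}$ in the normalisation and leaves in the limit only the gradient functionals $\ddp{\frac{\partial f}{\partial x_i}}{\eq}$.
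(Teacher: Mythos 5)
Your overall skeleton (reduce via Slutsky to the vector with deterministic normalisation, handle the Galton--Watson coordinate by conditioning on $\mathcal F_t$, and assemble the three coordinates through the conditional characteristic function, with the conditioning on $Ext^c$ handled at the end) is sound and broadly parallel to the paper's assembly, and your identification of the variance is correct: the level-one Hermite projection $\ell(x)=x\circ\ddp{\grad f}{\eq}$ sits exactly at the threshold $\lambda_p=2\mu$, the time integral grows linearly, and $\lambda p\,\eta^2\ddp{\ell^2}{\eq}=\tfrac12\eta^2\sigma_f^2$ matches \eqref{eq:sigmafCritical}. However, the core of the theorem is your ``key claim'', and for it you only offer a program, not a proof. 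Expanding the nonlinear log-Laplace equation with the complex, polynomially bounded datum $\phi_T$ to second order in $\eta$ identifies the \emph{constant} $\tfrac12\eta^2\sigma_f^2$, but the limit you need is $\ev\exp\rbr{-(\theta+\tfrac12\eta^2\sigma_f^2)V_\infty}$, in which this constant multiplies the random variable $V_\infty$ inside the exponent; producing that factor requires a full resummation/stability argument showing that the solution with datum $\phi_T$ is asymptotically the solution with the shifted constant datum $(\theta+\tfrac12\eta^2\sigma_f^2)e^{-\lambda_p T}$, together with a priori bounds on the complex-valued solution over all of space (not just on compacts, since $\T{s}$ and the iterated quadratic terms mix in the polynomial growth of $\tilde f$). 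You explicitly defer exactly this step (``iterating the equation to all orders \ldots self-consistent identification of the limit \ldots the hard part is exactly that last step''), so the decisive analytic content of the theorem is missing; as written, the argument would only reproduce the low-order Taylor coefficients of the limit law, which you yourself note is insufficient.

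For comparison, the paper never controls the nonlinear equation with nontrivial data: it only uses the equation to extract moment formulas \eqref{eq:moments-formula}--\eqref{eq:moments} at $\theta=0$. The critical-case proof then proceeds by a skeleton decomposition of the system at time $t$ out of a total time $nt$, coupling each subtree to an i.i.d.\ copy started at $0$ (Fact \ref{fact:coupling}), controlling the coupling error $H^n_t$ in $L^2$ uniformly in large $t$ by reusing the decomposition \eqref{eq:tobeusedagain} from the supercritical proof together with Lemma \ref{lem:bounded} and Fact \ref{fact:momentsCritical}, applying the Lindeberg CLT to the i.i.d.\ pieces with the second and fourth moment bounds of Fact \ref{fact:momentsCritical} (which rest on Lemma \ref{lem:decay}, in particular \eqref{eq:decay2}), reducing to smooth $f$ via the approximation scheme of Section \ref{sec:approximation} (condition \eqref{eq:assumptionUniform}), and finally letting $n\to\infty$ in the metric $m$. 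If you wish to pursue your route, the missing ingredient you must supply is precisely the quantitative stability of the log-Laplace equation under complex perturbations of the initial datum in a weighted (polynomial) norm, uniformly on the time horizon $[0,T]$; without it the key claim, and hence the theorem, is not established.
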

\begin{rem}
	We continue discussion from Remark \ref{rem:jakis}. This time the theorem is less classical as the normalisation is larger. We interpret this as the fact that $X_t(i)$'s become more dependent as $\lambda_p$ increases relatively to $\mu$. In other words the branching is so fast that any particle has many relatives which are still close to it. This also explains, at least on an intuitive level, the appearance of derivative in \eqref{eq:sigmafCritical}. One can think that the limit, in a sense of random fields, is a Gaussian white noise. 
\end{rem}

\subsection{Large branching rate: $\lambda_p > 2\mu$} We introduce the process 
\begin{equation}
	H_t := e^{(-\lambda_p + \mu)t} \sum_{i=1}^{|X_t|} X_t(i),\quad t\geq 0. \label{eq:martingale} 
\end{equation}
We have 
\begin{fact}
	\label{fact:martingaleConvergence} Let $\lambda_p>2\mu$. Then $H_t$ is a martingale with respect to the filtration of the OU branching system. We have $\sup_t \evx{H_t^2} <+\infty$, therefore there exists $$H_\infty := \lim_{t\rightarrow +\infty} H_t,$$ where the convergence holds a.s. and in $L^2$. When the OU branching system starts from $0$, then the martingales $V_t$ and $H_t$ are orthogonal. 
\end{fact}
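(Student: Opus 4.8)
The plan is to combine the branching (Markov) property with the first- and second-moment formulas for the system and with the explicit eigenstructure of the Ornstein--Uhlenbeck semigroup. Write $\phi_k(x)=x_k$, $k=1,\dots,d$, for the coordinate functions, so that $\sum_{i=1}^{|X_t|}X_t(i)=\rbr{\ddp{X_t}{\phi_1},\dots,\ddp{X_t}{\phi_d}}$ and $H_t$ is the $\Rd$-valued process with components $(H_t)_k=e^{(-\lambda_p+\mu)t}\ddp{X_t}{\phi_k}$; the statement that $H_t$ is a martingale is understood coordinatewise. I will use the branching property in the form: conditionally on $\mathcal{F}_t$, the process $X_{t+s}$ is the superposition of $|X_t|$ independent OU branching systems started from $X_t(1),\dots,X_t(|X_t|)$; the first-moment identity $\evx{\ddp{X_t}{g}}=e^{\lambda_p t}\T{t}g(x)$; and the second-moment identity
\[
	\evx{\ddp{X_t}{g}\ddp{X_t}{h}}=e^{\lambda_p t}\T{t}(gh)(x)+2\lambda p\intc{t}e^{\lambda_p s}e^{2\lambda_p(t-s)}\,\T{s}\sbr{(\T{t-s}g)(\T{t-s}h)}(x)\,\ds ,
\]
all of which follow from the branching property by conditioning on the first branching time and the standard renewal argument (cf.\ \cite[Section~1.13]{Athreya:2004xr}); taking $g=h=1$ recovers $\var(|X_t|)=(e^{2\lambda_p t}-e^{\lambda_p t})/(2p-1)$, consistent with Fact~\ref{fact:aaa}. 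The structural input from the Ornstein--Uhlenbeck process is that the coordinate functions are eigenfunctions of its generator, $L\phi_k=-\mu\phi_k$, so $\T{t}\phi_k=e^{-\mu t}\phi_k$, together with the elementary identity $\T{s}(\phi_k^2)(y)=e^{-2\mu s}y_k^2+\tfrac{\sigma^2}{2\mu}(1-e^{-2\mu s})$ coming from the Gaussian transition law.

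Granting these, the martingale property is immediate: by the branching property and the first-moment identity applied to each subpopulation, $\evx{\sbr{\ddp{X_{t+s}}{\phi_k}\mid\mathcal{F}_t}}=\sum_{i=1}^{|X_t|}e^{\lambda_p s}\T{s}\phi_k(X_t(i))=e^{(\lambda_p-\mu)s}\ddp{X_t}{\phi_k}$, and multiplying by $e^{(-\lambda_p+\mu)(t+s)}$ gives $\evx{\sbr{(H_{t+s})_k\mid\mathcal{F}_t}}=(H_t)_k$; integrability follows from the moment identities (and a fortiori from the next step). For the uniform $L^2$ bound I would substitute $g=h=\phi_k$ into the second-moment identity, insert $\T{t-s}\phi_k=e^{-\mu(t-s)}\phi_k$ and the formula for $\T{s}(\phi_k^2)$, and multiply through by $e^{2(-\lambda_p+\mu)t}$. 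The point of the computation is that, because $\T{t}\phi_k=e^{-\mu t}\phi_k$, all of the $t$-dependence in the exponents under the integral cancels, and one is left with $\evx{(H_t)_k^2}$ equal to $e^{(2\mu-\lambda_p)t}\rbr{e^{-2\mu t}x_k^2+\tfrac{\sigma^2}{2\mu}(1-e^{-2\mu t})}$, plus $\tfrac{2\lambda p}{\lambda_p}x_k^2(1-e^{-\lambda_p t})$, plus $\tfrac{\lambda p\sigma^2}{\mu}\intc{t}e^{(2\mu-\lambda_p)s}(1-e^{-2\mu s})\,\ds$. Since $\lambda_p>2\mu$, the first summand is bounded by $x_k^2+\tfrac{\sigma^2}{2\mu}$ (and tends to $0$), the second by $\tfrac{2\lambda p}{\lambda_p}x_k^2$, and in the third $e^{(2\mu-\lambda_p)s}$ is integrable on $[0,+\infty)$; summing over $k$ gives $\sup_t\evx{H_t^2}<+\infty$, whence the $L^2$ martingale convergence theorem yields $H_\infty=\lim_tH_t$, a.s.\ and in $L^2$.

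For the orthogonality when the system starts from $0$ I would argue by symmetry rather than by computing a predictable bracket. Both the Ornstein--Uhlenbeck dynamics started from $0$ and the branching mechanism are invariant under the reflection $x\mapsto-x$ (the drift $-\mu x$ is odd, while the driving noise, the branching rate and the initial condition $\delta_0$ are reflection invariant); hence the law of the whole trajectory $\semi{X}$ under $\mathbb{P}_0$ is invariant under reflecting the positions of all particles simultaneously. This operation leaves $|X_t|$, hence $V_t$, unchanged, while it turns $\sum_iX_t(i)$, hence $H_t$, into its negative, so $(V_t,H_t)\overset{d}{=}(V_t,-H_t)$ for every $t$ and, passing to the a.s.\ limits, $(V_\infty,H_\infty)\overset{d}{=}(V_\infty,-H_\infty)$. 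Since $V_t,V_\infty\in L^2$ (Fact~\ref{fact:aaa}) and $H_t,H_\infty\in L^2$ by the previous step, the products in question are integrable, whence $\mathbb{E}_0\rbr{V_t(H_t)_k}=-\mathbb{E}_0\rbr{V_t(H_t)_k}=0$ for all $t$ and $k$, and likewise $\mathbb{E}_0\rbr{V_\infty(H_\infty)_k}=0$; this is the asserted orthogonality of the martingales $V$ and $H$.

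The step I expect to be the main obstacle is the uniform $L^2$ estimate: one has to set up the second-moment identity with the correct constants and then track the exponential factors carefully to see the cancellation of the $t$-dependence and the resulting convergence of the last integral, which holds precisely because $\lambda_p>2\mu$ (for $\lambda_p\le 2\mu$ this integral diverges and $H_t$ is no longer $L^2$-bounded). The remaining steps are routine once the moment identities are available.
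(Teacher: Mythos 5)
Your proof is correct, and for the two main assertions it follows essentially the paper's route: the martingale property and the uniform $L^2$ bound both rest on the first- and second-moment formulas coming from the renewal/branching argument, i.e. \eqref{eq:moments-formula} and \eqref{eq:moments}, and your second-moment identity agrees with the paper's after the change of variables $s\mapsto t-s$. The difference in execution is that the paper estimates $\evx{}H_t^2$ by plugging $f(x)=\norm{x}{}{}$ into the moment formula and using decay bounds on $\T{s}f$, whereas you use that the coordinate functions are eigenfunctions, $\T{t}\phi_k=e^{-\mu t}\phi_k$, and compute $\evx{}(H_t)_k^2$ in closed form; your formula is correct (the $t$-dependence under the integral does cancel as you claim), and it makes the role of $\lambda_p>2\mu$ more transparent than the paper's estimate. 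Where you genuinely deviate is the orthogonality: the paper suggests polarising the moment formula to get $\ev{}_0\sbr{\ddp{X_t}{1}\ddp{X_t}{\phi_k}}=0$ (which vanishes because $\T{s}\phi_k(0)=0$), while you invoke the reflection invariance of the law under $\mathbb{P}_0$ to obtain $(V_t,H_t)=^d(V_t,-H_t)$ and hence $\ev{}_0 V_t(H_t)_k=0$ for all $t$ and in the limit. Both arguments establish uncorrelatedness, which is the sense of ``orthogonal'' intended here (compare the later remark that $V_\infty$ and $H_\infty$ are uncorrelated but not independent); the stronger requirement that $V_tH_t$ itself be a martingale in fact fails, as one sees from the conditional mixed-moment computation, so your symmetry argument proves exactly what is claimed and is arguably the cleaner way to do it.
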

The distribution of $H_\infty$ depends on the starting conditions. 
\begin{fact}
	\label{fact:law} Let $\cbr{X_t}_{t\geq 0}$ and $\{\tilde{X}_t\}_{t\geq 0}$ be two OU branching systems, the first one starting from $0$ and the second one from $x$. Let us denote the limit of the corresponding martingales by $H_\infty,\tilde{H}_\infty$ respectively. Then
	\[ \tilde{H}_\infty =^d H_\infty +x V_\infty , \]
	where $V_\infty$ is given by \eqref{eq:defV} for the system $X$. 
\end{fact}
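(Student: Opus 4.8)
The plan is to exploit the linearity of the Ornstein--Uhlenbeck dynamics in order to realise the two systems on a single probability space. Recall that a single Ornstein--Uhlenbeck particle solves $\dd{Y_t} = -\mu Y_t\,\dt + \sigma\,\dd{B_t}$, so that two solutions driven by the same Brownian motion, one started at $x$ and one at $0$, satisfy $Y^x_t - Y^0_t = e^{-\mu t} x$ for all $t\geq 0$. Since the branching mechanism (the exponential clocks with rate $\lambda$ and the offspring law $F$) does not depend on spatial position, I would build $\{\tilde X_t\}_{t\geq 0}$ and $\{X_t\}_{t\geq 0}$ on a common space so that they share the same genealogical tree --- the same branch times and the same number of offspring at each branch event --- and so that corresponding particles are driven by the same Brownian motions. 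Because each newborn starts at the position of its parent, an induction on the branch events then shows that under this coupling $|\tilde X_t| = |X_t|$ and $\tilde X_t(i) = X_t(i) + e^{-\mu t} x$ for every $i\leq |X_t|$ and every $t\geq 0$.

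With the coupling in hand the identity becomes a short computation: from the definition \eqref{eq:martingale},
\[ \tilde H_t = e^{(-\lambda_p+\mu)t}\sum_{i=1}^{|\tilde X_t|}\tilde X_t(i) = e^{(-\lambda_p+\mu)t}\sum_{i=1}^{|X_t|}\bigl(X_t(i) + e^{-\mu t}x\bigr) = H_t + e^{-\lambda_p t}|X_t|\,x = H_t + V_t x, \]
where $V_t = e^{-\lambda_p t}|X_t|$ is the Galton--Watson martingale of \eqref{eq:defV}, the same for both systems under the coupling. By Fact \ref{fact:martingaleConvergence} we have $H_t \to H_\infty$ a.s., and $V_t \to V_\infty$ a.s. by \eqref{eq:defV}; letting $t\to +\infty$ in the displayed identity gives $\tilde H_\infty = H_\infty + V_\infty x$ almost surely on this common space, with $H_\infty, V_\infty$ the limits attached to the system started from $0$. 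In particular the equality in distribution $\tilde H_\infty =^d H_\infty + x V_\infty$ follows.

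The only point requiring some care is the construction of the coupling, i.e.\ making precise the assertion that the OU branching system admits a representation in which the spatial displacement of each particle from the trajectory of a particle started at the origin is the deterministic factor $e^{-\mu t}$ applied to the initial offset. This is routine given the explicit linear structure of the OU process --- one can, for instance, describe the system via a marked Galton--Watson tree together with an i.i.d.\ family of Brownian motions indexed by the edges of the tree, and define the position of a particle as $e^{-\mu t}$ times its starting point plus the usual stochastic convolution along its ancestral line --- but it carries the whole argument, so I would isolate it as a small lemma (or recall it from the construction of the model) before performing the computation above.
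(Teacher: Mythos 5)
Your proposal is correct and is essentially the paper's own argument: the paper also couples the two systems via Fact \ref{fact:coupling} (same driving Brownian motions, identical branching structure), obtains $\tilde{H}_t - H_t = e^{(-\lambda_p+\mu)t}\,x\,e^{-\mu t}|X_t| = x V_t$, and lets $t\to+\infty$. Your extra care in spelling out the tree-indexed construction of the coupling only makes explicit what the paper leaves implicit.
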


Let us denote by $J$ the random variable $H_\infty$ conditioned on $Ext^c$.

\begin{theorem}
	\label{thm:clt2} Let $\cbr{X_t}_{t\geq 0}$ be the OU branching system starting from $x\in \Rd$. Let us assume that $\lambda_p>2\mu$ and $f\in \pspace{}(\Rd)$. Then conditionally on the set of non-extinction $Ext^c$ there is the convergence 
	\begin{equation}
		\rbr{e^{-\lambda_p t}|X_t| , \frac{|X_t| - e^{t \lambda_p} V_\infty}{\sqrt{|X_t|}}, \frac{\ddp{X_t}{f} - |X_t|\ddp{f}{\eq} }{ \exp\rbr{(\lambda-\mu)t} } } \rightarrow^d (W, G, \ddp{\grad f}{\eq} \circ J), \label{eq:triple} 
	\end{equation}
	where $G\sim \mathcal{N}(0, 1/(2p-1))$ and $(W,J), G$ are independent. Moreover 
	\begin{equation}
		\rbr{e^{-\lambda_p t}|X_t|, \frac{\ddp{X_t}{f} - |X_t|\ddp{f}{\eq} }{ \exp\rbr{(\lambda-\mu)t} } } \rightarrow (V_\infty,\ddp{\grad f}{\eq} \circ H_\infty), \quad \text{in probability.} \label{eq:inProbablity} 
	\end{equation}
\end{theorem}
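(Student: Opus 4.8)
The plan is to follow the Laplace transform / log-Laplace equation approach used in the subcritical and critical cases, but exploit the martingale $H_t$ from Fact \ref{fact:martingaleConvergence} as the organizing object, since the normalization $e^{(\lambda-\mu)t}$ is exactly tuned to it. The first step is a reduction: using the almost-sure convergence $e^{-\lambda_p t}|X_t| \to V_\infty$ from \eqref{eq:defV} and the decomposition $f = \ddp{f}{\eq} + \tilde f$, I would show that $\ddp{X_t}{\tilde f}/e^{(\lambda-\mu)t}$ carries all the relevant fluctuation and that the polynomial growth of $f$ (membership in $\pspace{}$) lets us control remainder terms via the fourth-moment bound $\evx{} e^{-4\lambda_p t}|X_t|^4 \leq C$ from Fact \ref{fact:aaa}. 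The heuristic is that, since the branching is so fast, the dominant contribution to $\ddp{X_t}{\tilde f}$ comes from first-order Taylor behaviour: $\tilde f(y) \approx \ddp{\grad f}{\eq}\circ(\text{position})$ in the relevant scaling, which is precisely why $\sum_i X_t(i) = e^{(\lambda_p-\mu)t} H_t$ appears and why the limit is $\ddp{\grad f}{\eq}\circ H_\infty$. I would make this rigorous by writing the OU semigroup explicitly — $\T{s}g(x) = \ev{} g(e^{-\mu s}x + \sigma(\cdots))$ — so that $e^{\lambda_p t}\T{t}\tilde f(x)$ is asymptotically $e^{(\lambda_p-\mu)t}\,x\circ\ddp{\grad f}{\eq}$ plus lower-order terms, and then lift this from the one-particle semigroup to the branching system.

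Next I would establish \eqref{eq:inProbablity}, the convergence in probability, first, since it is the genuinely new phenomenon. The natural route is an $L^2$ (second-moment) computation: using the many-to-one and many-to-two lemmas for branching particle systems (the analogues of \cite[Section 1.13]{Athreya:2004xr}), compute $\evx{}\bigl(\ddp{X_t}{\tilde f}\bigr)^2$ and the cross term with $H_t$, and show that $e^{-(\lambda-\mu)t}\ddp{X_t}{\tilde f} - \ddp{\grad f}{\eq}\circ H_t \to 0$ in $L^2$. Combined with $H_t \to H_\infty$ in $L^2$ (Fact \ref{fact:martingaleConvergence}) and $e^{-\lambda_p t}|X_t| \to V_\infty$ a.s., this yields \eqref{eq:inProbablity} on all of the probability space, and conditioning on $Ext^c$ is immediate since on $Ext$ everything vanishes. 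The second-moment bookkeeping is where one must be careful: one splits the time interval $[0,t]$ at the last common ancestor of a pair of particles, and the exponential weight $e^{-2(\lambda-\mu)t}$ against the $e^{\lambda_p s}$ growth of the number of ancestral pairs at time $s$ must balance — the condition $\lambda_p > 2\mu$, i.e. $\lambda_p - \mu > \lambda_p/2$, is exactly what makes the integral $\int_0^t e^{\lambda_p s} e^{-2\mu(t-s)} e^{-2(\lambda-\mu)t}\,ds$ converge to the right constant rather than blow up or vanish.

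For the joint convergence \eqref{eq:triple}, the point is that the second coordinate $(|X_t| - e^{t\lambda_p}V_\infty)/\sqrt{|X_t|}$ is a purely genealogical fluctuation of order $e^{\lambda_p t/2}$, which is of \emph{smaller} order than $e^{(\lambda-\mu)t}$, so its asymptotic Gaussianity (proved by the classical Galton-Watson CLT, essentially \cite[Theorem I.10.1]{Athreya:2004xr} plus a CLT for $V_t - V_\infty$) is unaffected by the spatial part, and conversely the spatial coordinate's limit $\ddp{\grad f}{\eq}\circ J$ is a measurable function of the $L^2$-limit $(V_\infty, H_\infty)$. To get the full independence structure claimed — that $(W,J)$ is independent of $G$ — I would use the orthogonality of the martingales $V_t$ and $H_t$ (Fact \ref{fact:martingaleConvergence}, via the shift to starting point $0$ and Fact \ref{fact:law}) together with a decoupling argument: condition on the genealogical tree and show the spatial displacements contribute a limit depending only on $H_\infty$, while the residual fluctuation of $|X_t|$ around $e^{\lambda_p t}V_\infty$ is asymptotically independent Gaussian. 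The main obstacle I anticipate is precisely this last decoupling step — disentangling the three limits cleanly when two of them ($W$ and the Gaussian $G$) live at the genealogical level and share the same source of randomness (the Galton–Watson tree), so that one needs a careful conditional argument, analogous to but more delicate than the factorization lemmas used in Theorem \ref{thm:clt1}, to show that the normalized overshoot $(|X_t| - e^{\lambda_p t}V_\infty)/\sqrt{|X_t|}$ decorrelates asymptotically from both $V_\infty$ and the spatial martingale $H_t$.
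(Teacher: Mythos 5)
Your proposal reaches the paper's limit object but carries out the key step by a genuinely different route. The paper proves \eqref{eq:inProbablity} pathwise: each subsystem born at time $t$ is coupled with a copy started from $0$ driven by the same Brownian motion (Fact \ref{fact:coupling}), $\tilde f$ is Taylor-expanded around the coupled positions, the remainder is killed using the linear-growth-of-range Lemma \ref{lem:bounded} plus the smoothing/approximation argument of Section \ref{sec:approximation}, and what survives is exactly $\ddp{\grad f}{\eq}\circ H_t$. You instead propose a purely analytic second-moment argument: with $g(x):=\ddp{\grad f}{\eq}\circ x$ one has $\T{s}g=e^{-\mu s}g$ exactly, while \eqref{eq:decay2} gives $\T{s}(\tilde f-g)(x)=O\rbr{e^{-2\mu s}(1+\norm{x}{}{\tilde n})}$, so plugging $h=\tilde f-g$ into \eqref{eq:moments-formula} shows $\evx{}\rbr{e^{-(\lambda_p-\mu)t}\ddp{X_t}{h}}^2\to 0$ whenever $\lambda_p>2\mu$; combined with $H_t\to H_\infty$ in $L^2$ (Fact \ref{fact:martingaleConvergence}) and $e^{-\lambda_p t}|X_t|\to V_\infty$ a.s., this yields \eqref{eq:inProbablity} with no coupling, no Lemma \ref{lem:bounded}, and no smoothing step (the gradient enters only weakly through \eqref{eq:decay2}); this is closer to the semigroup-only method the paper attributes to \cite{Mios:2011fk} and is arguably more robust. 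Two caveats: your heuristic balance integral drops the $e^{2\lambda_p(t-s)}$ growth of the two descendant lines, so as written it tends to $0$ rather than to a constant (the constant-order quantity is $\ev{}H_t^2$, not the difference you are bounding); and orthogonality of $V_t$ and $H_t$ cannot by itself deliver independence of $G$ from $(W,J)$ — indeed $V_\infty$ and $H_\infty$ are uncorrelated yet dependent — so the independence must come, as you also indicate, from the temporal decoupling in Theorem \ref{thm:clt1}'s characteristic-function argument (the Gaussian $G$ is built from the post-$t$ variables $V^i_\infty$, while $W$ and $J$ are limits of $\mathcal{F}_t$-measurable quantities), which is precisely how the paper concludes \eqref{eq:triple}.
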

\begin{rem}
	As we noted in the Introduction this case hardly resembles the classical CLT. The convergence in probability is perhaps its most unexpected feature. Unfortunately, we cannot present any satisfying intuitive explanation. This phenomenon seems to be closely related to the fact that the branching is so fast that the system is ``not able to forget'' the starting condition and in fact, up to some degree, it ``remembers'' its whole evolution (encoded in martingale $H_t$). Analogously to the critical case the limit, treated as a random field, it is some sort of white noise (not Gaussian). 
\end{rem}
\begin{rem}
	We were not able to derive any explicit formula for the law of $H_\infty$. However we calculated some of its moments. As the formulas become lengthy, we assume that $p=1$ and present only:
	\[ \ev{}_0 (H_\infty)^2 = 2\gamma,\quad \ev{}_0 (H_\infty)^4=\frac{96 \gamma ^2 \left(16+39 \gamma +30 \gamma ^2+8 \gamma ^3\right)}{9+27 \gamma +26 \gamma ^2+8 \gamma ^3}, \]
	{\tiny
	\[ \ev{}_0 (H_\infty)^6 = \frac{1440 \gamma ^3 \left(36847+285675 \gamma +948012 \gamma ^2+1760420 \gamma ^3+2005408 \gamma ^4+1441120 \gamma ^5+642112 \gamma ^6+163584 \gamma ^7+18432 \gamma ^8\right)}{(1+\gamma )^2 (3+2 \gamma ) (5+4 \gamma ) (5+6 \gamma ) (5+8 \gamma ) \left(6+17 \gamma +12 \gamma ^2\right)}, \]
	} where $\gamma = ({\lambda_p}/{\mu}-2)^{-1}$. The even moments are $0$ as the distribution is symmetric. One can now check that $H_\infty$ is never Gaussian. Moreover, $V_\infty$ and $H_\infty$ are not independent (even though uncorrelated!). Their dependence is not trivial. By similar calculations of the fourth moment of $x V_\infty + H_\infty$ we also checked that $H_\infty$ is not of the form $V_\infty G$, where $G$ is some random variable (not necessarily normal) independent of $V_\infty$. 
\end{rem}
\begin{rem}
	We suspect that the convergence in \eqref{eq:inProbablity} is in fact almost sure. 
\end{rem}

\subsubsection*{General remarks} Now we will present general remarks common for all cases.
\begin{rem}
	\label{rem:martingales} The forms of \eqref{eq:sigmafSubcriticalHermite} and \eqref{eq:sigmafCriticalHermite} as well as the one of \eqref{eq:martingale} (which is nothing else than $e^{-(\lambda_p -\mu)t} \ddp{X_t}{H_1}$) seem to stem from some hidden underlying structure. This indeed is the case which, for notation reasons, we will explain for $d=1$. Formally, one can write
	\[ \ddp{X_t}{f} = \sum_{i=0}^{+\infty} f_i e^{ (\lambda_p - i \mu) t} M^i_t, \]
	where $f_i$ is the expansion of $f$ in the Hermite base and for $i\geq0$ processes $\cbr{M^i_t}_{t\geq 0}$ are given by
	\[ M^i_t = e^{ -(\lambda_p - i \mu) t} \ddp{X_t}{H_i}. \]
	One easily checks that $M^i$ are martingales. Understanding this expansion and its relation to our results will be subject of further research. An analogous expansion for a slightly different model was presented in \cite{Harris:2000fy}. 
\end{rem}
\begin{rem}
	\label{rem:comparison} A result close to ours was presented in \cite[Section 5.1]{Bansaye:2009cl}, where the authors consider a model $\cbr{Y_t}_{t\geq 0}$ in which particles move according to a diffusion on the real line with the generator $Lf(x) = b(x) f'(x) +\frac{\sigma^2(x)}{2} f''(x)$. Upon the event of branching the position of mother (interpreted in \cite{Bansaye:2009cl} as its value) is split between its progeny according to a certain random law. Further, they define, a family of measure-valued stochastic process $\cbr{\eta^T_t}_{t\geq 0}$ by 
	\begin{equation}
		\ddp{\eta_t^T}{f} := e^{(\lambda_p (t+T))/2} \rbr{\frac{ \ddp{X_{t+T}}{f} }{e^{\lambda p (t+T)}} - \frac{\ddp{X_T}{\T{t} f}}{e^{\lambda_p T}}}, \label{eq:tmp17} 
	\end{equation}
	where $\T{t}f = e^{-\lambda_p t} \ev{}_x \ddp{f}{X_t}$. In a carefully chosen functional space (we skip the details for the sake of brevity) $\eta^T \rightarrow^d \eta$, where $\eta$ is a solution of the following stochastic equation
	\[ \ddp{\eta_t}{f} = \intc{t}\intr \rbr{L f(x) + J f(x)} \eta_s(\dd{x} ) \dd{s} + \sqrt{W} \mathcal{W}_t (f), \]
	where $W$ is an analogue of our $V_\infty$, $\mathcal{W}$ is a certain Gaussian martingale with values in a functional space and $J$ is a certain operator related to the branching. As we already mentioned in the Introduction the expression \eqref{eq:tmp17} represents ``running fluctuations'' i.e. study the differences between two time points at a finite distance $t$ while our approach is more like studying difference between time $t$ and infinity. Consequently, it does not seem to be a direct relation between our result and that of \cite{Bansaye:2009cl}. This is also the reason why their result does not change when the branching intensity is being altered. 
\end{rem}
\begin{rem}
	\label{rem:others1} The most important extension of the present work will be to study systems with particles movement given by more general Markov processes (starting with diffusions). It is also very interesting to study the case of systems with non-homogenous branching rate (like the ones in \cite{Englander:2007im}). These questions are harder to answer than our present result. We expect that spectral theory of operators will play a crucial role. One may suspect that formulation of the corresponding results will be much more involved comparing to the Ornstein-Uhlenbeck case. The exponential rate of convergence to the equilibrium measure is essential for a behaviour similar to our slow and critical branching case. This entails that the strange behaviour of the supercritical case may be common for many natural examples, e.g. the branching Brownian motion. In this paper an essential idea was to use an explicit coupling of Ornstein-Uhlenbeck processes. This methods has an advantage of conceptualising and simplifying the proofs. However, it is not amenable to generalisations. The first step to overcome this problem was made in \cite{Mios:2011fk}. Working with a similar model, the author was able to prove CLT results using only analytical properties of the Ornstein-Uhlenbeck semigroup. We believe that such approach will be much easier to extend. We expect that the task will be easiest in the ``small branching rate'' case.
	
To give the reader a glimpse of forthcoming complications let us consider the branching particle system $X$ in $\R^2$ such that particles move according to independent Ornstein-Uhlenbeck processes with different drift parameters $\mu_1<\mu_2$. Let us denote by $\eq_1,\eq_2$ the invariant measures on the first and second coordinate respectively. We fix some $g_1,g_2:\R \mapsto \R $ such that $\ddp{g_1}{\eq_1} =0$ and $\ddp{g_2}{\eq_2} =0$. Further we denote $f_1 := g_1\otimes 1$, $f_2=1\otimes g_2$. We now consider
	\[ \ddp{X_t}{f_1} = \ddp{X^1_t}{g_1},\quad \ddp{X_t}{f_2} = \ddp{X^2_t}{g_2}. \]
	In the above we stick to notation that $X_t^1(i),X_t^2(i)$ denotes the first, respectively second coordinate of the position of the $i$-th particle at time $t$. The situation is ``standard'' when $\lambda_p<2 \mu_1 = 2 \min(\mu_1, \mu_2)$. By Theorem \ref{thm:clt1} in both cases we obtain convergence to a Gaussian limit (with normalisation $|X_t|^{-1/2}$). This is no longer true when $\lambda_p\geq \min(\mu_1, \mu_2)$, using Theorem \ref{thm:cltCritical} and \ref{thm:clt2} one easily checks that the normalisation ``required'' by $\ddp{X_t}{f_1}$ is strictly larger than the one corresponding to $\ddp{X_t}{f_2}$. Even in this simple example the situation becomes ``singular''. 
	
	We expect that the phenomena described above will hold in more general cases. Therefore, in the subcritical case it should be relatively easy to obtain analogs of our results for more general motion processes. Contrarily, the supercritical case may turn out to be much harder and the description of the limit may depend on the motion (or its infinitesimal operator) in a complex way. 
\end{rem}
\begin{rem}
	\label{rem:others2} Now we list other possible extensions of our results. The first has already been obtained, a parallel paper \cite{Adamczak:2011fk} contains the corresponding results for $U$-statistics (as it was described in Introduction). Secondly, corresponding results was also obtained for superprocesses in \cite{Mios:2011fk}. Another natural extension would be to consider more general branching laws. The first problem to consider is a law of large numbers. The statement of the Kesten-Stigum theorem suggest the weakest possible conditions and the Seneta-Heyde theorem suggests further extensions (we refer the reader to \cite{Biggins:1997uq} and references therein). Secondly, we suspect that at least some of our CLT results hold when the branching law has finite variance. The case of infinite variance branching laws is interesting as well, one should expect a stable law in the limit.
	
	Finally, other interesting lines of research are the large deviation principle and functional convergence. 
\end{rem}

\section{Proofs} 

\label{sec:proofs}

\subsection{Ornstein-Uhlenbeck process} 

\label{sub:ornstein_uhlenbeck} The semigroup of the Ornstein-Uhlenbeck process will be denoted by $\T{}$. It can calculated with the following formula 
\begin{equation}
	\T{t} f(x) = (g_t \ast f)(x_t), \quad x_t:= e^{-\mu t} x, \label{eq:OUrep} 
\end{equation}
where
\[ g_t(x) = \rbr{\frac{\mu}{\pi \sigma_t^2}}^{d/2}\exp \cbr{-\frac{\mu}{\sigma_t^2} x^2}, \quad \sigma_t^2 := \sigma^2(1-e^{-2\mu t}). \]
We denote $ou(t) := \sqrt{1- e^{-2\mu t}}$ and let $G\sim \eq$. The semigroup $\T{}$ has the following useful representations 
\begin{equation}
	\T{t} f(x) = \intr f(x_t - y)g_t(y) \dd{y} = \intr f\rbr{x e^{-\mu t} + ou(t) y } \eq(y) \dd{y} = \ev{} f(xe^{-\mu t} + ou(t) G). \label{eq:semi-group} 
\end{equation}

The following coupling will be very useful for further analysis. 
\begin{fact}
	\label{fact:coupling} There exists a probability space and two Ornstein-Uhlenbeck processes $\cbr{\eta_t}_{t\geq 0}$, $\cbr{\gamma_t}_{t\geq 0}$ defined on this space such that $\eta_0 = x$ and $\gamma_0 = 0$ which fulfil
	\[ \eta_t - \gamma_t = x e^{-\mu t}, \text{ a.s.} \]
\end{fact}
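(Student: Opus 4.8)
The plan is to realise both processes as strong solutions of the Ornstein--Uhlenbeck stochastic differential equation driven by a \emph{single} Brownian motion, so that the noise cancels when one takes the difference. Concretely, I would take a probability space carrying a standard $d$-dimensional Brownian motion $\cbr{B_t}_{t\geq 0}$ and define
\[
\eta_t := x e^{-\mu t} + \sigma \int_0^t e^{-\mu(t-s)} \, \dd{B_s}, \qquad
\gamma_t := \sigma \int_0^t e^{-\mu(t-s)} \, \dd{B_s}, \qquad t\geq 0,
\]
choosing continuous modifications of the two stochastic integrals (which are in fact the same continuous process).

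The first step is to check that these are genuine Ornstein--Uhlenbeck processes with the right starting points. By It\^o's formula, or equivalently by the elementary theory of linear SDEs, each of $\eta$ and $\gamma$ is a strong solution of $\dd{Z_t} = -\mu Z_t\,\dt + \sigma\,\dd{B_t}$ with initial condition $x$, respectively $0$; hence each is a time-homogeneous Markov process with infinitesimal operator $L$ from \eqref{eq:ouOperator}, and in particular has the transition semigroup $\T{}$ described in \eqref{eq:OUrep}--\eqref{eq:semi-group}. Thus $\cbr{\eta_t}_{t\geq 0}$ is an Ornstein--Uhlenbeck process started at $x$ and $\cbr{\gamma_t}_{t\geq 0}$ one started at $0$, both living on the same probability space.

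The second step is the identity itself, which is now immediate: the stochastic integrals in the two defining formulas are literally the same process, so $\eta_t - \gamma_t = x e^{-\mu t}$ for all $t\geq 0$, almost surely. Equivalently, $D_t := \eta_t - \gamma_t$ solves the deterministic linear ODE $\dd{D_t} = -\mu D_t\,\dt$ with $D_0 = x$, whose unique solution is $x e^{-\mu t}$. There is essentially no obstacle in this argument; the only point requiring a word of care is that the pathwise identity should hold simultaneously for all $t$ (not just for each fixed $t$ almost surely), and this is guaranteed by using continuous versions of the integrals, or simply by reading it off the explicit closed-form expressions above.
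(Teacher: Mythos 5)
Your proposal is correct and takes essentially the same approach as the paper: both couple the two processes through a single driving Brownian motion, so that the noise cancels and the difference satisfies the deterministic equation $\dd{(\eta_t-\gamma_t)} = -\mu(\eta_t-\gamma_t)\dt$ with initial value $x$. Writing the explicit solution formula rather than the SDE is only a cosmetic difference.
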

\begin{proof}
	Let $\eta$ be the (unique, strong) solution of the following stochastic differential equation
	\[ \dd{\eta}_t = \sigma \dd{\beta}_t - \mu \eta_t \dd{t}, \quad \eta_0 = x, \]
	where $\beta$ is the standard Wiener process. We construct $\gamma$ by using the same $\beta$ i.e.
	\[ \dd{\gamma}_t = \sigma \dd{\beta}_t - \mu \gamma_t \dd{t}, \quad \gamma_0 = 0. \]
	The result follows by the subtraction
	\[ \dd{(\eta_t - \gamma_t)} = -\mu (\eta_t - \gamma_t) \dd{t}, \quad \eta_0 -\gamma_0 = x. \]
\end{proof}

\subsection{Laplace transform and moments} 

\label{sub:laplace_transform} Now we compute the Laplace transform of $X$ and derive moments. We use standard techniques in the branching processes theory (see e.g. \cite{Gorostiza:1991aa}) hence we skip some details. Let $f:\Rd\mapsto \R_+$ be a continuous compactly supported function. We denote
\[ w(x,t,\theta) := \evx{}\exp\rbr{-\ddp{\theta f}{X_t}}. \]
By standard conditioning, renewal arguments and the branching property we obtain
\[ w(x,t,\theta) = e^{-\lambda t} \T{t} e^{-\theta f}(x) + \lambda \intc{t} e^{-\lambda(t-s)} \T{t-s} F(w(\cdot, s, \theta))(x) \dd{s}, \]
where $F$ is the generating function of the branching law. The last expression writes as 
\begin{equation}
	\frac{d}{dt} w(x,t,\theta) = (L-\lambda) w(x,t,\theta) + (\lambda p) w(x,t,\theta)^2 + \lambda (1-p), \:\: w(x,0,\theta) = e^{-\theta f(x)}. \label{eq:laplace-aux} 
\end{equation}
Hence the Laplace transform satisfies 
\begin{equation}
	w(x,t, \theta) = \T{t} e^{-\theta f }(x) + \lambda \intc{t} \T{t-s} \sbr{p w^2(\cdot, s,\theta) - w(\cdot, s,\theta) + (1-p)}(x) \dd{s}. \label{eq:laplace1} 
\end{equation}
We differentiate \eqref{eq:laplace1} with respect to $\theta$. For $k\geq 1$ we have 
\begin{align*}
	w^{(k)}(x,t,\theta) =& (-1)^k \T{t} \sbr{ f^k(\cdot) e^{-\theta f }(\cdot)}(x) \\
	&+ \lambda \intc{t} \T{t-s} \sbr{ p \sum_{l=0}^k \binom{k}{l} w^{(l)}(\cdot, s,\theta) w^{(k-l)}(\cdot, s,\theta) - w^{(k)}(\cdot, s,\theta)}(x) \dd{s}. 
\end{align*}
Note that this differentiation is valid by Fact \ref{fact:aaa} and properties of the Laplace transform (e.g. \cite[Chapter XIII.2]{Feller:1971cr}). We evaluate this expression at $\theta = 0$ 
\begin{align*}
	w^{(k)}(x,t,0) =& (-1)^k \T{t} { f^k(x) } \\
	&+ \lambda \intc{t} \T{t-s} \sbr{ p \sum_{l=1}^{k-1} \binom{k}{l} w^{(l)}(\cdot, s,0) w^{(k-l)}(\cdot, s,0) + (2p-1) w^{(k)}(\cdot, s,0)}(x) \dd{s}. 
\end{align*}
We recall that $\lambda_p=(2p-1)\lambda$. One easily checks that 
\begin{equation}
	w^{(k)}(x,t,0) = (-1)^k e^{\lambda_p t}\T{t} { f^k(x) } + \lambda \intc{t} e^{\lambda_p (t-s)} \T{t-s} \sbr{ p \sum_{l=1}^{k-1} \binom{k}{l} w^{(l)}(\cdot, s,0) w^{(k-l)}(\cdot, s,0)}(x) \dd{s}. \label{eq:moments-formula} 
\end{equation}
By the properties of the Laplace transform the moments are given by 
\begin{equation}
	\evx{} \rbr{\ddp{f}{X_t}}^n = (-1)^{n} w^{(n)}(x,t,0). \label{eq:moments} 
\end{equation}
By the formula above $w^{(n)}(x,t,0)$ can be made meaningful also in situations when the Laplace transform is not well-defined. In particular using standard techniques one can show that \eqref{eq:moments-formula} is valid for $f\in \pspace{}$ (in fact in this paper we will need only moments up to order $4$).

\subsection{Weak convergence facts} In this section we gather simple facts concerning the weak convergence. Let us denote by $\norm{\cdot}{TV}{}$ the total variation norm on the set of probability measures. We have a simple lemma 
\begin{lemma}
	\label{lem:totalVariation} Let $(\Omega, \mathcal{F}, \mathbb{P})$ be a probability space and $A_1\subset A_2 \in \mathcal{F}$ be such that $\pr{A_1} >0$. Let $X$ be a random variable and $\nu_1$, $\nu_2$ be its law conditioned on $A_1, A_2$ respectively. Then
	\[ \norm{\nu_1 - \nu_2}{TV}{} \leq 2 \frac{\pr{A_2} - \pr{A_1}}{ \pr{A_1}^2}. \]
\end{lemma}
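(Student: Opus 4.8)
The plan is to estimate the total variation distance between the two conditional laws by a direct coupling/comparison argument on the probability space. For any measurable set $B$ in the range of $X$, write $\nu_i(B) = \pr{X\in B \mid A_i} = \pr{\{X\in B\}\cap A_i}/\pr{A_i}$. Since $A_1\subset A_2$, I would split $\pr{\{X\in B\}\cap A_2} = \pr{\{X\in B\}\cap A_1} + \pr{\{X\in B\}\cap (A_2\setminus A_1)}$ and subtract, obtaining
\[
\nu_2(B) - \nu_1(B) = \frac{\pr{\{X\in B\}\cap A_1}}{\pr{A_2}} - \frac{\pr{\{X\in B\}\cap A_1}}{\pr{A_1}} + \frac{\pr{\{X\in B\}\cap(A_2\setminus A_1)}}{\pr{A_2}}.
\]
The first two terms combine to $\pr{\{X\in B\}\cap A_1}\big(\frac{1}{\pr{A_2}} - \frac{1}{\pr{A_1}}\big) = -\pr{\{X\in B\}\cap A_1}\cdot\frac{\pr{A_2}-\pr{A_1}}{\pr{A_1}\pr{A_2}}$, whose absolute value is at most $\pr{A_1}\cdot\frac{\pr{A_2}-\pr{A_1}}{\pr{A_1}\pr{A_2}} = \frac{\pr{A_2}-\pr{A_1}}{\pr{A_2}}\leq \frac{\pr{A_2}-\pr{A_1}}{\pr{A_1}}$ since $\pr{A_2}\geq\pr{A_1}$ (using $\pr{\{X\in B\}\cap A_1}\leq\pr{A_1}$). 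The third term is bounded by $\frac{\pr{A_2\setminus A_1}}{\pr{A_2}} = \frac{\pr{A_2}-\pr{A_1}}{\pr{A_2}}\leq\frac{\pr{A_2}-\pr{A_1}}{\pr{A_1}}$ as well.

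Combining the two bounds gives $|\nu_2(B)-\nu_1(B)| \leq 2\frac{\pr{A_2}-\pr{A_1}}{\pr{A_1}}$, and since $B$ was arbitrary and the total variation norm is (up to the factor convention) the supremum of $|\nu_1(B)-\nu_2(B)|$ over measurable $B$, this already yields the claim with $\pr{A_1}$ in the denominator; the stated bound with $\pr{A_1}^2$ is then weaker whenever $\pr{A_1}\leq 1$, hence also holds. (If the paper's convention is $\norm{\nu_1-\nu_2}{TV}{} = 2\sup_B|\nu_1(B)-\nu_2(B)|$, the factor of $2$ is absorbed accordingly; in any case the cruder $\pr{A_1}^2$ bound leaves ample room.)

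There is essentially no real obstacle here — the only thing to be careful about is the normalisation convention for $\norm{\cdot}{TV}{}$ and the bookkeeping of which denominator, $\pr{A_1}$ or $\pr{A_2}$, appears where, using monotonicity $\pr{A_1}\leq\pr{A_2}$ to always downgrade to the smaller one. One could equivalently phrase the whole thing via the coupling that realises $X$ once and conditions on the two events, noting that under $\pr{\cdot\mid A_2}$ the set $A_1$ has probability $\pr{A_1}/\pr{A_2}$, so $\norm{\nu_1-\nu_2}{TV}{}\leq 2\,\pr{A_2\setminus A_1\mid A_2}\cdot(\text{const})$; but the direct estimate above is cleaner and self-contained, so that is the route I would write up.
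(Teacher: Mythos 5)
Your proof is correct and takes essentially the same route as the paper's: both decompose $\nu_2(B)-\nu_1(B)$ into the change-of-denominator term and the term $\pr{\cbr{X\in B}\cap(A_2\setminus A_1)}/\pr{A_2}$, bound each by $(\pr{A_2}-\pr{A_1})/\pr{A_1}$, and take the supremum over $B$. The paper's displayed estimate is likewise the sharper $2(\pr{A_2}-\pr{A_1})/\pr{A_1}$, the stated bound with $\pr{A_1}^2$ being only its weaker consequence, exactly as you observe.
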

\begin{proof}
	Let $B$ be a Borel set, then
	\begin{align*}
		& |\pr{X\in B | A_1} - \pr{X\in B| A_2}| = \Big|\frac{\pr{\cbr{X\in B} \cap A_1} }{\pr{A_1}} - \frac{\pr{\cbr{X\in B} \cap A_2} }{\pr{A_2}}\Big| \\
		= &\Big|\frac{ \pr{\cbr{X\in B} \cap A_1} (\pr{A_2} - \pr{A_1})+(\pr{\cbr{X\in B} \cap A_1} - \pr{\cbr{X\in B} \cap A_2}) \pr{A_1}}{\pr{A_1}\pr{A_2}}\Big| \\
		\leq& 2\frac{\pr{A_2} - \pr{A_1}}{ \pr{A_1}}. 
	\end{align*}
	
	The conclusion holds by the fact that $B$ is arbitrary. 
\end{proof}

Let $\mu_1, \mu_2$ be two probability measures on $\R$, and $\Lip(1)$ be the space of continuous functions $\R\mapsto [-1,1]$ with the Lipschitz constant smaller or equal to $1$. We define 
\begin{equation}
	m(\mu_1, \mu_2) := \sup_{g\in \Lip(1)} |\ddp{g}{\mu_1} - \ddp{g}{\mu_2}|. \label{eq:weakMetric} 
\end{equation}
It is well known that $m$ is a distance equivalent to weak convergence (see e.g. \cite[Theorem 11.3.3]{Dudley:2002}). One easily checks that when $\mu_1, \mu_2$ correspond to two random variables $X_1,X_2$ on the same probability space then we have 
\begin{equation}
	m(\mu_1, \mu_2) \leq \norm{X_1 - X_2}{1}{} \leq \sqrt{\norm{X_1 - X_2}{2}{}}. \label{eq:weakL2estimation} 
\end{equation}

\subsection{Rate of convergence to invariant measure and approximations} \label{sec:approximation}

We will need also estimations of the speed of convergence to the invariant measure. Throughout the proofs we will denote
\[ \tilde{f} := f - \ddp{f}{\eq}. \]
In proofs below it will be convenient to have some additional regularity conditions. Given $f\in\pspace{}$ and $u>0$ we denote 
\begin{equation}
	l_u(x) = \T{u} \tilde{f}(x). \label{eq:lu} 
\end{equation}
One easily checks that $l_u$ is a $C^\infty$ function and its derivatives grow at most polynomially. Moreover, we have $\ddp{l_u}{\eq}=0$. 

\begin{lemma}
	\label{lem:decay} Let $f\in \mathcal{P}(\Rd)$ and $\tilde{f}$ be defined as above. Then there exist constants $C,n>0$ such that for any $t\geq 1$ we have 
	\begin{equation}
		\T{t} \tilde{f}(x) \leq C (1+\norm{x}{}{n}) e^{-\mu t},\quad \T{t} \tilde{f}(0) \leq C e^{-2 \mu t}. \label{eq:decay1} 
	\end{equation}
	Moreover
	\[ \lim_{t\rightarrow +\infty} e^{\mu t}\T{t}\tilde{f}(x) = x\circ \ddp{\grad f }{\eq}, \]
	where $\grad f$ is understood in a weak sense.There exist $\tilde{C},\tilde{n}$ such that 
	\begin{equation}
		e^{\mu t} \T{t}\tilde{f}(x) - x\circ \ddp{\grad f }{\eq} \leq \tilde{C} (1+\norm{x}{}{\tilde{n}}) e^{-\mu t}.\label{eq:decay2} 
	\end{equation}
	Moreover, there exists a function $c:\R_+ \mapsto \R_+$ such that $c(u)\rightarrow 0$ as $u\searrow 0$ and 
	\begin{equation}
		\T{t} (l_u -\tilde{f}) (x) \leq c(u) (1+\norm{x}{}{n}) e^{-\mu t}. \label{eq:strongEstimation} 
	\end{equation}
\end{lemma}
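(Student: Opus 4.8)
The plan is to exploit the explicit representation \eqref{eq:semi-group}, namely $\T{t}f(x) = \ev{} f(xe^{-\mu t} + ou(t)G)$ with $G\sim\eq$, and expand in $x$ around the equilibrium. First I would write $\tilde f = f - \ddp{f}{\eq}$ and note $\ev{}\tilde f(ou(t)G) = \ddp{\tilde f}{\eq_t}$ where $\eq_t$ is the law of $ou(t)G$; since $ou(t)\to 1$, this converges to $\ddp{\tilde f}{\eq}=0$ exponentially fast, with the rate controlled by $1-ou(t)^2 = e^{-2\mu t}$. The point $x=0$ case is exactly this, so \eqref{eq:decay1} at $0$ follows directly once one has a quantitative bound on $|\ddp{\tilde f}{\eq_t} - \ddp{\tilde f}{\eq}|$; for $f\in\pspace{}$ this is a routine Gaussian estimate because $\eq_t$ and $\eq$ differ by a rescaling by a factor $1 + O(e^{-2\mu t})$ and $\tilde f$ has polynomial growth, so the difference of integrals is $O(e^{-2\mu t})$ times a polynomial in the relevant moments (which are finite and bounded for $t\ge 1$).

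For general $x$, I would Taylor-expand $f(xe^{-\mu t}+ou(t)G)$ in the first variable. Writing $f(xe^{-\mu t}+ou(t)G) = f(ou(t)G) + xe^{-\mu t}\circ \grad f(ou(t)G) + R$, taking expectations gives $\T{t}\tilde f(x) = \ddp{\tilde f}{\eq_t} + e^{-\mu t}\, x\circ \ev{}\grad f(ou(t)G) + \ev{} R$. The first term is $O(e^{-2\mu t})$ as above; the second, after replacing $\ev{}\grad f(ou(t)G)$ by $\ddp{\grad f}{\eq}$ (again at cost $O(e^{-2\mu t})$), is exactly the claimed leading term $x\circ\ddp{\grad f}{\eq}$ multiplied by $e^{-\mu t}$, which gives both the limit statement and, combined with a bound on the remainder, the refined estimate \eqref{eq:decay2}. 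The remainder $\ev{} R$ is second order in $xe^{-\mu t}$, hence bounded by $\norm{x}{}{2}e^{-2\mu t}$ times a polynomial-growth factor, which is absorbed into $\tilde C(1+\norm{x}{}{\tilde n})e^{-\mu t}$ after adjusting $\tilde n$. The bound \eqref{eq:decay1} for general $x$ follows a fortiori from \eqref{eq:decay2} plus the trivial bound $|x\circ\ddp{\grad f}{\eq}|\le C(1+\norm{x}{}{})$. One subtlety to handle carefully: derivatives of $f$ are only defined in the distributional sense, so the Taylor expansion above must really be carried out on $\T{u}\tilde f = l_u$ for $u>0$ (which is genuinely $C^\infty$ with polynomially growing derivatives), using the semigroup property $\T{t}\tilde f = \T{t-u}l_u$, and then $\ddp{\grad f}{\eq}$ is interpreted via $\ddp{\grad l_u}{\eq}$ and continuity as $u\to 0$ — the pairing $\ddp{\grad f}{\eq}$ is well-defined since $\eq\in\SD$.

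For the last estimate \eqref{eq:strongEstimation}, I would apply the already-established estimate \eqref{eq:decay1} to the function $l_u - \tilde f$ in place of $\tilde f$: this gives $\T{t}(l_u-\tilde f)(x) \le C_u(1+\norm{x}{}{n})e^{-\mu t}$ with a constant $C_u$ depending on $l_u-\tilde f$ through finitely many of its moments against $\eq$; the content is that $C_u =: c(u)\to 0$ as $u\searrow 0$, which holds because $l_u = \T{u}\tilde f \to \tilde f$ in $\pspace{}$ (pointwise with a uniform polynomial envelope, hence in the relevant weighted norms, by dominated convergence using \eqref{eq:semi-group} and $ou(u)\to 0$). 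The main obstacle, and the step requiring the most care, is the bookkeeping of polynomial growth through the Taylor remainder and through the measure-comparison $\eq_t$ versus $\eq$: one needs uniform-in-$t$ (for $t\ge 1$) control of the $\eq_t$-moments and a clean way to state the remainder bound so that the exponents $n,\tilde n$ can be chosen once and for all depending only on the growth order of $f$. The rest is a sequence of elementary Gaussian computations.
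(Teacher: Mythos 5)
Your plan follows essentially the same route as the paper's proof: use the Gaussian representation \eqref{eq:semi-group}, transfer everything to the smoothed function $l_u=\T{u}\tilde f$ (whose derivatives grow polynomially), expand to first resp.\ second order in $xe^{-\mu t}$ and in $1-ou(t)\ceq e^{-2\mu t}$ to get \eqref{eq:decay1}, the limit and \eqref{eq:decay2}, identify the limit with the weak pairing $x\circ\ddp{\grad f}{\eq}$ by letting $u\searrow 0$, and prove \eqref{eq:strongEstimation} by showing that the constant attached to $l_u-\tilde f$ vanishes as $u\searrow 0$ — which is exactly what the paper does by bounding $\grad(l_{u+1}-l_1)$ through the kernel $g_u$. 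The one imprecision is your claim that this constant depends on $l_u-\tilde f$ only through finitely many moments against $\eq$; it really depends on a polynomially weighted bound for the smoothed gradient $\grad \T{u_0}(l_u-\tilde f)$, but your own smoothing/dominated-convergence remark shows that this quantity also tends to $0$, so the argument goes through as in the paper.
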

\begin{proof}
	Let us recall equation \eqref{eq:semi-group}. We write 
	\begin{equation}
		e^{\mu (t+u)}\T{t+u} \tilde{f}(x)= e^{\mu u} e^{\mu t}\T{t} {l_u}(x) = e^{\mu u}\intr e^{\mu t}({l_u}(x e^{-\mu t} + ou(t) y ) -{l_u}(y)) \eq(y) \dd{y}.\label{eq:kuku1} 
	\end{equation}
	Using the mean value theorem we get 
	\begin{equation}
		h(x,y,t):=e^{\mu t}(l_u(x e^{-\mu t} + ou(t)y ) -l_u(y)) = (x + e^{\mu t} (ou(t)-1)y) \circ \grad l_u(x_0),\label{eq:kuku2} 
	\end{equation}
	where $x_0$ is some point on the interval joining $y$ and $x e^{-\mu t} + ou(t)y$. From this representation we obtain $|h(x,y,t)|\cleq \max(\norm{x}{}{n},\norm{y}{}{n})$ and $|h(0,y,t)|\leq e^{-\mu t} \norm{y}{}{n}$. This is enough to show (\ref{eq:decay1}).
	
	We notice that $h(x,y,t) \rightarrow \sum_{i=1}^d x_i \frac{
	\partial l_u}{
	\partial y_i}$ point-wise as $t\rightarrow +\infty$. This, together with the Lebesgue dominated convergence yields
	\[ \lim_{t\rightarrow +\infty} e^{\mu t}\T{t}\tilde{f}(x) = e^{\mu u} \sum_{i=1}^d x_i \ddp{\frac{
	\partial l_u}{
	\partial y_i}}{\eq} = e^{\mu u} \sum_{i=1}^d x_i \ddp{l_u}{\frac{
	\partial \eq}{
	\partial y_i}} = (*) \]
	The calculations above are valid for any choice of $u$, in particular, letting $u\searrow 0$ we obtain
	\[ (*) = \sum_{i=1}^d x_i \ddp{f}{\frac{
	\partial \eq}{
	\partial y_i}} = \sum_{i=1}^d x_i \ddp{\frac{
	\partial f}{
	\partial y_i}}{\eq} = x\circ \ddp{\grad f }{\eq}. \]
	For the sake of notational simplicity \eqref{eq:decay2} will be proved for $d=1$. By the above calculations
	\[ e^{\mu (t+u)} \T{t+u}\tilde{f}(x) - x \ddp{ f' }{\eq} = e^{\mu u} \rbr{ e^{\mu t} \T{t}l_u(x) - x \ddp{ l_u' }{\eq}}. \]
	Let us now treat the inner expression using \eqref{eq:semi-group}: 
	\begin{align*}
		e^{\mu t} \T{t}l_u(x) - x \ddp{ l_u' }{\eq} &= \ev{} \rbr{e^{\mu t}\rbr{l_u(x e^{-\mu t} + ou(t)G) - l_u(G)} - x l_u'(G) }\\
		&= \ev{} \rbr{e^{\mu t} \int^{x e^{-\mu t} + ou(t)G}_G l_u'(y) \dd{y} - x l_u'(G) } \\
		&= \ev{} \rbr{e^{\mu t} \int^{x e^{-\mu t} + ou(t)G}_G (l_u'(y) - l_u'(G)) \dd{y}} + {e^{\mu t} (ou(t)-1) \ev{} G l_u'(G)}. 
	\end{align*}
	The second term is easily upper-bounded by $C e^{-\mu t}$, for some $C>0$. The first one can be rewritten as
	\[ \ev{} e^{\mu t} \int^{x e^{-\mu t} + ou(t)G}_G \int_G^{y} l_u''(z) \dd{z} \dd{y} \leq \ev{} e^{\mu t} (x e^{-\mu t} + (ou(t)-1)G)^2 \sup_{z \in [G, xe^{-\mu t} + ou(t) G]} |l_u''(z)| = (*). \]
	By the discussion at the beginning of the proof we know that $l_u''$ grows polynomially, i.e. there exists $n$ such that
	\[ (*) \cleq e^{-\mu t}\ev{} (x + e^{\mu t} (ou(t)-1)G)^2 \max(\norm{x}{}{} + \norm{G}{}{} )^n \cleq e^{-\mu t} (1+\norm{x}{}{n}). \]
	This is enough to prove \eqref{eq:decay2}.
	
	Now, we need an estimation of $e^{\mu t}\T{t} (l_u - f)(x)$ which takes into account $u$. Using the same trick as before it is enough to prove an estimation for $e^{\mu t}\T{t} (l_{u+1} - l_1)(x)$. Denoting $k_u(x) = l_{u+1}(x) - l_1(x)$ we have
	\[ e^{\mu t}\T{t}k_u(x) = \intr e^{\mu t}({k_u}(x e^{-\mu t} + ou(t) y ) -{k_u}(y)) \eq(y) \dd{y}. \]
	Using the mean value theorem we get $$h(x,y,t):=e^{\mu t}(l_u(x e^{-\mu t} + ou(t)y ) -l_u(y)) = (x + e^{\mu t} (ou(t)-1)y) \circ \grad l_u(x_0).$$ Further following arguments used in \eqref{eq:kuku1} and \eqref{eq:kuku2} and after them we conclude that in order to obtain \eqref{eq:strongEstimation} we need to estimate $\grad (l_{u+1} - l_1)$. For simplicity we will provide the details for $d=1$. We have
	\[ \rbr{(l_{u+1} - l_1)(x)}' = \rbr{\int_{\R} (l_1(x-y)- l_1(x) ) g_u(y) \dd{y} }' = {\int_{\R} (l_1'(x-y) -l'(x)) g_u(y) \dd{y} }=(*). \]
	Using the mean value theorem there exists a function $x_0(x,y)$ such that $x_0(x,y)\in [x-y,x]$ and
	\[ (*) = \int_{\R} y l''_1(x_0(x,y)) g_u(y) \dd{y} =(**). \]
	The function $l''_1$ is grows at most polynomially. Therefore for some $c,n$ we have
	\[ |(**)| \leq c \intr |y| (1+\norm{x}{}{n} + \norm{y}{}{n}) g_u(y) \dd{y} \leq c(u) (1+\norm{x}{}{n}), \]
	where $c(u)$ is some function fulfilling the required conditions. 
\end{proof}

Let us denote $Y_t(f) := F_t \rbr{\ddp{X_t}{\tilde{f} } - |X_t| \ddp{\tilde{f}}{\eq} }$, where $F_t= e^{-(\lambda_p/2)t }$ when $\lambda_p<2\mu$; $F_t= e^{-(\lambda_p/2)t }t^{-1/2}$ when $\lambda_p=2\mu$; $F_t= e^{-(\lambda_p-\mu )t }$ when $\lambda_p>2\mu$. Let us assume that we proved 
\begin{equation}
	\limsup_{t\rightarrow +\infty} \ev{} \rbr{Y_t(f) - Y_t(l_u)}^2 = c(u), \label{eq:assumptionUniform} 
\end{equation}
where $c(u)$ is some function such that $c(u) \searrow 0$ as $u\searrow 0$. Let $L(l_u), L(f)$ be the laws of the limits asserted in Theorem \ref{thm:clt1} and Theorem \ref{thm:cltCritical}.

Assume that we know already that convergences in Theorem \ref{thm:clt1} and Theorem \ref{thm:cltCritical} hold for any $l_u,u>0$. One can check that in either case
\[ m(L(l_u), L(f)) \rightarrow 0,\quad \text{ as }u \searrow 0. \]
Let us fix $\epsilon>0$ and choose $u>0$ such that $c(u)\leq \epsilon^2$ and $m(L(l_u), L(f))\leq \epsilon$. We can choose $T_1>0$ such that for any $t>T_1$ we have $m(L(l_u), \mathcal{L}(Y_t(l_u))) \leq \epsilon$ (where $\mathcal{L}$ denotes the law of given random variable). Let $T_2$ be such that for any $t>T_2$ the $L_2$ norm in \eqref{eq:assumptionUniform} is smaller than $4\epsilon^2$. Now, for $t>\max(T_1,T_2)$ we have $m(L(f), Y_t(f)) \leq 4\epsilon$ and therefore all of theorems mentioned hold also for $f$. The case of Theorem \ref{thm:clt2} follows similarly, but one has to use convergences in probability and the $L^2$-norm.

Let us sum up this section. In the proofs below we may work with additional assumption that $f$ is a smooth function and its derivatives grow at most polynomially fast if only we show \eqref{eq:assumptionUniform}.

\subsection{LLN and CLT for small branching rate} 

\label{sub:proof_of_theorem_thm:clt1}

First we prove the following fact 
\begin{fact}
	\label{fact:momentsSubcritical} Let $\cbr{X_t}_{t\geq 0}$ be the OU branching system and $\lambda_p < 2 \mu$ and let $f\in \pspace{}$. Then
	
	\[ \evx{} \rbr{e^{-(\lambda_p/2) t} \ddp{X_t}{\tilde{f}}} \rightarrow 0 \quad \text{ as } t\rightarrow +\infty. \]
	\begin{equation}
		\evx{} \rbr{e^{-(\lambda_p/2) t} \ddp{X_t}{\tilde{f}}}^2 \rightarrow \sigma_f^2, \quad \var_x\rbr{e^{-(\lambda_p/2) t} \ddp{X_t}{\tilde{f}}} \rightarrow \sigma_f^2, \label{eq:subcriticalSecondMoments} 
	\end{equation}
	where $\sigma_f^2$ is the same as in \eqref{eq:sigmaf}. Moreover,
	\[ \sup_t \evx{} \rbr{e^{-(\lambda_p/2)t}\rbr{\ddp{X_t}{\tilde{f}} - \evx{}\ddp{X_t}{\tilde{f}}} }^4 <+\infty. \]
\end{fact}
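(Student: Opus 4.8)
The plan is to compute the first two moments explicitly from the recursive moment formula \eqref{eq:moments-formula} and then extract the asymptotics. Recall that $\evx{}\ddp{X_t}{\tilde f} = e^{\lambda_p t}\T{t}\tilde f(x)$, so by Lemma \ref{lem:decay} we have $e^{-(\lambda_p/2)t}\evx{}\ddp{X_t}{\tilde f} \ceq e^{(\lambda_p/2)t}(1+\norm{x}{}{n})e^{-\mu t} \to 0$ since $\lambda_p<2\mu$; this disposes of the first claim. For the second moment, I would apply \eqref{eq:moments-formula} with $k=2$, which gives
\[
\evx{}\rbr{\ddp{X_t}{\tilde f}}^2 = e^{\lambda_p t}\T{t}\tilde f^2(x) + 2\lambda p\intc{t} e^{\lambda_p(t-s)}\T{t-s}\sbr{(w^{(1)}(\cdot,s,0))^2}(x)\ds,
\]
and $w^{(1)}(y,s,0) = -\evx{}\ddp{X_s}{\tilde f} = -e^{\lambda_p s}\T{s}\tilde f(y)$, so $(w^{(1)})^2 = e^{2\lambda_p s}(\T{s}\tilde f)^2$. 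Multiplying by $e^{-\lambda_p t}$ and changing variables ($s \mapsto$ the integration from the present back), the expression becomes
\[
e^{-\lambda_p t}\evx{}\rbr{\ddp{X_t}{\tilde f}}^2 = \T{t}\tilde f^2(x) + 2\lambda p\intc{t}\T{t-s}\sbr{\rbr{e^{(\lambda_p/2)s}\T{s}\tilde f}^2}(x)\,e^{\lambda_p(t-s)}e^{-\lambda_p(t-s)}\ds,
\]
wait — more carefully, after the substitution one recognizes $\intc{t} e^{\lambda_p s}\T{t-s}[(\T{s}\tilde f)^2](x)\ds$ times $e^{-\lambda_p t}$, which is $\intc{t}\T{t-s}[(e^{(\lambda_p/2)s}\T{s}\tilde f)^2](x)\,e^{-\lambda_p(t-s)}\ds$; since $\T{t-s}$ applied to a polynomially bounded function converges to its $\eq$-integral and $e^{-\lambda_p(t-s)}$ is bounded, I would pass to the limit via dominated convergence to get exactly $\ddp{\eq}{\tilde f^2} + 2\lambda p\inti\ddp{\eq}{(e^{(\lambda_p/2)s}\T{s}\tilde f)^2}\ds = \sigma_f^2$. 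The finiteness $\sigma_f^2<\infty$ follows from the exponential decay $e^{(\lambda_p/2)s}\T{s}\tilde f \ceq e^{(\lambda_p/2 - \mu)s}$ in Lemma \ref{lem:decay}, which is square-integrable precisely because $\lambda_p<2\mu$. The variance statement then follows since the square of the mean tends to $0$.

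For the uniform fourth-moment bound I would again use \eqref{eq:moments-formula}, now with $k=3$ and $k=4$, to express the fourth centered moment. It is cleaner to work directly with centered quantities: set $Z_t := \ddp{X_t}{\tilde f} - \evx{}\ddp{X_t}{\tilde f}$ and derive (again from the branching decomposition, or by manipulating \eqref{eq:moments-formula}) a recursive bound for $\evx{} Z_t^4$ in terms of $\int_0^t e^{\lambda_p(t-s)}\T{t-s}[\,\text{products of lower moments}\,](x)\ds$. The key cross terms involve $(\evx{}Z_s^2)^2$ and $\evx{}Z_s^2 \cdot (\text{mean})^2$ etc., all of which are $O(e^{2\lambda_p s})$ (from the already-established second moment) or smaller. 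After multiplying by $e^{-2\lambda_p t}$, the dominant contribution is $e^{-2\lambda_p t}\intc{t} e^{\lambda_p(t-s)}\T{t-s}[O(e^{2\lambda_p s})]\ds \ceq e^{-2\lambda_p t}\intc{t} e^{\lambda_p(t-s)}e^{2\lambda_p s}\ds \ceq e^{-2\lambda_p t}\cdot e^{2\lambda_p t} = O(1)$, uniformly in $t$ and $x$ on bounded sets; one must also track the $\T{}$-action on the polynomially growing prefactors, which stays bounded because $\T{t-s}$ contracts toward the (finite) equilibrium integrals.

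The main obstacle is bookkeeping: organizing the fourth-moment recursion so that every term is visibly $O(e^{2\lambda_p t})$ after the right cancellation, and in particular handling the $\T{}$-semigroup acting on products of functions like $(\T{s}\tilde f)^2$ whose polynomial growth in $x$ could a priori blow up — here one leans on Lemma \ref{lem:decay} to see that $\T{s}\tilde f$ has controlled growth uniformly in $s$, and on Fact \ref{fact:aaa} only insofar as the Galton--Watson fourth moment bound is implicitly what makes \eqref{eq:moments-formula} legitimate for $f\in\pspace{}$. Since the problem reduces, after reducing to smooth $f$ via the discussion in Section \ref{sec:approximation}, to these Gronwall-type integral estimates, no genuinely new idea is needed beyond careful estimation, so I would present the $k=2$ computation in full and then indicate the inductive estimate for $k=3,4$ schematically.
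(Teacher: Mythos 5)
Your argument is essentially the paper's own proof: both compute $w^{(k)}(x,t,0)$ for $k=1,2$ from \eqref{eq:moments-formula}, identify $e^{-\lambda_p t}\evx{}\rbr{\ddp{X_t}{\tilde f}}^2=\T{t}\tilde f^2(x)+2\lambda p\intc{t}\T{t-s}\sbr{\rbr{e^{(\lambda_p/2)s}\T{s}\tilde f}^2}(x)\ds$, pass to the limit by dominated convergence with the dominating bound $e^{(\lambda_p-2\mu)s}(1+\norm{x}{}{2n})$ from \eqref{eq:decay1} (integrable in $s$ exactly because $\lambda_p<2\mu$), and then iterate: $|w'''(x,t,0)|\cleq e^{(3/2)\lambda_p t}(1+\norm{x}{}{3n})$, so the integrand $w''^2+w'''\,w'$ is $O(e^{2\lambda_p s})$ and the normalised fourth moment is $O(1)$, which is precisely your bookkeeping for $k=3,4$. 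Two small corrections. First, your intermediate rewriting of the second moment carries a spurious factor $e^{-\lambda_p(t-s)}$; taken literally, dominated convergence applied to that expression would annihilate the integral term and yield only $\ddp{\eq}{\tilde f^2}$ in the limit — the correct identity, as in your first display, has no such factor. Second, do not appeal to the reduction of Section \ref{sec:approximation} here: that reduction is legitimate only after \eqref{eq:assumptionUniform} is verified, which in the paper is done via \eqref{eq:subcriticalSecondMoments}, i.e.\ via the present fact, so invoking it would be circular; fortunately no smoothness of $f$ is needed, since \eqref{eq:moments-formula} and Lemma \ref{lem:decay} apply directly to every $f\in\pspace{}$.
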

\begin{proof}
	First we note that by (\ref{eq:moments-formula}) and Lemma \ref{lem:decay} (ineq. (\ref{eq:decay1})),
	\[ |w'(x,t,0)| \cleq e^{(\lambda_p -\mu)t} (1+ \norm{x}{}{n}) \leq e^{(\lambda_p/2)t} (1+ \norm{x}{}{n}), \]
	which, by \eqref{eq:moments}, implies the first assertion. Using (\ref{eq:moments-formula}) and \eqref{eq:moments} again we calculate the second moment 
	\begin{multline*}
		\evx{} \rbr{e^{-(\lambda_p/2)t}\ddp{X_t}{\tilde{f}}}^2 = \T{t} \tilde{f}^2(x) + 2\lambda p e^{-\lambda_p t} \intc{t} e^{\lambda_p (t-s)} \T{t-s} \sbr{\rbr{ e^{\lambda_p s}\T{s} \tilde{f}(\cdot)}^2}(x) \dd{s} \\
		= \T{t} \tilde{f}^2(x) + 2\lambda p \intc{t} \T{t-s} \sbr{\rbr{ e^{(\lambda_p/2) s}\T{s} \tilde{f}(\cdot)}^2}(x) \dd{s}. 
	\end{multline*}
	By \eqref{eq:decay1} in Lemma \ref{lem:decay} the integrand in the last expression can be estimated as follows
	\[ \T{t-s} \sbr{\rbr{ e^{(\lambda_p/2) s}\T{s} \tilde{f}(\cdot)}^2}(x) \cleq e^{(\lambda_p - 2\mu) s} \T{t-s} \sbr{(1+ \norm{\cdot}{}{n} )^2}(x). \]
	Using representation \eqref{eq:OUrep} it can be checked that for any $t\geq 0$ we have $\T{t} \sbr{(1+ \norm{\cdot}{}{n})^2}(x) \cleq (1+ \norm{x}{}{2n})$. The dominated Lebesgue theorem implies \eqref{eq:subcriticalSecondMoments}. We also conclude that for any $t\geq 0$,
	\[ w''(x,t,0) \cleq e^{\lambda_p t}(1+ \norm{x}{}{2n}). \]
	Similarly we investigate $w'''(x,t,0)$. By \eqref{eq:moments-formula} we have
	\[ |w'''(x,t,0)| \cleq e^{\lambda_p t}\T{t} { |\tilde{f}|^3(x) } + \left.\intc{t} e^{\lambda_p (t-s)} \T{t-s} \sbr{ w''(\cdot, s,0) w'(\cdot, s,0)}(x) \dd{s} .\right. \]
	Using the estimates obtained already and the fact that $\T{t}\sbr{(1+\norm{\cdot}{}{n})^3}(x)\cleq (1+\norm{x}{}{3n})$
	\[ |w'''(x,t,0)| \cleq (1+\norm{x}{}{3n})e^{\lambda_p t} + e^{\lambda_p t} \left.\intc{t} e^{\lambda_p/2 s} \T{t-s} \sbr{ 1+ \norm{\cdot}{}{3n} }(x) \dd{s} \right. \cleq e^{(3/2) \lambda_p t} (1+ \norm{x}{}{3n}). \]
	Finally, we will also need the fourth moment. By \eqref{eq:moments-formula} and the estimates above we get 
	\begin{align*}
		\evx{} &\rbr{e^{-(\lambda_p/2)t}\ddp{X_t}{\tilde{f}}}^4 \\
		&\cleq e^{-\lambda_p t}\T{t} { \tilde{f}^4(x) } + e^{-2\lambda_p t} \intc{t} e^{\lambda_p (t-s)} \T{t-s} \sbr{ w''(\cdot,s,0)^2 + w'''(\cdot,s,0) w'(\cdot,s,0) }(x) \dd{s} \\
		&\cleq e^{-\lambda_p t} (1+\norm{x}{}{4n})+ e^{-\lambda_p t} \intc{t} e^{\lambda_p s} \T{t-s} \sbr{ (1 + \norm{\cdot}{}{4n}) }(x) \dd{s} \cleq (1 + \norm{x}{}{4n}). 
	\end{align*}
	It is now easy to get the last assertion of the fact. 
\end{proof}
Now we will prove representation \eqref{eq:sigmafSubcriticalHermite}. We have $\T{s} H_{i_1, i_2, \ldots, i_d}(x) = e^{-(i_1+i_2 +\ldots +i_d)\mu t} H_{i_1,i_2,\ldots,i_d} (x)$. Therefore 
\begin{multline*}
	\sigma_f^2 := \ddp{\eq}{\rbr{ \sum_{i_1=0, i_2 =0,\ldots, i_d=0}^{+\infty} f_{i_1, i_2, \ldots, i_d} H_{i_1, i_2, \ldots, i_d} }^2} \\
	+ 2\lambda p \inti \ddp{\eq}{\rbr{ e^{(\lambda_p/2) s} \sum_{i_1=0, i_2 =0,\ldots, i_d=0}^{+\infty} e^{-\mu(i_1 +i_2+\ldots +i_d)s}f_{i_1, i_2, \ldots, i_d} H_{i_1, i_2, \ldots, i_d}(\cdot)}^2} \dd{s} \\
	= \sum_{i_1=0, i_2 =0,\ldots, i_d=0}^{+\infty} f_{i_1, i_2, \ldots, i_d}^2 + 2\lambda p \sum_{i_1=0, i_2 =0,\ldots, i_d=0}^{+\infty} f_{i_1, i_2, \ldots, i_d}^2 \inti e^{(\lambda_p-2(i_1+i_2+\ldots+i_d) \mu) s} \dd{s}. 
\end{multline*}
Now \eqref{eq:sigmafSubcriticalHermite} follows easily. We are ready for 
\begin{proof}
	[Proof of Theorem \ref{thm:LLN} (sketch)] The proof of \cite[Theorem 6]{Englander:2007im} can be checked to hold also for the branching mechanism introduced in our paper. We will show now that the almost sure convergence holds also for bounded functions which are not compactly supported. We decompose $f = f^+ - f^-$, where $f^+(x) = f(x) 1_{ \cbr{f(x) \geq 0}}$ and $f^+(x) = -f(x) 1_{ \cbr{f(x) < 0}}$. It is enough to prove the claim separately for $f^+$ and $f^-$, hence we assume that $f \geq 0$. For $n \in \mathbb{N}$ we consider functions $h_n(x) := \min(\max(n - x, 0),1) $ and $g_n:\R^d \mapsto [0,1]$ given by $g_n(x):= h_n( \norm{x}{}{})$. By \cite[Theorem 6]{Englander:2007im} we know that $e^{-\lambda_p t} \ddp{X_t}{g_n} \rightarrow V_\infty \ddp{g_n}{\eq} \: a.s.$, we know also that $e^{-\lambda_p t} |X_t| \rightarrow V_\infty \: a.s. $ Therefore $e^{-\lambda_p t} \ddp{X_t}{(1-g_n)} \rightarrow V_\infty \ddp{\eq}{(1-g_n)}\: a.s$. Now we estimate
	\[ \ddp{X_t}{ f g_n} \leq \ddp{X_t}{ f} \leq \ddp{X_t}{ f g_n} + \norm{f}{\infty}{} \ddp{X_t}{(1-g_n)}. \]
	Using the previous considerations and the fact that $f g_n$ has compact support we get with probability one
	\[ \ddp{ f g_n}{\eq} V_\infty \leq \liminf_{t\nearrow +\infty} e^{-\lambda_p t} \ddp{X_t}{ f} \leq \limsup_{t\nearrow +\infty} e^{-\lambda_p t} \ddp{X_t}{ f} \leq \ddp{ f g_n}{\eq} V_\infty + \norm{f}{\infty}{} \ddp{ (1-g_n)}{\eq} V_\infty. \]
	To conclude, we observe that $\ddp{ f g_n}{\eq} \rightarrow \ddp{ f}{\eq} $ and $\ddp{ f g_n}{\eq} + \norm{f}{\infty}{} \ddp{ (1-g_n)}{\eq} \rightarrow \ddp{f}{\eq}$ as $n\rightarrow +\infty$.\\
	Let now $f\in \pspace{}$ and $\tilde{f} := f - \ddp{f}{\eq}$. Calculating similarly as in the proof of Fact \ref{fact:momentsCritical} we get 
	\begin{equation}
		e_t:=\evx{} \rbr{e^{-\lambda_p t}\ddp{X_t}{\tilde{f}}}^2 \cleq e^{-\lambda_p t} \T{t} \tilde{f}^2(x) + \intc{t} e^{-\lambda_p s} \T{s} \sbr{\rbr{ \T{t-s} \tilde{f}(\cdot)}^2}(x) \dd{s}. \label{eq:LLNL2Convergence} 
	\end{equation}
	Obviously for any $x\in \R^d $ we have $\T{t} \tilde{f}(x) \rightarrow 0$, moreover $|\T{t} \tilde{f}(x)|\cleq 1+\|x\|^n$. Standard considerations using the Lebesgue dominated convergence theorem yield that $e_t \rightarrow 0$. Therefore $e^{-\lambda_p t} \ddp{X_t}{\tilde{f}} \rightarrow^P 0$, and further $e^{-\lambda_p t} \ddp{X_t}{f} - e^{-\lambda_p t}|X_t|\ddp{f}{\eq} \rightarrow^P 0 $, which concludes the proof. 
\end{proof}

In the proofs below we will use the term ``X is asymptotically equivalent Y'' to denote the situation that $X_t - Y_t \rightarrow 0$ in probability (equivalently in law) as $t\rightarrow +\infty$. Now we are ready for 
\paragraph{Proof of Theorem \ref{thm:clt1}.} We start with the following random vector
\[ Z_1(t):=\rbr{e^{-\lambda_p t} |X_t|, e^{-(\lambda_p/2)t} (|X_t| - e^{\lambda_p t} V_\infty), e^{-(\lambda_p/2)t} \ddp{X_t}{\tilde{f}} }. \]
Let $n\in \mathbb{N}$ to be fixed later and let us write
\[ Z_1(n t):=\rbr{e^{- n \lambda_p t} \ddp{X_{nt}}{1}, e^{-(n\lambda_p/2)t} (|X_{nt}| - e^{n\lambda_p t} V_\infty) , e^{-(n\lambda_p/2)t} \sum_{i=1}^{|X_t|} \ddp{X^{i,t}_{(n-1)t}}{\tilde{f}} }, \]
where $\cbr{X^{i,s}_t}_t$ denotes the subsystem originating from the particle $X_s(i)$. We know that with probability one $e^{-\lambda_p t} |X_t| \rightarrow V_\infty$. Therefore $e^{-n\lambda_p t} |X_{nt}| -e^{-\lambda_p t} |X_{t}| \rightarrow 0\:a.s.$ as $t\rightarrow +\infty$. Let us consider the second term 
\begin{multline*}
	|X_{nt}| - e^{n\lambda_p t} V_\infty = |X_{nt}| - e^{n\lambda_p t} \lim_{s\rightarrow +\infty } e^{-\lambda_p (nt+s)}|X_{nt+s}| = |X_{nt}| - e^{n\lambda_p t} \lim_{s\rightarrow +\infty } e^{-\lambda_p (s + nt)} \sum_{i=1}^{|X_{nt}|} |X^{i,nt}_s|\\
	= |X_{nt}| - \sum_{i=1}^{|X_{nt}|} \lim_{s\rightarrow +\infty } e^{-\lambda_p s } |X^{i,nt}_s| = \sum_{i=1}^{|X_{nt}|} \rbr{1 - V_\infty^i}, 
\end{multline*}
where $V_\infty^i$ are independent copies of $V_\infty$ (note that formally they depend on $t$, however we will suppress this fact in the notation). We couple each $X^{i,t}$ with the branching system starting from one particle located at $0$. To this end we use the same methods as in Fact \ref{fact:coupling} for particles movements and retain the branching structure. The coupled system is denoted by $\tilde{X}^{i,t}$. Let us write 
\begin{multline*}
	H:=\left| e^{-(n\lambda_p/2)t} \sum_{i=1}^{|X_t|} \ddp{X^{i,t}_{(n-1)t}}{\tilde{f}} - e^{-(n\lambda_p/2)t} \sum_{i=1}^{|X_t|} \ddp{\tilde{X}^{i,t}_{(n-1)t}}{\tilde{f}}\right| \\
	\leq \left|e^{-(n\lambda_p/2)t} \sum_{i=1}^{|X_t|} 1_{\norm{X_t(i)}{}{}<t} \sum_{j=1}^{|X^{i,t}_{(n-1)t}|}\Big( \tilde{f}(X^{i,t}_{(n-1)t}(j)) -\tilde{f}(\tilde{X}_{(n-1)t}^{i,t}(j))\Big) \right| \\+ e^{-(n\lambda_p/2)t} \sum_{i=1}^{|X_t|} 1_{\norm{X_t(i)}{}{}\geq t} \left|\ddp{X^{i,t}_{(n-1)t}}{\tilde{f}} +\ddp{\tilde{X}^{i,t}_{(n-1)t}}{\tilde{f}} \right|. 
\end{multline*}
By the approximation argument from Section \ref{sec:approximation} we may assume that $\tilde{f}$ and its derivative can be upper-bounded by a polynomial of order $k \in \mathbb{N}$. Indeed \eqref{eq:assumptionUniform} holds by \eqref{eq:subcriticalSecondMoments} and easy calculations. 

By Fact \ref{fact:coupling} and the mean value theorem we have $|\tilde{f}(X^{i,t}_{(n-1)t}(j)) -\tilde{f}(\tilde{X}_{(n-1)t}^{i,t}(j))| 1_{\norm{X_t(i)}{}{}\leq t} \cleq t e^{-(n-1)\mu t} (1+\norm{\tilde{X}_{(n-1)t}^{i,t}(j)}{}{})^k$. Using the conditional expectation with respect to $X_t$, \eqref{eq:moments} and $\T{t} (1+\norm{\cdot}{}{k})(x)\cleq (1+\norm{x}{}{k})$ we get 
\begin{align*}
	\evx{} H \cleq& e^{-(n\lambda_p/2)t}\evx{} \sum_{i=1}^{|X_t|}(1+\norm{X_t(i)}{}{})^k t e^{-(n-1)\mu t} e^{(n-1)\lambda_p t} \\
	&+ e^{-(n\lambda_p/2)t}\evx{} \sum_{i=1}^{|X_t|} 1_{\norm{X_t(i)}{}{}\geq t}(1+\norm{X_t(i)}{}{})^k e^{(n-1)\lambda_p t} \\
	\cleq& e^{(n\lambda_p/2)t} t e^{-(n-1)\mu t} + e^{((n-2)\lambda_p/2)t} \T{t} ((1+\norm{\cdot}{}{})^k 1_{ \norm{\cdot}{}{} >t})(x). 
\end{align*}
There exists $n_0>0$ such that for any $n>n_0$ we have $n\lambda_p < 2\mu (n-1)$. Using the Schwarz inequality, the second term can be estimated by $\sqrt{\T{t} (1+\norm{\cdot}{}{k})^2 (x)}\sqrt{\T{t} 1_{ \norm{\cdot}{}{}>t} (x)}$. The Ornstein-Uhlenbeck process has Gaussian marginals with bounded mean and variance therefore $\T{t} 1_{ \norm{\cdot}{}{}>t} (x) \cleq e^{-c t^2}$ for a certain $c>0$. We may conclude that $\evx{} H \rightarrow 0$ as $t\rightarrow +\infty$. We notice that $\tilde{X}^{i,t}$'s are i.i.d. branching particle systems; indeed the only dependence among ${X}^{i,t}$ is by the initial condition. Let us denote $Z_t^i := e^{-((n-1)\lambda_p/2) t } \ddp{\tilde{X}^{i,t}_{(n-1)t}}{\tilde{f}}$ and $z_t^i := \ev{}_0 Z_t^i$ (we will write $\ev{}_0$ to underline the fact that $\tilde{X}^{i,t}$ starts from $0$).

By \eqref{eq:moments} and Lemma \ref{lem:decay} one checks easily that for $n$ large enough
\[ \evx{} e^{-(\lambda_p/2)t} \sum_{i=1}^{|X_t|} |z^i_t| = e^{(\lambda_p/2)t} |\ev{}_0 Z_t^1| \cleq e^{(n\lambda_p/2)t} e^{-(n-1)\mu t} \rightarrow 0, \quad \text{as } t\rightarrow +\infty. \]
Using the facts above we conclude that there exists $n$ such that $Z_1(nt)$ is asymptotically equivalent to
\[ Z_2(t) = \rbr{e^{-\lambda_p t} |X_t|, e^{-(n\lambda_p/2)t} \sum_{i=1}^{|X_{nt}|} \rbr{1- V_\infty^i}, e^{-(\lambda_p/2)t} \sum_{i=1}^{|X_t|} (Z^i_t - z^i_t) }. \]
Let us now denote
\[ Z_3(t):=\rbr{e^{-\lambda_p t} |X_t|, {|X_{nt}|}^{-1/2} \sum_{i=1}^{|X_{nt}|} \rbr{1- V_\infty^i}, {|X_{t}|}^{-1/2} \sum_{i=1}^{|X_t|} (Z^i_t - z^i_t) }, \]
which we will consider conditionally on the event $\cbr{|X_t| \neq 0}$ (here and below we adopt the convention that $a/0 = 0$, $Z_3$ is well defined then). The corresponding expected value is denoted by $\evx{}_{,|X_t|\neq 0}$. Let us denote the characteristic function of $Z_3$ 
\begin{align*}
	& \chi_1(\theta_1, \theta_2,\theta_3;t) := \\
	&\evx{}_{,|X_t|\neq 0} \exp\cbr{i \theta_1 e^{-\lambda_p t} |X_t| + i \theta_2 {|X_{nt}|}^{-1/2} \sum_{i=1}^{|X_{nt}|} \rbr{1- V_\infty^i}+ i\theta_3|X_t|^{-1/2} \sum_{i=1}^{|X_t|} (Z^i_t - z^i_t) }. 
\end{align*}
Conditioning on $X_{nt}$ and using the Markov property we check that variables $1-V_\infty^i$ are i.i.d, moreover they are independent of the system before time $nt$. We denote their common characteristic function by $h$.

We have
\[ \chi_1(\theta_1, \theta_2,\theta_3;t) = \evx{}_{,|X_t|\neq 0} \exp\cbr{i \theta_1 e^{-\lambda_p t} |X_t| + i\theta_3|X_t|^{-1/2} \sum_{i=1}^{|X_t|} (Z^i_t - z^i_t) } h \rbr{\theta_2/\sqrt{|X_{nt}|}} ^{|X_{nt}|}. \]
By Fact \ref{fact:aaa} and the central limit theorem we write $h(\theta_2/\sqrt{n})^n \rightarrow e^{-\theta_2^2/(2 \sigma_V^2)}$, where $\sigma_V^2=\frac{1}{2p-1}$. Further we will work with 
\begin{align*}
	& \chi_2(\theta_1, \theta_2,\theta_3;t) := \\
	&\evx{}_{,|X_t|\neq 0} \exp\cbr{i \theta_1 e^{-\lambda_p t} |X_t| + i\theta_3|X_t|^{-1/2} \sum_{i=1}^{|X_t|} (Z^i_t - z^i_t) } h \rbr{\theta_2/\sqrt{e^{(n-1)\lambda_p t}|X_{t}|}}^{e^{(n-1)\lambda_p t}|X_{t}|}. 
\end{align*}
The limit of $\chi_1$ is the same as the one of $\chi_2$ providing that any of them exists. Indeed 
\begin{align*}
	&|\chi_2(\theta_1, \theta_2,\theta_3;t) - \chi_1(\theta_1, \theta_2,\theta_3;t)| \\
	&\leq \evx{}_{,|X_t|\neq 0} \left|h\rbr{\theta_2/\sqrt{e^{(n-1)\lambda_p t}|X_{t}|}}^{e^{(n-1)\lambda_p t}|X_{t}|} - h \rbr{\theta_2/\sqrt{|X_{nt}|}} ^{|X_{nt}|}\right| \\
	&= \evx{}_{,|X_t|\neq 0} \left| \ldots \right| 1_{Ext} + \evx{}_{,|X_t|\neq 0} \left| \ldots \right| 1_{Ext^c}. 
\end{align*}
The sequence of events $\cbr{|X_t|\neq 0}$ decreases to $Ext^c$ and $\pr{Ext^c}>0$. We have $|h| \leq 1$ hence the first summand converges to $0$. By Fact \ref{fact:aaa} on $Ext^c$ we have $\frac{|X_{nt}|}{|X_t| e^{(n-1)\lambda_p t}} \rightarrow 1\:a.s.$ therefore the second summand converges to $0$ as well. Now we recall that $Z^i_t$ are i.i.d and independent of $X_t$. Conditioning with respect to $|X_t|$ we get 
\begin{align}
	& \chi_2(\theta_1, \theta_2,\theta_3;t) = \nonumber\\
	&\evx{}_{,|X_t|\neq 0} \exp\cbr{i \theta_1 e^{-\lambda_p t} |X_t|} h_{Z_{t}^1 - z_{t}^1}\rbr{\theta_3/ |X_t|^{1/2}}^{|X_t|} h \rbr{\theta_2/\sqrt{e^{(n-1)\lambda_p t}|X_{t}|}}^{e^{(n-1)\lambda_p t}|X_{t}|}, \label{eq:tmp99} 
\end{align}
where $h_{Z_{t}^1 - z_{t}^1}$ denotes the characteristic function of $Z_t^1 - z_t^1$. We will now investigate the second term of the product above. To this end we fix two sequences $\cbr{t_m}$ and $\cbr{a_m}$ such that $t_m\rightarrow +\infty, a_m\rightarrow +\infty,\text{ as } m\rightarrow +\infty$ and consider
\[ S_m := \frac{1}{s_m} \sum_{i=1}^{a_m} (Z_{t_m}^i - z_{t_m}), \]
where $Z_{t_n}^i$ are i.i.d. copies as described above and $s_m^2 := a_m \var_0(Z_{t_m}^1) $. From Fact \ref{fact:momentsSubcritical} we easily see that $s_m^2$ is of the same order as $a_m$. We will now verify the Lindeberg condition. Let us calculate
\[ L_m(r) := \frac{1}{s_m^2} \sum_{i=1}^{a_m} \ev{}_0\rbr{(Z_{t_m}^i - z_{t_m}^i )^2 1_{|Z_{t_m}^i - z_{t_m}^i| > r s_m} } = \frac{a_m}{s_m^2} \ev{}_0\rbr{(Z_{t_m}^1 - z_{t_m}^1 )^2 1_{|Z_{t_m}^i - z_{t_m}^i| > r s_m} }. \]
By by the Schwarz inequality, the Chebyshev inequality and Fact \ref{fact:momentsSubcritical} we get
\[ L_m(r) \leq \frac{a_m}{s_m^2} \sqrt{\pr{|Z_{t_m}^i - z_{t_m}^i| > r s_m}} \sqrt{ \ev{}_0 (Z_{t_m}^1 - z_{t_m}^1 )^4} \cleq \frac{a_m}{s_m^2} \frac{{\ev{}_0{(Z_{t_m}^i - z_{t_m}^i)^4}}}{(r s_m)^2} \cleq \frac{a_m}{s_m^2} \frac{1}{(r s_m)^2} \rightarrow 0, \]
as $m\rightarrow +\infty$. The CLT holds therefore $S_m\rightarrow^d \mathcal{N}(0,1)$. Obviously $\frac{s_m^2}{a_m} \rightarrow \sigma_f^2$. We conclude that 
\begin{equation}
	h_{Z_{t_m} - z_{t_m}}(\theta_3 / \sqrt{a_m})^{a_m} \rightarrow e^{-\theta_3^2/(2 \sigma_f^2)}. \label{eq:tmp82} 
\end{equation}
Now we go back the $\chi_2$
\[ \chi_2(\theta_1, \theta_2, \theta_3;t) = \evx{}_{,|X_t|\neq 0} (\ldots) 1_{Ext} + \evx{}_{,|X_t|\neq 0} (\ldots) 1_{Ext^c}. \]
Arguing as before we get that the first summand converges to $0$. The second one is
\[ \evx{}_{,|X_t|\neq 0} (\ldots) 1_{Ext^c} = \mathbb{P}_{x,|X_t| \neq 0} (Ext^c) \evx{}_{,Ext^c} (\ldots), \]
where $\evx{}_{,Ext^c}$ denotes the conditional probability with respect to $Ext^c$. We have $\mathbb{P}_{|X_t| \neq 0} (Ext^c)\rightarrow 1$. We proved \eqref{eq:tmp82} for arbitrary sequences therefore the second term in \eqref{eq:tmp99} converges (on the set of non-extinction) as follows
\[ h_{Z_{t}^1 - z_{t}^1}\rbr{\theta_3/ |X_{t}|^{1/2}}^{|X_{t}|} \rightarrow e^{-\theta_3^2/(2 \sigma_f^2)},\quad a.s.,\:\: \text{ as } t\rightarrow +\infty. \]
We know that $e^{-\lambda_p t} |X_{t}| \rightarrow W$ a.s. (recall that $W$ denotes $V_\infty$ conditioned on $Ext^c$). By the Lebesgue dominated convergence theorem we get
\[ \chi_2(\theta_1, \theta_2,\theta_3;t) \rightarrow e^{-\theta_2^2/(2 \sigma_V^2)} e^{-\theta_3^2/(2 \sigma_f^2)} \evx{} \exp\cbr{i \theta_1 W}, \text{ as }t\rightarrow +\infty. \]
We know that $\cbr{|X_t| \neq 0}$ is a decreasing sequence of events and $\bigcap_{i=1}^\infty \cbr{|X_t| \neq 0} = Ext^c$. Moreover $\pr{Ext^c}>0$, therefore by Lemma \ref{lem:totalVariation} the limit of $Z_3(t)$ is the same if considered conditioned with respect to $\cbr{|X_t| \neq 0}$ or conditioned with respect to $Ext^c$. In this way we have proved that conditionally on $Ext^c$
\[ Z_3(t) \rightarrow^d \rbr{W, G_1, G_2}, \]
where $\rbr{W, G_1, G_2}$ is the same as in the statement of Theorem \ref{thm:clt1}. Using some obvious transformations we get
\[ Z_2(t) \rightarrow^d \rbr{W, \sqrt{W} G_1, \sqrt{W} G_2} \:\text{ and }\: Z_1(t) \rightarrow^d \rbr{W, \sqrt{W} G_1, \sqrt{W} G_2}, \]
which easily implies the convergence asserted in the theorem.

\subsection{CLT for large branching rate} We start with 
\begin{proof}
	[Proof of Fact \ref{fact:martingaleConvergence}] The fact that $H$ is a martingale with respect to the filtration of the OU branching system follows by its branching property and easy calculations using (\ref{eq:moments-formula}), \eqref{eq:moments} and \eqref{eq:semi-group}. Consider now $f(x) = \|x\|$. Using \eqref{eq:moments-formula} and \eqref{eq:moments} we can get 
	\begin{align*}
		\evx{} H_t^2 &\ceq e^{2(-\lambda_p + \mu) t} e^{\lambda_p t} \T{t} f^2(x) + e^{2(-\lambda_p + \mu) t}\intc{t} e^{\lambda_p (t-s)} \T{t-s}\sbr{ e^{2\lambda_p s} (T_s f(\cdot))^2 } (x) \dd{s}\\
		&\cleq e^{(-\lambda_p + 2\mu)t}(1 +\|x\|^2) + e^{(-\lambda_p + 2\mu)t}\intc{t} e^{\lambda_p s} \T{t-s}\sbr{ e^{-2\mu s} f^2 } (x) \dd{s} \\
		&\cleq (1+\|x\|^2) \rbr{1 + \intc{t} e^{(-\lambda_p + 2\mu)(t-s)} \dd{s}} \leq C (1+\|x\|^2). 
	\end{align*}
	To check that $V$ and $H$ are orthogonal one can use \eqref{eq:moments} together with the polarisation formula. We leave this task to the reader, just mentioning that this result is somehow expected as $1,x$ are orthogonal with respect to densities $g_t$ defining \eqref{eq:OUrep}. 
\end{proof}
\begin{proof}
	[Proof of Fact \ref{fact:law}] The proof follows easily by Fact \ref{fact:coupling}. Indeed it is enough to notice that, if we couple $X$ and $\tilde{X}$ according to the procedures described above, we may write
	\[ \tilde{H}_t - H_t = e^{(-\lambda+\mu)t} x e^{-\mu t} |X_t| = e^{-\lambda t} x |X_t|, \]
	where $\tilde{H},H$ are defined according to \eqref{eq:martingale}. 
\end{proof}

It will be useful to know that the range of the OU branching system grows at most linearly. This is a well-known fact fact for the branching Brownian motion. It is somehow obvious that it also holds for the OU branching motion as the Ornstein-Uhlenbeck process is ``better concentrated'' than the Brownian motion. However, we were not able to find a proof in the literature hence we provide one.
\begin{lemma}
	\label{lem:bounded} Let $X$ be the OU branching system starting from $x$. There exists a constant $C$ and a random variable $T$ such that with probability one
	\[ \forall_{t>T} \forall_{i\in \cbr{1,2,\ldots, |X_t|}} X_t(i) \in B(x e^{-\mu t} ,Ct ), \]
	where $B(0,r)$ denotes a ball of radius $r$ centred at $0$. 
\end{lemma}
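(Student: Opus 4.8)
The plan is to reduce the statement to a tail estimate for the maximal displacement at integer times, and then fill the gaps between integer times by a crude bound on the motion. First I would recall from Fact \ref{fact:coupling} that we may write each particle's position as $X_t(i) = x e^{-\mu t} + \gamma_t^i$, where $\gamma^i$ is the trajectory of the corresponding particle in the OU branching system started from $0$ (using the common-noise coupling along the genealogy). Thus it suffices to prove the claim for $x=0$, i.e.\ to find $C$ and a random $T$ with $\gamma_t^i \in B(0,Ct)$ for all $t>T$ and all $i\le |X_t|$. The key probabilistic input is a union bound: at a fixed time $t$, $\evx{}|X_t| \ceq e^{\lambda_p t}$, and for a single OU trajectory started at $0$ the marginal $\gamma_t$ is Gaussian with variance bounded by $\sigma^2$ uniformly in $t$, so $\pr{\|\gamma_t\| > r} \cleq e^{-c r^2}$ for a suitable $c>0$. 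Using the many-to-one formula (a special case of \eqref{eq:moments} with $f = 1_{B(0,r)^c}$), $\evx{}\#\{i\le |X_t| : \|\gamma_t^i\|>r\} \ceq e^{\lambda_p t}\,\T{t}1_{B(0,r)^c}(0) \cleq e^{\lambda_p t} e^{-c r^2}$.

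Next I would set up a Borel–Cantelli argument at integer times. Fix $C$ large enough that $c C^2 > \lambda_p$ (say $cC^2 \ge \lambda_p + 1$). Let $A_n$ be the event that some particle alive at time $n$ lies outside $B(0, Cn/2)$. By Markov's inequality and the bound above, $\pr{A_n} \cleq e^{\lambda_p n} e^{-c C^2 n^2/4}$, which is summable; hence by Borel–Cantelli only finitely many $A_n$ occur, i.e.\ there is a random $N$ with $X_n(i)\in B(0,Cn/2)$ for all $n\ge N$ and all $i\le|X_n|$. To pass from integer times to all real $t$, I would control the motion of the finitely many particles present in the interval $[n,n+1]$: conditionally on the configuration at time $n$, each such particle descends from one of the particles at time $n$, and the maximal displacement of an OU path over a unit time interval, uniformly over the (random but a.s.\ finite for each $n$) number of descendants, can be bounded — again by a Gaussian/Borel–Cantelli estimate on the events $B_n = \{\sup_{s\in[n,n+1]}\sup_{i} \|\gamma^i_s - \gamma^i_n\| > Cn/2\}$, whose probabilities are again summable since $|X_{n+1}|$ has exponentially bounded expectation and the modulus of continuity of OU over a unit interval has Gaussian tails. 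Combining, for $t\in[n,n+1]$ with $n\ge \max(N,N')$ we get $\|\gamma^i_t\| \le \|\gamma^i_n\| + \|\gamma^i_t - \gamma^i_n\| \le Cn \le Ct$, which is the desired inclusion (after translating back by $xe^{-\mu t}$, and enlarging $C$ by a harmless constant factor).

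The main obstacle I expect is the uniform-in-genealogy control over the interval $[n,n+1]$: one must bound the supremum of $\|\gamma^i_s - \gamma^i_n\|$ simultaneously over all $i \le |X_{n+1}|$ and all $s\in[n,n+1]$, where the index set itself is random. The clean way is to condition on $\mathcal F_n$, use that given $|X_n|$ the subsystems evolve as independent OU branching systems over $[n,n+1]$, apply a union bound over the at-most-$|X_{n+1}|$ particles together with the reflection-principle tail bound $\pr{\sup_{s\le 1}\|OU_s - OU_0\| > r}\cleq e^{-c'r^2}$ for the OU process (which follows from the analogous Brownian bound since the drift pulls toward the origin, or directly from the representation \eqref{eq:semi-group}), and then take expectations using $\evx{}|X_{n+1}| \ceq e^{\lambda_p(n+1)}$; the resulting sum $\sum_n e^{\lambda_p(n+1)}e^{-c'C^2n^2/4}$ is finite, so Borel–Cantelli applies. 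Everything else is routine Gaussian estimation and bookkeeping of constants.
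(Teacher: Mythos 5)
Your argument is correct and follows the same overall strategy as the paper: reduce to $x=0$ by the coupling of Fact \ref{fact:coupling}, then run a Borel--Cantelli argument in which an exponentially growing population count ($\ceq e^{\lambda_p n}$) is beaten by Gaussian tails of order $e^{-cn^2}$. The differences are in the implementation. For non-integer times the paper does not discretise twice: it bounds the whole ancestral path over $[0,n]$ in one step, using the stochastic domination of the running maximum of an Ornstein--Uhlenbeck trajectory on $[0,n]$ by $C_1(\sqrt{\log n}+|G|)$ (a consequence of \cite{MR1664394}, the coupling and Gaussian concentration), so that the event ``the system up to time $n$ stays in $B(0,C(n-1))$'' immediately covers all real $t\in[n-1,n]$. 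You instead control positions at integer times and add a separate unit-interval modulus-of-continuity estimate; this is slightly longer but more elementary, since it only needs the tail of $\sup_{s\le 1}$ of an OU increment rather than the long-time growth of the running maximum. For the random population size the paper truncates on $\cbr{|X_n|\le e^{\gamma n}}$ with $\gamma>\lambda_p$ and disposes of the complement by Chebyshev via the moment bound of Fact \ref{fact:aaa}, whereas you use a first-moment (many-to-one plus Markov) bound on the number of ``bad'' particles; both are fine. One point you should patch: your union bound for the interval $[n,n+1]$ is indexed by the particles alive at time $n+1$, but since the offspring law $F(s)=ps^2+(1-p)$ allows extinction of a line (zero offspring with probability $1-p$), a particle alive at some $t\in(n,n+1)$ need not be an ancestor of any particle at time $n+1$ and would escape your bound. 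The fix is cosmetic -- index the union bound by all particles ever alive during $[n,n+1]$, whose expected number is still $\cleq e^{\lambda_p n}$ -- and, to be fair, the paper's own identification of $A_n'$ with an event about ancestors of time-$n$ particles takes the same shortcut.
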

\begin{proof}
	By the usual coupling argument it is enough to prove the fact for $x=0$. Let us denote by $A_n$ an event that $X$ up to time $n$ is contained in $B(0, C (n-1))$ for some constant $C$. By the Borel–Cantelli lemma to show our claim it is enough to prove $\sum_{n\geq 1} \pr{A_n'} <+\infty$. By the result from \cite{MR1664394}, the coupling construction and Gaussian concentration inequality we know that the supremum of the Ornstein-Uhlenbeck process on the interval $[0,n]$ can be stochastically dominated by $C_1 (\sqrt{\log(n)} + |G|)$, where $C_1>0$ is a certain constant and $G$ is a standard normal random variable. Let us fix any $\gamma > \lambda_p$ and estimate 
	\begin{align*}
		\pr{A_n'} &\leq \pr{A_n' \cap \cbr{|X_n|\leq e^{\gamma n}}} + \pr{|X_n|> e^{\gamma n}} \\
		&= \pr{\exists_{i\in\cbr{1,2,\ldots, n}} \sup_{s\in [0,n]} \norm{X_s(i)}{}{}> C(n-1)\cap \cbr{|X_n|\leq e^{\gamma n}} }+ \pr{|X_n|> e^{\gamma n}} \\
		&\leq \pr{C_1(\sqrt{\log(n)}+|G|) > C(n-1) }e^{\gamma n} + \pr{|X_n|> e^{\gamma n}}, 
	\end{align*}
	where $X_s(i)$ denotes the position of the particle $i$ or its ancestor at time $s\in [0,n]$. The proof is concluded by using the estimations of the Gaussian tail in the first term and Fact \ref{fact:aaa} together with the Chebyshev inequality for handling the second one. 
\end{proof}

We will also need 
\begin{fact}
	\label{fact:momentsSuperCritical} Let $\cbr{X_t}_{t\geq 0}$ be the OU branching system starting from $x$ and $\lambda_p > 2 \mu$. Moreover, let $f\in \pspace{}$. Then
	\[ \sup_{t>0} \evx{} \rbr{e^{-(\lambda_p - \mu) t} \ddp{X_t}{\tilde{f}}}^2 < +\infty. \]
	Moreover there exists $c:\R_+\mapsto \R_+$ such that $c(u)\rightarrow 0$ as $u\searrow 0$ and 
	\begin{equation}
		\limsup_{t\rightarrow +\infty} \evx{} \rbr{e^{-(\lambda_p - \mu) t} \rbr{\ddp{X_t}{\tilde{f}} -\ddp{X_t}{l_u}} }^2 < c(u), \label{eq:largeApproximation} 
	\end{equation}
	where $l_u$ is given by \eqref{eq:lu}. 
\end{fact}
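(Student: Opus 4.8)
The plan is to mimic the proof of Fact~\ref{fact:momentsSubcritical}: compute the second moment through the differentiated Laplace recursion \eqref{eq:moments-formula}--\eqref{eq:moments}, and then estimate the resulting space--time integral with the decay bounds of Lemma~\ref{lem:decay}. The one genuine difference with the subcritical case is that here $\lambda_p>2\mu$, so the integral term will \emph{grow} like $e^{(\lambda_p-2\mu)t}$ instead of converging; it is exactly this growth that the normalisation $e^{-(\lambda_p-\mu)t}$ is tuned to absorb.

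Concretely, taking $k=1$ in \eqref{eq:moments-formula} gives $w'(x,s,0)=-e^{\lambda_p s}\T{s}\tilde{f}(x)$, and then $k=2$ together with \eqref{eq:moments} yields
\[
\evx{}\rbr{e^{-(\lambda_p-\mu)t}\ddp{X_t}{\tilde{f}}}^2 = e^{(-\lambda_p+2\mu)t}\T{t}\tilde{f}^2(x) + 2\lambda p\, e^{(-2\lambda_p+2\mu)t}\intc{t} e^{\lambda_p(t+s)}\T{t-s}\sbr{\rbr{\T{s}\tilde{f}}^2}(x)\ds.
\]
Since $\T{t}\tilde{f}^2(x)\rightarrow\ddp{\eq}{\tilde{f}^2}$ and $-\lambda_p+2\mu<0$, the first term is bounded in $t$ and tends to $0$. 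For the integral, \eqref{eq:decay1} gives $\rbr{\T{s}\tilde{f}(x)}^2\cleq (1+\norm{x}{}{2n})e^{-2\mu s}$ for $s\geq 1$, while by \eqref{eq:OUrep} one checks $\T{t-s}\sbr{1+\norm{\cdot}{}{2n}}(x)\cleq 1+\norm{x}{}{2n}$; hence, splitting the integral at $s=1$ (the part over $[0,1]$ contributing a strictly smaller exponential order),
\[
\intc{t} e^{\lambda_p(t+s)}\T{t-s}\sbr{\rbr{\T{s}\tilde{f}}^2}(x)\ds \cleq (1+\norm{x}{}{2n})\,e^{\lambda_p t}\intc{t} e^{(\lambda_p-2\mu)s}\ds \cleq (1+\norm{x}{}{2n})\,e^{(2\lambda_p-2\mu)t},
\]
where the last step uses $\lambda_p-2\mu>0$. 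Multiplying by $e^{(-2\lambda_p+2\mu)t}$ produces a bound of order $1+\norm{x}{}{2n}$, uniform in $t\geq 1$; for $t\in(0,1]$ the second moment is finite (Fact~\ref{fact:aaa}) and continuous in $t$, hence bounded. This gives the first assertion.

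For the approximation bound \eqref{eq:largeApproximation} I would run the identical computation with $\tilde{f}$ replaced by $g:=\tilde{f}-l_u$, which is again centred ($\ddp{g}{\eq}=0$). Now in place of \eqref{eq:decay1} I would use the strong estimate \eqref{eq:strongEstimation}, which via $\T{s}g=-\T{s}(l_u-\tilde{f})$ gives $\rbr{\T{s}g(x)}^2\cleq c(u)^2(1+\norm{x}{}{2n})e^{-2\mu s}$; propagating the extra factor $c(u)^2$ through the same chain of inequalities, and noting that the non-integral term $e^{(-\lambda_p+2\mu)t}\T{t}g^2(x)\rightarrow 0$ so it drops out of the $\limsup$, one gets
\[
\limsup_{t\rightarrow+\infty}\evx{}\rbr{e^{-(\lambda_p-\mu)t}\ddp{X_t}{g}}^2 \cleq c(u)^2\,(1+\norm{x}{}{2n}),
\]
and the right-hand side (for the fixed starting point $x$) is an admissible function of $u$ vanishing as $u\searrow 0$, after possibly relabelling the constant.

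I do not expect a real obstacle here --- everything reduces to bookkeeping once Lemma~\ref{lem:decay} is in hand. The point that needs attention is precisely the exponential accounting: one must be sure that in the regime $\lambda_p>2\mu$ the factor $e^{\lambda_p t}\intc{t}e^{(\lambda_p-2\mu)s}\ds$ is of order $e^{(2\lambda_p-2\mu)t}$ and carries no spurious factor of $t$ (which is what would occur, and does occur, in the critical case $\lambda_p=2\mu$), so that the normalisation $e^{-(\lambda_p-\mu)t}$ cancels it exactly. A minor secondary check is that the contribution of $s\in[0,1]$ to the integral, where Lemma~\ref{lem:decay} does not directly apply, is of strictly smaller exponential order and hence harmless.
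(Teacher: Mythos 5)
Your proposal is correct and follows essentially the same route as the paper: both bound the second moment through \eqref{eq:moments-formula}--\eqref{eq:moments} using $w'(x,s,0)=-e^{\lambda_p s}\T{s}\tilde f(x)$ and the decay estimate \eqref{eq:decay1}, absorb the $e^{(\lambda_p-2\mu)s}$ growth thanks to $\lambda_p>2\mu$, and obtain \eqref{eq:largeApproximation} by rerunning the computation for $\tilde f-l_u$ with \eqref{eq:strongEstimation} in place of \eqref{eq:decay1}. Your extra bookkeeping (splitting the integral at $s=1$, handling small $t$) only makes explicit what the paper leaves implicit.
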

\begin{proof}
	We recall \eqref{eq:moments} and \eqref{eq:moments-formula}. By Lemma \ref{lem:decay} one sees that $w'(x,t,0) \cleq (1+\norm{x}{}{n}) e^{(\lambda_p - \mu)t}$. Now we check that 
	\begin{multline*}
		w''(x,t,0) \cleq e^{\lambda_p t} \T{t} \tilde{f}^2(x) + \intc{t} e^{\lambda_p (t-s)}\T{t-s} \sbr{\rbr{(1+\norm{\cdot}{}{n}) e^{(\lambda_p - \mu)s}}^2}(x) \dd{s} \\
		\cleq e^{\lambda_p t} \T{t} \tilde{f}^2(x) + e^{\lambda_p t} \intc{t} e^{(\lambda_p - 2\mu) s}\T{t-s} \sbr{{(1+\norm{\cdot}{}{2n})}}(x) \dd{s} \cleq e^{2(\lambda_p - \mu) t} (1+\norm{x}{}{2n}). 
	\end{multline*}
	Inequality \eqref{eq:largeApproximation} follows by \eqref{eq:strongEstimation} and the above calculations. Indeed, one checks that 
	\begin{equation*}
		\limsup_{t\rightarrow +\infty } e^{-2(\lambda_p - \mu) t}w''(x,t,0) \leq c(u) (1+\norm{x}{}{n}), 
	\end{equation*}
	for some function $c(u)$ such that $c(u)\rightarrow 0$ as $u\searrow 0$. 
\end{proof}

\paragraph{Proof of Theorem \ref{thm:clt2}.} Our first aim will be to prove the convergence of the spatial fluctuations. To this end we denote
\[ Y_1(t) := e^{-(\lambda_p-\mu)t} \rbr{\ddp{X_t}{f} - |X_t| \ddp{f}{\eq} } = e^{-(\lambda_p-\mu)t} \ddp{X_t}{\tilde{f}}, \]
where $\tilde{f}(x) = f(x) - \ddp{f}{\eq}$. We have 
\begin{multline*}
	Y_1(t+s) = e^{-(\lambda_p - \mu) (t+s)} \sum_{i=1}^{|X_{t+s}|} \tilde{f}(X_{t+s}(i)) = e^{-(\lambda_p - \mu) t} \sum_{i=1}^{|X_{t}|} e^{-(\lambda_p - \mu) s} \sum_{j=1}^{|X_s^{i,t}|}\tilde{f}(X_{s}^{i,t}(j)) \\
	= e^{-(\lambda_p - \mu) t} \sum_{i=1}^{|X_{t}|} e^{-(\lambda_p - \mu) s} \rbr{\sum_{j=1}^{|X_s^{i,t}|} (\tilde{f}(X_{s}^{i,t}(j)) - \tilde{f}(\tilde{X}_{s}^{i,t}(j))) + \tilde{f}(\tilde{X}_{s}^{i,t}(j))}, 
\end{multline*}
where $\{X^{i,t}_s\}_s$ denotes the subsystem originating from the particle $X_t(i)$ and $\{\tilde{X}^{i,t}_s\}_s$ is a coupled version of $X^{i,t}$ starting from $0$. We write 
\begin{equation}
	Y_1(t+s) = e^{-(\lambda_p - \mu) t} \sum_{i=1}^{|X_{t}|} e^{-(\lambda_p - \mu) s} \sum_{j=1}^{|X_s^{i,t}|} (\tilde{f}(X_{s}^{i,t}(j)) - \tilde{f}(\tilde{X}_{s}^{i,t}(j))) + e^{-(\lambda_p - \mu) (t+s)} \sum_{i=1}^{|X_{t}|} \ddp{ \tilde{X}_{s}^{i,t}}{\tilde{f}}. \label{eq:motyka} 
\end{equation}
Let us first deal with the second term
\[ Y_2(t+s) := e^{-(\lambda_p - \mu) (t+s)} \sum_{i=1}^{|X_{t}|} \ddp{ \tilde{X}_{s}^{i,t}}{\tilde{f}_s} + e^{- (\lambda_p - \mu)(t+s)} l_s |X_{t+s}|, \]
where $\tilde{f}_s(x) := f(x) - \T{s}f(0)$ and $l_s := \T{s}f(0) - \ddp{f}{\eq}$. By Lemma \ref{lem:decay} we know that $|l_s| \cleq e^{-2\mu s}$ and therefore
\[ \evx{} |e^{- (\lambda_p - \mu)(t+s)} l_s |X_{t+s}|| \cleq e^{\mu(t+s) - 2\mu s}. \]
From now on, whenever we prove convergence, we will assume that $s=2 t$. With this convention the above expression converges to $0$. We denote the first summand of $Y_2$ by $Y_3$. As we explained in the proof of Theorem \ref{thm:clt1} the systems $\tilde{X}^{i,t}$ are i.i.d. Moreover their only connection with $X_t$ is via the number of particles $|X_t|$. Below, we will use the conditional expectation given $X_t$ which will be denoted as $\ev{}_{X_t}$. To ease the notation we will write simply $\ev{}_0 \tilde{X}^{i,t}$ instead of $\ev{}_{X_t} \tilde{X}^{i,t}$. Using the conditional expectation and the particular choice of $\tilde{f}_s$ we calculate 
\begin{align*}
	\evx{}\rbr{Y_3(t+s)}^2 &= e^{-2(\lambda_p - \mu) (t+s)} \evx{} \sum_{i=1}^{|X_{t}|} \sum_{j=1}^{|X_{t}|}\ev{}_{X_t} \rbr{\ddp{ \tilde{X}_{s}^{i,t}}{\tilde{f}_s}\ddp{ \tilde{X}_{s}^{j,t}}{\tilde{f}_s}}\\
	&=e^{-2(\lambda_p - \mu) (t+s)} \evx{} \sum_{i=1}^{|X_{t}|} \ev{}_0 \rbr{\ddp{ \tilde{X}_{s}^{i,t}}{\tilde{f}_s}^2}. 
\end{align*}
By Fact \ref{fact:momentsSuperCritical} one can prove that $\ev{}_0 \rbr{\ddp{ \tilde{X}_{s}^{i,t}}{\tilde{f}_s}^2} \cleq e^{2(\lambda_p - \mu) s}$ therefore
\[ \evx{}\rbr{Y_3(t+s)}^2 \cleq e^{(-\lambda_p + 2\mu) t} \rightarrow 0. \]
In this way we proved that $Y_2(s+t)\rightarrow 0$ in probability. By arguments in Section \ref{sec:approximation} we may assume that $f$ has the second derivative which is bounded by a polynomial. To this end we check that \eqref{eq:assumptionUniform} follows by \eqref{eq:largeApproximation}.

Now we decompose the first term of \eqref{eq:motyka} according to the formula 
\begin{align}
	&Y_5(t+s) + Y_6(t+s) := \nonumber\\
	&e^{-(\lambda_p - \mu) (t+s)} \sum_{i=1}^{|X_{t}|} \sum_{j=1}^{|X_s^{i,t}|} E(i,t,s) +e^{-(\lambda_p - \mu) (t+s)} \sum_{i=1}^{|X_{t}|} \sum_{j=1}^{|X_s^{i,t}|} \grad f(\tilde{X}_{s}^{i,t}(j)) \circ ({X}_{s}^{i,t}(j)-\tilde{X}_{s}^{i,t}(j)),\label{eq:tobeusedagain} 
\end{align}
where $E(i,t,s) := \tilde{f}(X_{s}^{i,t}(j)) - \tilde{f}(\tilde{X}_{s}^{i,t}(j)) - \grad f(\tilde{X}_{s}^{i,t}(j)) \circ ({X}_{s}^{i,t}(j)-\tilde{X}_{s}^{i,t}(j))$. We now notice that by the coupling properties we have 
\begin{equation}
	{X}_{s}^{i,t}(j)-\tilde{X}_{s}^{i,t}(j) = {X}_{t}(i) e^{-\mu s}. \label{eq:tmpCoupling} 
\end{equation}
We have $|E(i,t,s)|\cleq R^n ( \norm{{X}_{t}(i)}{}{} e^{-\mu s})^2$ if only ${X}_{s}^{i,t}(j),\tilde{X}_{s}^{i,t}(j)$ belong to the ball of radius $R$. Let us now denote the event that this condition holds for all $i,j$ with $R= C (t+s) + \norm{x}{}{} e^{-\mu t}$ by $A_t$. By Fact \ref{fact:aaa} we thus prove
\[ \evx{} |Y_5(t+s)| 1_{A_t} \cleq e^{\mu(s+t)} ((t+s) + e^{-\mu t} \norm{x}{}{} )^{3n} e^{-2\mu s} \rightarrow 0. \]
By Lemma \ref{lem:bounded} we have $\pr{A_t} \rightarrow 1$, hence we proved that $Y_5(t+s)\rightarrow 0$ in probability. Using again \eqref{eq:tmpCoupling} we may rewrite $Y_6$ as 
\begin{multline*}
	Y_6(t+s) = e^{-(\lambda_p - \mu) t} \sum_{i=1}^{|X_{t}|} X_t(i) \circ \rbr{ Z_s^i - z_s } + z_s \circ \rbr{e^{-(\lambda_p - \mu) t}\sum_{i=1}^{|X_{t}|} X_t(i)} \\
	+ \ddp{\grad f}{\eq}\circ \rbr{e^{-(\lambda_p - \mu) t} \sum_{i=1}^{|X_{t}|} X_t(i) (e^{-\lambda_p s}|X^{i,t}_s|-1)} + \ddp{\grad f}{\eq}\circ \rbr{e^{-(\lambda_p - \mu) t} \sum_{i=1}^{|X_{t}|} X_t(i) } \\
	=: Y_7(t+s) + Y_8(t+s) + Y_9(t+s) + Y_{10}(t+s), 
\end{multline*}
where $h(x) = \grad f(x) - \ddp{\grad f}{\eq} $, $Z^i_s := e^{-\lambda_p s} \ddp{\tilde{X}^{i,t}_s}{h}$ and $z_s:= \ev{}_0Z^i_s$ (this does not depend on $i$ since $Z^i_s$ are i.i.d.). Simple calculations using \eqref{eq:moments-formula} and Lemma \ref{lem:decay} (second estimation in \eqref{eq:decay1}) reveal that $\norm{z_s}{}{} \cleq e^{-2\mu s}$. Moreover by Fact \ref{fact:momentsSuperCritical} the covariance matrix $\cov_0(Z^i_s)$ is bounded by some $C$ (in a sense that each entry is bounded). Using conditioning with respect to $|X_t|$ we have (we skip the index )
\[ \evx{} Y_7(t+s)^2 = {e^{-2(\lambda_p - \mu) t} \evx{} \sum_{i=1}^{|X_{t}|} X_t(i)^T \cov_0{(Z^i_s)}X_t(i)} \cleq {e^{-2(\lambda_p - \mu) t} \evx{} \sum_{i=1}^{|X_{t}|} \norm{X_t(i)}{}{2}} \rightarrow 0, \]
where $^T$ denotes the transposition and the convergence holds by assumption that $\lambda_p>2\mu$. The convergence: $\evx{}(Y_9(t+s))^2 \rightarrow 0$ follows in a very similar fashion and is left to the reader. Now by Fact \ref{fact:martingaleConvergence} one easily checks that $Y_8(t+s)\rightarrow 0 $ a.s and $Y_{10}$ converges to the same limit as in the thesis of Theorem \ref{thm:clt2}. A revision of the steps above reveals that the limit (in probability) of $Y_1$ is the same as the limit of $Y_{10}$. This in fact is also enough to conclude the second convergence in Theorem \ref{thm:clt2}. To end the proof we notice that to obtain the joint convergence \eqref{eq:triple} one may use the same methods as in the proof of Theorem \ref{thm:clt1}.

\subsection{CLT for critical branching rate}
\begin{fact}
	\label{fact:momentsCritical} Let $\cbr{X_t}_{t\geq 0}$ be the OU branching system and $\lambda_p = 2 \mu$. Moreover, let $f\in \pspace{}$, then there exists a constant $C$ such that
	\[ \evx{} \rbr{e^{-(\lambda_p/2) t} t^{-1/2} \ddp{X_t}{\tilde{f}}} \rightarrow 0 \quad \text{ as } t\rightarrow +\infty. \]
	\begin{equation}
		\evx{} \rbr{e^{-(\lambda_p/2) t} t^{-1/2} \ddp{X_t}{\tilde{f}}}^2 \rightarrow \sigma_f^2, \quad \var_x\rbr{e^{-(\lambda_p/2) t} t^{-1/2} \ddp{X_t}{\tilde{f}}} \rightarrow \sigma_f^2, \label{eq:criticalSecondMoments} 
	\end{equation}
	where $\sigma_f^2$ is the same as in \eqref{eq:sigmafCritical}. Moreover, $\sup_{t\geq \delta } \evx{} \rbr{e^{-(\lambda_p/2) t} t^{-1/2} \ddp{X_t}{\tilde{f}}}^4 <+\infty$ for any $\delta>0$. 
\end{fact}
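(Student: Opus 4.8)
The plan is to follow the proof of Fact~\ref{fact:momentsSubcritical}: compute the first and second moments of $\ddp{X_t}{\tilde f}$ from the recursion \eqref{eq:moments-formula} together with \eqref{eq:moments}, now carrying the extra $t^{-1/2}$ normalisation, and use the \emph{critical} asymptotics \eqref{eq:decay2} of Lemma~\ref{lem:decay} where in the subcritical case only the exponential decay \eqref{eq:decay1} was needed. Write $a:=\ddp{\grad f}{\eq}\in\Rd$ (the gradient taken in the weak sense). For the first moment, \eqref{eq:moments} and \eqref{eq:moments-formula} with $k=1$ give $\evx{}\ddp{X_t}{\tilde f}=e^{\lambda_p t}\T{t}\tilde f(x)$, and since $\lambda_p=2\mu$ the bound $|\T{t}\tilde f(x)|\cleq(1+\norm{x}{}{n})e^{-\mu t}$ yields $e^{-(\lambda_p/2)t}t^{-1/2}\evx{}\ddp{X_t}{\tilde f}\cleq t^{-1/2}(1+\norm{x}{}{n})\rightarrow0$; in particular the square of the mean is $O\rbr{t^{-1}(1+\norm{x}{}{2n})}$, so the two claims in \eqref{eq:criticalSecondMoments} are equivalent.

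For the second moment, differentiating \eqref{eq:moments-formula} up to $k=2$ and dividing by $e^{\lambda_p t}$ exactly as in Fact~\ref{fact:momentsSubcritical} gives
\[ \evx{}\rbr{e^{-(\lambda_p/2)t}\ddp{X_t}{\tilde f}}^2=\T{t}\tilde f^2(x)+2\lambda p\intc{t}\T{t-s}\sbr{\rbr{e^{(\lambda_p/2)s}\T{s}\tilde f}^2}(x)\,\ds. \]
The new feature is that $e^{(\lambda_p/2)s}\T{s}\tilde f=e^{\mu s}\T{s}\tilde f$ no longer vanishes: by Lemma~\ref{lem:decay} it converges pointwise to $y\mapsto y\circ a$, with the uniform polynomial bound $|e^{\mu s}\T{s}\tilde f(y)|\cleq1+\norm{y}{}{n}$ valid for all $s\geq0$. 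Hence the time integral is of order $t$, which is why one divides by $t$. Substituting $s=tu$ turns $t^{-1}\intc{t}\cdots$ into $\int_0^1\T{t(1-u)}\sbr{\rbr{e^{\mu tu}\T{tu}\tilde f}^2}(x)\,\du$; for each fixed $u\in(0,1)$ both $tu\rightarrow+\infty$ and $t(1-u)\rightarrow+\infty$, and since $\T{r}h_t(x)\rightarrow\ddp{h}{\eq}$ whenever $h_t\rightarrow h$ pointwise under a common polynomial bound and $r\rightarrow+\infty$ (evident from \eqref{eq:semi-group}), the integrand tends to $\ddp{\eq}{(\cdot\circ a)^2}$. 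The same polynomial bound, combined with $\T{r}\sbr{(1+\norm{\cdot}{}{n})^2}(x)\cleq1+\norm{x}{}{2n}$ uniformly in $r$ (from \eqref{eq:OUrep}), dominates the integrand in $u$, so dominated convergence gives $t^{-1}\intc{t}\cdots\rightarrow\ddp{\eq}{(\cdot\circ a)^2}$, while $t^{-1}\T{t}\tilde f^2(x)\rightarrow0$. Finally $\eq$ is the centred Gaussian with coordinate variance $\sigma^2/(2\mu)$, so $\ddp{\eq}{(y\circ a)^2}=\frac{\sigma^2}{2\mu}\sum_{i=1}^d\ddp{\frac{\partial f}{\partial x_i}}{\eq}^2$, and the limit equals $2\lambda p\cdot\frac{\sigma^2}{2\mu}\sum_{i=1}^d\ddp{\frac{\partial f}{\partial x_i}}{\eq}^2=\sigma_f^2$, matching \eqref{eq:sigmafCritical}.

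For the fourth moment I would bound the derivatives $w',w'',w'''$ recursively for $t\geq\delta$ (the contribution of $s\in(0,\delta)$ in each integral is harmless and only changes constants): from $|w'(x,s,0)|=e^{\lambda_p s}|\T{s}\tilde f(x)|\cleq e^{(\lambda_p/2)s}(1+\norm{x}{}{n})$ and the recursion for $k=2$, using the uniform bound on the rescaled integrand, $|w''(x,s,0)|\cleq s\,e^{\lambda_p s}(1+\norm{x}{}{2n})$; then $k=3$ gives $|w'''(x,s,0)|\cleq s\,e^{(3/2)\lambda_p s}(1+\norm{x}{}{3n})$; and $k=4$, whose nonlinear term $8w'w'''+6(w'')^2$ is of order $s^2 e^{2\lambda_p s}(1+\norm{\cdot}{}{4n})$, gives $w^{(4)}(x,t,0)\cleq t^2 e^{2\lambda_p t}(1+\norm{x}{}{4n})$. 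Multiplying by $e^{-2\lambda_p t}t^{-2}$ and using \eqref{eq:moments} yields the asserted uniform bound on $\evx{}\rbr{e^{-(\lambda_p/2)t}t^{-1/2}\ddp{X_t}{\tilde f}}^4$ over $t\geq\delta$.

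The main obstacle is the second-moment step. In contrast with the subcritical case the inner function does not decay, so the time integral grows linearly and the value of the constant $\ddp{\eq}{(\cdot\circ a)^2}$ has to be pinned down by controlling the integrand uniformly in both the ``slow'' variable $s$ and the ``fast'' variable $t-s$; in particular one must check that the boundary layers near $s=0$ and $s=t$ contribute only $O(1/t)$, which is exactly what the uniform polynomial bound and the refined estimate \eqref{eq:decay2} (rather than \eqref{eq:decay1}) of Lemma~\ref{lem:decay} guarantee.
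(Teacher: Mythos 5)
Your proposal is correct and follows essentially the same route as the paper: the moment recursion \eqref{eq:moments-formula}--\eqref{eq:moments}, the refined asymptotics \eqref{eq:decay2} of Lemma~\ref{lem:decay} to identify the second-moment limit (your substitution $s=tu$ with dominated convergence is just a cosmetic variant of the paper's direct replacement of the integrand by $(\cdot\circ a)^2$ and the limits $\T{t}[x_ix_j]\rightarrow 0$, $\T{t}[x_i^2]\rightarrow\sigma^2/(2\mu)$), and the same recursive bounds $w''\cleq t e^{\lambda_p t}$, $w'''\cleq t e^{(3/2)\lambda_p t}$ for the fourth moment on $t\geq\delta$.
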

\begin{proof}
	The first convergence follows easily by \eqref{eq:decay1} in Lemma \ref{lem:decay}. Using \eqref{eq:moments-formula} and \eqref{eq:moments} again we calculate the second moment 
	\begin{multline}
		\evx{} \rbr{e^{-(\lambda_p/2)t}t^{-1/2}\ddp{X_t}{\tilde{f}}}^2 = t^{-1}\T{t} \tilde{f}^2(x) + 2\lambda p e^{-\lambda_p t} t^{-1} \intc{t} e^{\lambda_p (t-s)} \T{t-s} \sbr{\rbr{ e^{\lambda_p s}\T{s} \tilde{f}(\cdot)}^2}(x) \dd{s} \\
		= t^{-1}\T{t} \tilde{f}^2(x) + 2\lambda p t^{-1} \intc{t} \T{t-s} \sbr{\rbr{ e^{(\lambda_p/2) s}\T{s} \tilde{f}(\cdot)}^2}(x) \dd{s}. \label{eq:NnwSecondMoment} 
	\end{multline}
	
	We recall that $\lambda_p/2 = \mu$. Using Lemma \ref{lem:decay} (equation \eqref{eq:decay2}) and elementary considerations, we obtain that the limit of the above expression is the same as the one of 
	\begin{multline*}
		2\lambda p t^{-1} \intc{t} \T{t-s} \sbr{\rbr{\sum_{i=1}^d x_i \ddp{\frac{
		\partial f}{
		\partial x_i} }{\eq} }^2}(x) \dd{s} = \\
		2\lambda p \sum_{i=1}^d \rbr{\ddp{\frac{
		\partial f}{
		\partial x_i} }{\eq}^2 t^{-1}\intc{t} \T{t-s} \sbr{{ x_i^2 }}(x) \dd{s}} + 2\lambda p \sum_{i\neq j} \rbr{\ddp{\frac{
		\partial f}{
		\partial x_i} }{\eq} \ddp{\frac{
		\partial f}{
		\partial x_j} }{\eq} t^{-1} \intc{t} \T{t-s} \sbr{{ x_i x_j }}(x) \dd{s}}. 
	\end{multline*}
	One easily checks that $\T{t}[x_ix_j](x) \rightarrow 0$ hence the second term disappears in the limit. We also have $\T{t}[x_i^2](x) \rightarrow \frac{\sigma^2}{2 \mu}$ and so the whole expression converges to $\sigma_f$ given by \eqref{eq:sigmafCritical}.
	
	Obviously the limit of variances is the same. We also conclude that for any $t\geq 0$
	\[ w''(x,t,0) \cleq (1+ \norm{x}{}{2n}) e^{\lambda_p t} t. \]
	Similarly we investigate $w'''(x,t,0)$. By \eqref{eq:moments-formula} we have
	\[ |w'''(x,t,0)| \cleq e^{\lambda_p t}\T{t} { |\tilde{f}|^3(x) } + \left.\intc{t} e^{\lambda_p (t-s)} \T{t-s} \sbr{ w''(\cdot, s,0) w'(\cdot, s,0)}(x) \dd{s} .\right. \]
	Using the fact that by (\ref{eq:decay1}) $|w'(x,t,0)|\cleq e^{(\lambda_p -\mu)t}(1+\|x\|^n)$, together with the above estimate on $w''$ and the fact that $\T{t}\sbr{(1+\norm{\cdot}{}{n})^3}(x)\cleq (1+\norm{x}{}{3n})$, we get
	\[ |w'''(x,t,0)| \cleq (1+\norm{x}{}{3n})e^{\lambda_p t} + e^{\lambda_p t} \left.\intc{t} e^{(\lambda_p/2) s} s \T{t-s} \sbr{ 1+ \norm{\cdot}{}{3n} }(x) \dd{s} \right. \cleq e^{((3/2) \lambda_p)t}t (1+ \norm{x}{}{3n}). \]
	Finally, we will also need the fourth moment. By \eqref{eq:moments-formula} and the estimates above we get 
	\begin{multline*}
		\evx{} \rbr{e^{-(\lambda_p/2)t}t^{-1/2}\ddp{X_t}{\tilde{f}}}^4 \cleq e^{-\lambda_p t}t^{-2}\T{t} { \tilde{f}^4(x) } \\+ e^{-2\lambda_p t} t^{-2} \intc{t} e^{\lambda_p (t-s)} \T{t-s} \sbr{ w''(\cdot,s,0)^2 + w'''(\cdot,s,0) w'(\cdot,s,0) }(x) \dd{s} \\
		\cleq e^{-\lambda_p t} t^{-2} (1+\norm{x}{}{4n}) + e^{-\lambda_p t} t^{-2}\intc{t} e^{\lambda_p s}s^{2} \T{t-s} \sbr{ (1 + \norm{\cdot}{}{4n}) }(x) \dd{s} \cleq (1 + \norm{x}{}{4n}). 
	\end{multline*}
	It is now easy to check that for $t > \delta$, 
	\begin{equation*}
		\evx{} \rbr{e^{-(\lambda_p/2)t} t^{-1/2}\rbr{\ddp{X_t}{\tilde{f}} - \evx{}\ddp{X_t}{\tilde{f}}} }^4 \cleq (1 + \norm{x}{}{4n}). 
	\end{equation*}
\end{proof}
We have yet to prove \eqref{eq:sigmafCriticalHermite}. By \eqref{eq:equilibrium} we have $\frac{
\partial }{
\partial x_i } \eq(x) = -2(\mu/\sigma^2) x_i \eq(x)$. Therefore,
\[ \ddp{\frac{
\partial f}{
\partial x_1} }{\eq} = \ddp{f}{ \frac{
\partial \eq}{
\partial x_1} } = \ddp{f}{ -2(\mu/\sigma^2) x_1 \eq } = -2(\mu/\sigma^2) f_{1,0,\ldots,0}. \]
Other coordinates can be treated in the same way, giving (\ref{eq:sigmafCriticalHermite}).

Now we are ready for 
\paragraph{Proof of Theorem \ref{thm:cltCritical}.} In this proof we will us both ideas of the proof of Theorem \ref{thm:clt1} and Theorem \ref{thm:clt2}. We start with the following random vector
\[ Z_1(t):=\rbr{e^{-\lambda_p t} |X_t|, e^{-(\lambda_p/2)t} (|X_t| - e^{\lambda_p t} V_\infty), e^{-(\lambda_p/2)t}t^{-1/2} \ddp{X_t}{\tilde{f}} }. \]
Let $n\in \mathbb{N}$ be fixed later and let us write
\[ Z_1(n t):=\rbr{e^{- n \lambda_p t} \ddp{X_{nt}}{1}, e^{-(n\lambda_p/2)t} (|X_{nt}| - e^{n\lambda_p t} V_\infty) , e^{-(n\lambda_p/2)t}(nt)^{-1/2} \sum_{i=1}^{|X_t|} \ddp{X^{i,t}_{(n-1)t}}{\tilde{f}} }, \]
where $\cbr{X^{i,s}_t}_t$ denotes the subsystem originating from the particle $X_s(i)$. Analogously in the proof of Theorem \ref{thm:clt1} the second term is equal to $ e^{-(n\lambda_p/2)t}\sum_{i=1}^{|X_{nt}|} \rbr{1- V_\infty^i}$, where $V^i_\infty$ are independent copies of $V_\infty$ arising from $i$-th particle. Next, we couple each $X^{i,t}$ with the branching system starting from one particle located at $0$. To this end we use the same methods as in Fact \ref{fact:coupling} for particles movements and retain the branching structure. The coupled system is denoted by $\tilde{X}^{i,t}$. We write
\[ H_t^n:=e^{-(n\lambda_p/2)t}(nt)^{-1/2} \rbr{\sum_{i=1}^{|X_t|} \ddp{X^{i,t}_{(n-1)t}}{\tilde{f}} - \sum_{i=1}^{|X_t|} \ddp{\tilde{X}^{i,t}_{(n-1)t}}{\tilde{f}}} \]
By arguments in Section \ref{sec:approximation} we may assume that $f$ has the second derivative which is bounded by a polynomial. To this end we check that \eqref{eq:assumptionUniform} follows by \eqref{eq:criticalSecondMoments} (and some elementary calculations taking into account the form of the limiting variance). This expression is harder to analyse compared to the case of the small branching rate. Now we will use the methods and notation of the proof of Theorem \ref{thm:clt2}. Recalling that $\lambda_p = 2\mu$ one notices that $H_t^n$ is the same expression as $ (nt)^{-1/2} \rbr{Y_5(t+(n-1)t)+Y_6(t+(n-1)t)}$ when one puts $s=(n-1)t$ (see (\ref{eq:tobeusedagain})). $Y_5$ can be handled in the same way, if only $n>2$. Next, using the coupling properties we decompose $Y_6$ in the following way 
\begin{multline}
	(n t)^{-1/2}Y_6(n t) = (n t)^{-1/2}e^{-(\lambda_p /2) t} \sum_{i=1}^{|X_{t}|} X_t(i) \circ K^i_s =(n t)^{-1/2} e^{-(\lambda_p/2) t} \sum_{i=1}^{|X_{t}|} X_t(i) \circ (K^i_s-k_s) \\+ k_s\circ \rbr{(n t)^{-1/2} e^{-(\lambda_p/2) t} \sum_{i=1}^{|X_{t}|} X_t(i) } ,\label{eq:tmp83} 
\end{multline}
where this time $K^i_s :=e^{-\lambda_p s} \ddp{\grad f}{\tilde{X}_{s}^{i,t}} $ and $k_s := \ev{}_0 K^i_s$ (this is independent of $i$ as $K^i_s$ are i.i.d). $\ev{}_0\norm{K_s^i-k_s}{}{2}$ are uniformly bounded in $s$ which follows in the same way as estimation of $e_t$ in \eqref{eq:LLNL2Convergence}. Using conditioning in a similar manner as in the estimation of $Y_7$ in the proof of Theorem \ref{thm:clt2} we obtain 
\begin{multline*}
	\evx{}\rbr{(n t)^{-1/2} e^{-(\lambda_p/2) t} \sum_{i=1}^{|X_{t}|} X_t(i) \circ (K^i_s-k_s)}^2 = (n t)^{-1}{e^{-\lambda_p t} \evx{} \sum_{i=1}^{|X_{t}|} X_t(i)^T \cov_0{(K^i_s)}X_t(i)}\\
	\cleq (n t)^{-1}{e^{-\lambda_p t} \evx{} \sum_{i=1}^{|X_{t}|} \norm{X_t(i)}{}{2}} \cleq (n t)^{-1} (1+\norm{x}{}{n}) \rightarrow 0, 
\end{multline*}
where again we used \eqref{eq:moments-formula} and \eqref{eq:moments}. Further, we notice also that $\ddp{x}{\eq} = 0$ therefore by Fact \ref{fact:momentsCritical} and the fact that $k_s$ is bounded we know that the second moment of the second term of \eqref{eq:tmp83} is bounded by $c/(2\sqrt{n})$, where $c>0$ is a certain constant. Thus by \eqref{eq:weakL2estimation}, we may conclude that for any $n$ there exists $t_n$ such that for any $t\geq t_n$, 
\begin{align}
	\label{eq:H_tn} \|H_t^n\|_2 \le 2c/\sqrt{n}. 
\end{align}

We recall that $\tilde{X}^{i,t}$'s are i.i.d. branching particle systems. Let us denote 
\begin{equation}
	Z_t^i := e^{-((n-1)\lambda_p/2) t }(n t)^{-1/2} \ddp{\tilde{X}^{i,t}_{(n-1)t}}{\tilde{f}} = \rbr{\frac{n-1}{n}}^{1/2}\rbr{ e^{-((n-1)\lambda_p/2) t }((n-1) t)^{-1/2} \ddp{\tilde{X}^{i,t}_{(n-1)t}}{\tilde{f}}} \label{eq:newZZ} 
\end{equation}
and $z_t := \ev{}_0 Z_t^i$. By Lemma \ref{lem:decay} one checks that $|z_t| = e^{((n-1)\lambda_p/2) t }(n t)^{-1/2} |\T{t} \tilde{f}(0)|\cleq e^{((n-1)(\lambda_p/2 - 2\mu)) t }$. Therefore for $n>2$ we have
\[ \ev{} e^{-(\lambda_p/2)t} \sum_{i=1}^{|X_t|} |z^i_t| = e^{(\lambda_p/2)t} |\ev{}_0 Z_t^1| \cleq e^{(\lambda_p/2)t}e^{((n-1)(\lambda_p/2 - 2\mu)) t } \rightarrow 0, \quad \text{as } t\rightarrow +\infty. \]
Using the facts above we conclude that for any fixed $n> 2$, the expression $Z_1$ is asymptotically equivalent to
\[ Z^n_2(t) = \rbr{e^{-\lambda_p t} |X_t|, e^{-(n\lambda_p/2)t}\sum_{i=1}^{|X_{nt}|} \rbr{1- V_\infty^i}, H_t^n + e^{-(\lambda_p/2)t} \sum_{i=1}^{|X_t|} (Z^i_t - z^i_t) }. \]
This expression differs from the analogous in the proof of Theorem \ref{thm:clt1} only by additional term $H_t^n$. Let us skip it for a moment and write
\[ \tilde{Z}^n_2(t) = \rbr{e^{-\lambda_p t} |X_t|, e^{-(n\lambda_p/2)t} \sum_{i=1}^{|X_{nt}|} \rbr{1- V_\infty^i}, e^{-(\lambda_p/2)t} \sum_{i=1}^{|X_t|} (Z^i_t - z^i_t) }. \]
Returning to the flow of the proof of Theorem \ref{thm:clt1} one can prove that
\[ \tilde{Z}^n_2(t) \rightarrow^d \rbr{W, \sqrt{W} G_1, \rbr{\frac{n-1}{n}}^{1/2} \sqrt{W} G_2}, \]
conditionally on $Ext^c$. In order to achieve it one simply uses Fact \ref{fact:momentsCritical} instead of Fact \ref{fact:momentsSubcritical}. The additional term $\rbr{\frac{n-1}{n}}^{1/2}$ originates from the same term in \eqref{eq:newZZ}.

To finish the proof let us denote the law of the triple in the last expression by $\mathcal{L}_n$ and the one of $(W, \sqrt{W}G_1, \sqrt{W}G_2)$ by $\mathcal{L}_\infty$. For any $\epsilon>0$, there exists $n$ such that $m(\mathcal{L}_n,\mathcal{L}_\infty) \leq \epsilon$ and $c/\sqrt{n} \le \varepsilon$, where $c$ is the constant in \eqref{eq:H_tn}. Now we choose $T$ large enough to have $m(\mathcal{L}(\tilde{Z}^n_2(t)), \mathcal{L}_n) \leq \epsilon$, $m(\mathcal{L}({Z}^n_2(t)), \tilde{Z}^n_2(t))) \leq \epsilon$ and $m(\mathcal{L}({Z}^n_2(t)), \mathcal{L}({Z}_1(nt))) \leq \epsilon$ for any $t\geq T$. By applying the triangle inequality we get $m(\mathcal{L}, \mathcal{L}({Z}_1(nt))) \leq 4\epsilon$ for any $t\geq T$. The proof is concluded by obvious transformations. 
\section*{Appendix} 

\label{sec:appendix}
\begin{proof}
	[Proof of Fact \ref{fact:aaa}] Using \eqref{eq:laplace-aux} with $f(x) = 1 $ we get (we drop argument $x$)
	\[ \frac{d}{dt} w(t,\theta) = (\lambda p) w(t,\theta)^2-\lambda w(t,\theta) + \lambda (1-p),\quad w(0,\theta) = e^{-\theta}. \]
	The solution of this equation is (see e.g. \cite[Section III.5]{Athreya:2004xr})
	\[ w(t,\theta) = \frac{\lambda(1-p)(e^{-\theta} -1) - e^{-\lambda_p t}(\lambda p e^{-\theta} - \lambda(1-p)) }{\lambda p (e^{-\theta} -1) - e^{-\lambda_p t}(\lambda p e^{-\theta} - \lambda(1-p)) }. \]
	Now we want to investigate convergence of $e^{-\lambda_p t}|X_t|$. Its Laplace transform is 
	\begin{equation}
		L(t,\theta):=\frac{\lambda(1-p)(e^{-\theta e^{-\lambda_p t}} -1) - e^{-\lambda_p t}(\lambda p e^{-\theta e^{-\lambda_p t}} - \lambda(1-p)) }{\lambda p (e^{-\theta e^{-\lambda_p t}} -1) - e^{-\lambda_p t}(\lambda p e^{-\theta e^{-\lambda_p t}} - \lambda(1-p)) }. \label{eq:laplaceNumber} 
	\end{equation}
	Using the first order Taylor expansion and dropping terms of lower order we get 
	\begin{align*}
		L(t,\theta) &= \frac{\lambda(1-p)\rbr{ -\theta e^{-\lambda_p t} + o(\theta e^{-\lambda_p t}) } - e^{-\lambda_p t} \rbr{ -\lambda p \theta e^{-\lambda_p t} + o(e^{-\lambda_p t}) +\lambda_p } }{\lambda p \rbr{ -\theta e^{-\lambda_p t} + o(\theta e^{-\lambda_p t}) } - e^{-\lambda_p t}\rbr{ -\lambda p \theta e^{-\lambda_p t} + o(e^{-\lambda_p t}) +\lambda_p } }\\
		& \approx \frac{-\lambda(1-p)\theta e^{-\lambda_p t} - \lambda_p e^{-\lambda_p t} }{-\lambda p \theta e^{-\lambda_p t} -\lambda_p e^{-\lambda_p t} }. 
	\end{align*}
	Therefore
	\[ L(t,\theta) \rightarrow \frac{\lambda(1-p)\theta + \lambda_p }{\lambda p \theta +\lambda_p } = \frac{\theta-p (-2+\theta ) -1}{+p (2+\theta )-1} =:L(\theta) \:\:\text{ as }\:\: t\rightarrow +\infty. \]
	Taking $\theta \rightarrow +\infty$ it is easy to check that $\pr{V_\infty = 0 } = p_e$ (we recall that $p_e=\frac{1-p}{p}$), therefore $V_\infty>0$ on the set of non-extinction $Ext^c$. Let us now calculate the law of $V_\infty$ on the set of non-extinction
	\[ L(\theta) = \ev{} e^{-\theta V_\infty} = \ev{} e^{-\theta V_\infty} 1_{Ext} + \ev{} e^{-\theta V_\infty} 1_{Ext^c} = p_e + (1-p_e) \ev{} \rbr{e^{-\theta V_\infty}| Ext^c}. \]
	Therefore
	
	\[ \ev{} \rbr{e^{-\theta V_\infty}| Ext^c} = \frac{ \lambda_p }{\lambda p \theta +\lambda_p } = \frac{ (2p-1) }{p \theta + (2p-1) }. \]
	Further
	\[ \ev{}|X_t|^4 = \frac{
	\partial^4 w(t,0)}{
	\partial \theta^4} = \frac{e^{t \lambda _p} \left(-1+2 \left(-4+7 e^{t \lambda _p}\right) p+\left(8+16 e^{t \lambda _p}-36 e^{2 t \lambda _p}\right) p^2+8 e^{t \lambda _p} \left(-2+3 e^{2 t \lambda _p}\right) p^3\right)}{(-1+2 p)^3}. \]
	Now the second part of the fact follows. In order to prove that all moments are finite we notice that derivatives of \eqref{eq:laplaceNumber} are of the form
	\[ \frac{\dd{^n} }{\dd{\theta^n} } L(t,\theta) = \frac{ l(t,\theta) }{ (\lambda p (e^{-\theta} -1) - e^{-\lambda_p t}(\lambda p e^{-\theta} - \lambda(1-p)))^{2n} }, \]
	where $l(t,\theta)$ is a certain expression. Obviously the denominator is finite for $\theta = 0$ hence the proof is concluded by the properties of the Laplace transform (e.g. \cite[Chapter XIII.2]{Feller:1971cr}). 
\end{proof}

\subsubsection*{Acknowledgments} We would like to thank Dr Simon Harris for introducing to the topic and drawing our attention to expansion described in Remark \ref{rem:martingales}. 
\bibliographystyle{abbrv} 
\bibliography{branching} \end{document}